\theoremstyle{plain}
\newtheorem{theorem}{Theorem}[section]
\newtheorem{lemma}[theorem]{Lemma}
\theoremstyle{remark}
\newtheorem{example}{Example}
\newtheorem{remark}[theorem]{Remark}
 \newcommand{\Ll}{\langle}
 \newcommand{\Rr}{\rangle}
 \newcommand{\Z}{\mathbb{Z}}
\newcommand{\Be}{\begin{equation}}
\newcommand{\Ees}{\end{equation*}}
\newcommand{\Bes}{\begin{equation*}}
\newcommand{\Ee}{\end{equation}}
\newcommand{\R}{\mathbb{R}}
\newcommand{\E}{\mathbb{E}}
\newcommand{\mcl}{\mathcal}
\newcommand{\dif}{\mathrm{d}}
\begin{document}

\begin{frontmatter}
\title{A probability approximation framework: Markov process approach}
\runtitle{An approximation method}

\begin{aug}
\author[P. Chen]{\fnms{Peng}~\snm{Chen}\ead[label=e1]{chenpengmath@nuaa.edu.cn}},
\author[Q. M. Shao]{\fnms{Qi-Man}~\snm{Shao}\ead[label=e2]{shaoqm@sustech.edu.cn}}
\and
\author[L. Xu]{\fnms{Lihu}~\snm{Xu}\ead[label=e3]{lihuxu@um.edu.mo}}
\address[P. Chen]{College of Mathematics, Nanjing University of Aeronautics and Astronautics, Nanjing 211106, China\printead[presep={,\ }]{e1}}

\address[Q. M. Shao]{Department of Statistics and Data Science, SICM, NCAMS, Southern University of Science and Technology\printead[presep={,\ }]{e2}}

\address[L. Xu]{Department of Mathematics,
Faculty of Science and Technology,
University of Macau,
Av. Padre Tom\'{a}s Pereira, Taipa
Macau, China and UM Zhuhai Research Institute, Zhuhai, China.\printead[presep={,\ }]{e3}}
\end{aug}

\begin{abstract}
We view the classical Lindeberg principle in a Markov process setting to establish a probability
approximation framework by the associated It\^{o}'s formula and Markov operator.
As applications, we study the error bounds of the following three approximations: approximating a family of online stochastic gradient descents (SGDs) by a stochastic
differential equation (SDE) driven by multiplicative Brownian motion, Euler-Maruyama (EM) discretization for multi-dimensional Ornstein-Uhlenbeck stable process, and multivariate normal approximation. All these error bounds are in Wasserstein-1 distance.
\end{abstract}

\begin{keyword}[class=MSC]
\kwd[Primary ]{60F05}
\kwd{60H07}
\kwd[; secondary ]{60J20}
\end{keyword}

\begin{keyword}
\kwd{Probability approximation}
\kwd{Markov process}
\kwd{It\^{o}'s formula}
\kwd{Online stochastic gradient descent}
\kwd{Stochastic differential equation}
\kwd{Euler-Maruyama (EM) discretization}
\kwd{stable process}
\kwd{Normal approximation}
\kwd{Wasserstein-1 distance}
\end{keyword}

\end{frontmatter}

\section{Introduction}

Lindeberg principle provides an elegant proof for the classical central limit theorem of the sum of independent
random variables \cite{Lin22}, it has been extensively applied to many research problems, see
\cite{Cha06,KM11,TV11,CCK14,BMP15,MP16,CCK17,CSZ17,KY17,WAP17,BCP18,CX19} and the references therein. In this paper, we
shall view the classical Lindeberg principle in a Markov process setting, and use the well developed tools in
stochastic analysis, such as It\^{o}'s formula and infinitesimal generator, to establish a probability approximation framework.

In order to interpret our method, we first briefly recall the classical Lindeberg principle by the following
example.  Let $(\xi_{i})_{i \ge 1}$ be a sequence of independent and identically distributed (i.i.d.) $\mathbb{R}$-valued
random variables with $\mathbb{E} \xi_{i}=0$, $\mathbb{E} \xi^{2}_{i}=1$ and $\mathbb{E}|\xi_{i}|^{3}<\infty$. Let $(\zeta_{i})_{i \ge 1}$
be a sequence of independent standard normal distributed random variables and it is well known that
$\frac{\zeta_{1}+\cdots+\zeta_{n}}{\sqrt{n}}$ is a standard normal distributed random variable for any $n \in \mathbb{N}$.
Write $\xi_{n,i}=\frac{\xi_{i}}{\sqrt n}$, $\zeta_{n,i}=\frac{\zeta_{i}}{\sqrt n}$, and denote
$$X_{n}=\zeta_{n,1}+...+\zeta_{n,n}, \ \ \ \ \ \ \ Y_{n}=\xi_{n,1}+...+\xi_{n,n}.$$  Further denote $Z_{0}=X_{n}$
and
$Z_{i}=Z_{i-1}-\zeta_{n,i}+\xi_{n,i}$ for $i \ge 1$, we easily see that $Z_{i}$ is obtained by swapping
$\zeta_{n,i}$ in $Z_{i-1}$ with $\xi_{n,i}$.
For any bounded 3rd order differentiable function $h$, we have
\begin{eqnarray} \label{e:Lindeberg0}
|\mathbb{E}[h(X_{n})]-\mathbb{E}[h(Y_{n})]| & \le & \sum_{i=1}^{n} \left|\mathbb{E}[h(Z_{i-1})]-\mathbb{E}[h(Z_{i})]\right| \\
&\le & C n^{-3/2}\|h'''\|\sum_{i=1}^{n} [\mathbb{E} |\xi_{i}|^{3}+\mathbb{E} |\zeta_{i}|^{3}] \le C n^{-1/2} \|h'''\|, \nonumber
\end{eqnarray}
where $\|.\|$ is the uniform norm of continuous function and the second inequality is obtained by a 3rd order
Taylor expansion.

Let us now explain the Lindeberg's proof from a perspective of Markov process and view the above swap trick as a
comparison of two Markov processes. Denote $X_{0}=0, X_{i}=\zeta_{n,1}+...+\zeta_{n,i}$ and $Y_{0}=0,
Y_{i}=\xi_{n,1}+...+\xi_{n,i}$ for $i \ge 1$, it is clear that $(X_{i})_{0 \le i \le n}$ and $(Y_{i})_{0 \le i \le
n}$ are both Markov processes. Formally, let $j \ge i \ge 1$, denote by $X_{j}(i,x)$ the random variable $X_{j}$ given
$X_{i}=x \in\mathbb{R}$, i.e.,
$X_{j}(i,x)=x+\zeta_{n,i+1}+...+\zeta_{n,j}$, it is obvious $X_{j}(i,X_{i})=X_{j}$ for $j \ge i$. Similarly, we
define $Y_{j}(i,y)$ for $j \ge i$.
It is easy to see that $Z_{j}=X_{n}(j, Y_{j})=X_{n}(j, Y_{j}(j-1,Y_{j-1}))$ and
$Z_{j-1}=X_{n}(j-1,Y_{j-1})=X_{n}(j,X_{j}(j-1,Y_{j-1}))$ for each $1 \le j \le n$, thus
\begin{equation*}
\begin{split}
&|\mathbb{E}[h(X_{n})]-\mathbb{E}[h(Y_{n})]|\\
& \le \sum_{j=1}^{n} \left|\mathbb{E}[h(Z_{j-1})]-\mathbb{E}[h(Z_{j})]\right| \\
&=\sum_{j=1}^{n} \left|\mathbb{E}[h(X_{n}(j,X_{j}(j-1,Y_{j-1}))]-\mathbb{E}[h(X_{n}(j,Y_{j}(j-1,Y_{j-1})))]\right|.
\end{split}
\end{equation*}
A rigorous proof will be given in Section \ref{mainchapter} below with the help of the Chapman-Kolmogorov equation and the time homogeneity. Notice that
$\mathbb{E}[h(X_{n}(j,X_{j}(j-1,Y_{j-1}))]$ and $\mathbb{E}[h(X_{n}(j,Y_{j}(j-1,Y_{j-1})))]$ are the functions of
$X_{j}(j-1,Y_{j-1})$ and $Y_{j}(j-1,Y_{j-1})$, respectively, and compare these two new functions
rather than directly compute $|\mathbb{E}[h(Z_{i-1})]-\mathbb{E}[h(Z_{i})]|$ in Lindeberg principle. Because $(X_{i})_{0 \le i \le
n}$ can be embedded into a Brownian motion $(B_t)_{0 \le t \le 1}$ which has a smoothening effect, we expect that
It\^{o}'s formula and the semigroup theory of Brownian motion will make $\mathbb{E}[h(X_{n}(j,X_{j}(j-1,Y_{j-1}))]$ and
$\mathbb{E}[h(X_{n}(j,Y_{j}(j-1,Y_{j-1})))]$ have better regularity than $h$, see more details in {Subsection \ref{proofCLT}}. Since
the above procedure only depends on Markov property, this perspective of viewing Lindeberg principle can be
extended to other Markov processes.

The novelty of this paper is the following two aspects. (1) We view the procedure of the classical Lindeberg
principle as a special Markov process and extend this point of view to general \emph{Markov process} setting, using
\emph{It\^{o}'s formula} of Markov process and \emph{Markov operator} (see, e.g.,
\cite{Gar85,BK02,EK09,KP13,Oks13}), we establish a probability approximation framework. Chatterjee \cite{Cha06} extended Lindeberg principle to a family of dependent random variables, and established a general approximation error bound from which he identified the limiting spectral
distribution of Wigner matrices with exchangeable entries. It is obvious to see from the SGD approximation below that our approximation framework also works for dependent random variables. (2) We apply our
framework to three applications in the classical Wasserstein-1 distance: approximating online
stochastic gradient descent (SGD) in machine learning by a stochastic differential equation (SDE), bounding the error between a SDE with $\alpha$-stable noise and its Euler-Maruyama (EM) discretization, and normal approximation.

For the first application, there have been many results on approximating SGD by a SDE, see for instance \cite{TTV16,LTW17,AN19,FGLLL19,HLLL19,LTW19,BS20,FDBD20} and the references therein. To the best of our knowledge, most of the known approximation results are about the error bounds over a family of test functions with bounded high order derivatives,
from which it is not easy to obtain an approximation error bound in a probability metric.
By restricting an SGD in a neighborhood of a local minimum and solving a Kolmogorov backward equation,
Feng et al. \cite{FGLLL19} studied locally approximating the SGD before it jumps to another minimum. When their test functions have bounded $k$-th order
derivatives with $k=0,...,4$, the error bound is of order $\eta$ ($\eta$ is the learning rate), whereas the bound is improved to be of order $\eta^2$ as the test functions additionally have bounded $5$-th and $6$-th order derivatives. In \cite{LTW19}, Li et al. proposed stochastic modified equation, a SDE with multiplicative noise, to approximate SGD, their approximation error is defined through a family of test functions which has high order derivatives and a certain growth condition. If the test function is Lipschitz function family, it seems to us that their result cannot provide a convergence rate, {see \cite[Definition 1 and Theorem 3]{LTW19}}.
In contrast, we will use our framework to get an explicit error bound between SGD and the associated SDE in
the classical Wasserstein-1 distance, where our test function family is Lipschitz.

In the second application, we consider the EM discretization of $\alpha$-stable Ornstein-Uhlenbeck (OU) process with a constant step size $\eta$, which leads to a heavy tailed AR(1) time series without second moment, {\cite{Cli83,DKL92,Res97,CLW12}.} Using our framework, we establish an error bound of EM discretization and obtain a rate $\eta^{\frac{2-\alpha}{\alpha}}$ for $\alpha \in (1,2)$. It seems that there are not many results about EM discretization for SDE with $\alpha$-stable noise, see \cite{JMW96,WY07,TA18,Liu19}, most of them are about bounding
strong approximations in a finite time interval. As the time tends to $\infty$, these bounds blow up. The bound that we obtained is uniform with respect to the time, this means that our bound still holds true even the time tends to $\infty$. Note that the discrete AR(1) time series are not independent.

The third application is normal approximations, which have recently been intensively
studied by Stein's method, see for instance, \cite{CM08,RR09,CGS10,VV10,FSX19,Son20} and the references therein. In particular, Chatterjee and Meckes used an exchangeable pair method to obtain a bound for multivariate normal approximation in \cite{CM08}, because the test functions in \cite{CM08} have bounded first and second derivatives, their bound cannot derive a convergence in Wasserstein-1 distance. By a direct calculation under our framework, we get a $\frac{1}{\sqrt n}$ convergence rate up to $\log n$
correction for multivariate normal approximation, which was established in \cite{VV10,FSX19}. For Stein's method, we refer the reader to
\cite{Xu19,CNX20,CNXYZ19,CNXY19,BU20} for stable approximation,\cite{Che75,BH84,Bar88,AGG90,BHJ92} for Poisson
approximation and
\cite{Bar90,BB92,BCL92,Loh92,NP09,Gur14,GM15,BD17,BDF17,Kas17,AH19} for other approximations.

Besides the applications to online SGD, EM discretization and normal approximation addressed in this paper, we hope that our new method
can also be applied to many other probability approximations, e.g., diffusion approximation with constant step size
and so on. We will study these research problems in the future paper.

In this paper, we focus on the approximation problems in Wasserstein-1 distance. However, it is clear to see
from Theorem \ref{mainthm} that our approximation method also works for {\bf other} metrics. For instance, if we
consider bound measurable function $h$, then the approximation will be in total variation metric.

The organization of this paper is as follows. We shall introduce our probability approximation framework and main theorem in next section. In Section \ref{application}, we will give
the results about the three applications to SGD, EM discretization and normal approximation, where proofs
are given in {Subsections} \ref{proofsgd}-\ref{proofCLT}, respectively. Appendixes \ref{s:A1} and \ref{Mal} are devoted to
proving some  auxiliary lemmas about the first application, while Appendixes \ref{stable application} and
\ref{normal spplication} provide the proofs of auxiliary lemmas about the second and third applications,
respectively.
\\

{\bf Notations.} We end this section by introducing some notations, which will be frequently used in sequel. The
inner product of $x,y\in\mathbb{R}^{d}$ is denoted by $\langle x,y\rangle$ and the Euclidean metric is denoted by
$|x|$.

Let $\mu$ and $\nu$ be two probability distributions on $\mathbb{R}^{d}$, their \emph{Wasserstein-1 distance} is defined as
$$d_W(\mu,\nu)=\sup_{h\in {\rm Lip(1)}} \big|\mathbb{E}[h(X)]-\mathbb{E}[h(Y)]\big|,$$
where ${\rm Lip(1)}:=\{h:\mathbb{R}^{d}\rightarrow\mathbb{R}: |h(x)-h(y)| \le {|x-y|}\}$, $X$ and $Y$ are two random
variables with distributions $\mu$ and $\nu$, respectively.

For a random variable $X$, we denote by $\mathcal{L}(X)$ its probability law. In addition, for any d-dimensional random
vectors $\xi_{1},\xi_{2},$ we call $\xi_{1}\stackrel{\rm d}{=}\xi_{2}$ if for any $A\in\mathcal{B}(E),$ the Borel
set of $E$, we have
\begin{align*}
\mathbb{P}(\xi_{1}\in A)=\mathbb{P}(\xi_{2}\in A).
\end{align*}

Let $\mathcal{C}(\mathbb{R}^{d},\mathbb{R})$ denote the collection of all continuous functions
$f:\mathbb{R}^{d}\rightarrow\mathbb{R}$ and $\mathcal{C}^{k}(\mathbb{R}^{d},\mathbb{R})$, $k\geq1$,  denote the
collection of all $k$-th order continuously differentiable functions. For $f \in \mathcal{C}^3(\mathbb{R}^d,\mathbb{R})$ and $v, v_1, v_2,v_{3},x \in\mathbb{R}^d$, the directional derivative $\nabla_v f(x)$, $\nabla_{v_2}\nabla_{v_1} f(x)$ and $\nabla_{v_{3}}\nabla_{v_2}\nabla_{v_1} f(x)$ are  defined by
\ \ \
\begin{equation*}
\begin{split}
& \nabla_v f(x)\ = \ \lim_{\varepsilon\rightarrow 0} \frac{f(x+\varepsilon v)-f(x)}{\varepsilon}, \\
\end{split}
\end{equation*}
\begin{equation*}
\begin{split}
&\nabla_{v_2} \nabla_{v_1} f(x)\ = \ \lim_{\varepsilon\rightarrow0} \frac{\nabla_{v_1}f(x+\varepsilon v_2)-\nabla_{v_1}f(x)}{\varepsilon},
\end{split}
\end{equation*}
\begin{equation*}
\begin{split}
&\nabla_{v_3}\nabla_{v_2} \nabla_{v_1} f(x)\ = \ \lim_{\varepsilon\rightarrow 0} \frac{\nabla_{v_2}\nabla_{v_1}f(x+\varepsilon v_3)-\nabla_{v_2}\nabla_{v_1}f(x)}{\varepsilon},
\end{split}
\end{equation*}
respectively. Let $\nabla f(x)\in\mathbb{R}^d$ and $\nabla^2 f(x)\in\mathbb{R}^{d \times d}$ denote the gradient and the Hessian of $f$, respectively.
It is known that
$\nabla_v f(x) =\langle \nabla f(x), v\rangle$ and
$\nabla_{v_2} \nabla_{v_1} f(x)=\langle \nabla^2 f(x), v_1 v^{\rm T}_2\rangle_{{\rm HS}}$, where ${\rm T}$ is the transpose operator and $\langle A, B\rangle_{{\rm HS}}:=\sum_{i,j=1}^d A_{ij} B_{ij}$ for $A, B \in\mathbb{R}^{d \times d}$. We define the operator norm of $\nabla^{2} f(x)$ by
\begin{align*}
\|\nabla^{2} f(x)\|_{\rm op}=& \sup_{|v_{1}|,|v_{2}|=1} |\nabla_{v_{2}} \nabla_{v_{1}} f(x)|, \qquad \|\nabla^{2} f\|_{{\rm op}, \infty}=& \sup_{x \in \mathbb{R}^{d}} \|\nabla^{2} f(x)\|_{\rm op}.
\end{align*}
We often drop the subscript "{\rm op}" in the definitions above and simply write $\|\nabla^{2} f(x)\|=\|\nabla^{2} f(x)\|_{\rm op}$ and  $\|\nabla^{2} f(x)\|_{\infty}=\|\nabla^{2} f(x)\|_{{\rm op}, \infty}$ if there is no confusion. Similarly we define
\begin{align*}
\|\nabla^{3} f(x)\|_{\rm op}=& \sup_{|v_{i}|=1, i=1,2,3} |\nabla_{v_{3}} \nabla_{v_{2}} \nabla_{v_{1}} f(x)|
\end{align*}
and $\|\nabla^{3} f\|_{{\rm op}, \infty}$ and the short-hand notations $\|\nabla^{3} f(x)\|$ and $\|\nabla^{3} f\|_{\infty}$.

Given a matrix $A \in\mathbb{R}^{d \times d}$, its Hilbert-Schmidt norm is
$\|A\|_{{\rm HS}} \ = \ \sqrt{\sum_{i,j=1}^{d} A^{2}_{ij}} \ = \ \sqrt{{\rm Tr} (A^{\rm T} A)}$ and its operator norm is
$\|A\|_{\rm op} \ = \ \sup_{|v|=1}|A v|$. We have the following relations:
\begin{equation}\label{op}
\|A\|_{\rm op} \ = \ \sup_{|v_{1}|,|v_{2}|=1} |\langle A, v_{1} v^{\rm T}_{2}\rangle_{\text{HS}}|, \ \ \ \ \ \ \ \ \ \|A\|_{\rm op} \ \le \ \|A\|_{{\rm HS}} \ \le \ \sqrt d \|A\|_{\rm op}.
\end{equation}

Moreover, $\mathcal{C}_{b}(\mathbb{R}^{d_{1}},\mathbb{R}^{d_{2}})$ with $d_{1},d_{2}\in\mathbb{N}$ denotes the set of all bounded continuous
functions from $\mathbb{R}^{d_{1}}$ to $\mathbb{R}^{d_{2}}$ with the supremum norm defined by
\begin{align*}
\|f\|=\sup_{x\in\mathbb{R}^{d_{1}}}|f(x)|.
\end{align*}
Denote by $C_{p_{1},\cdots,p_{k}}$ some positive number depending on $k$ parameters, $p_{1},\cdots,p_{k},$ whose
exact values can vary from line to line.

\section{The framework and main theorem}\label{mainchapter}
Let $E$ be a Polish space.
Let $(X_t)_{t \ge 0}$ be a continuous time
homogeneous $E$-valued {Markov} process, and let $(Y_k)_{k \in\mathbb{Z}_+}$ be a discrete time homogeneous $E$-valued
{Markov} process (note $\mathbb{Z}_+=\{0\}\cup \mathbb{N}$).
If $X_0=x \in \mathbb{R}^{d}$, we denote the Markov process $(X_{t})_{t \ge 0}$ by $(X^x_t)_{t \ge 0}$ to stress it starts from $x \in E$.
Similarly for the notation $(Y^{y}_k)_{k \in\mathbb{Z}_+}$ for $y \in E$.

Notice that the process $(X_t)_{t \ge 0}$ is a time homogeneous $E$-valued {Markov} process, for any $0\leq s\leq t<\infty$, $x\in E$ and $B\in\mathcal{B}(E)$, the Borel sets of $E$, denote the transition probability function of $(X_t)_{t \ge 0}$ by $P_{s,t}(x,B)$, that is,
\begin{align}\label{Xtran}
P_{s,t}(x,B)=\mathbb{P}\left(X_{t}\in B|X_{s}=x\right).
\end{align}
Then $P_{s,t}(x,B)$ satisfies the following properties (see, e.g., \cite[Section 10 of Chapter 2]{Sat99}):

(1) it is a probability measure as a mapping of $B$ for any fixed $x$;

(2) it is measurable in $x$ for any fixed $B$;

(3) $P_{s,s}(x,B)=\delta_{x}(B)$ for $s\geq0$, that is, when $x\in B$, $\delta_{x}(B)=1$, otherwise $\delta_{x}(B)=0$;

(4) (Chapman-Kolmogorov equation) it satisfies
\begin{align}\label{XC-K}
\int_{E}P_{s,u}(x,dy)P_{u,t}(y,B)=P_{s,t}(x,B) \quad {\rm for}\quad 0\leq s\leq u\leq t;
\end{align}

(5) (Time homogeneity)$P_{s+h,t+h}(x,B)$ does not depend on $h$.

According to the time homogeneity (5), we have
\begin{align*}
P_{t}(x,B)=P_{s,s+t}(x,B) \quad {\rm for}\quad s\geq0,
\end{align*}
then the Chapman-Kolmogorov equation (4) can be written as
\begin{align}\label{YC-K}
\int_{E}P_{s}(x,dy)P_{t}(y,B)=P_{s+t}(x,B), \quad {\rm for}\quad s\geq0, t\geq0.
\end{align}
For the process $(Y_k)_{k \in\mathbb{Z}_+}$, for any $y\in E$, $B\in\mathcal{B}(E)$, $i,j\in\mathbb{Z}^{+}$ and $i\leq j$, we denote the corresponding transition probability function by $Q_{i,j}(y,B)$, that is,
\begin{align}\label{Ytran}
Q_{i,j}(y,B)=\mathbb{P}\left(Y_{j}\in B|Y_{i}=x\right).
\end{align}
According to the time homogeneity, we denote
\begin{align*}
Q_{j}(y,B)=Q_{i,i+j}(y,B)\quad {\rm for}\quad i\in\mathbb{Z}^{+}.
\end{align*}

For $(X_{t})_{t\geq 0}$, its infinitesimal generator is defined as
\begin{align*}
\mathcal{A}^{X}f(x):=\lim_{t\rightarrow 0}\frac{\mathbb{E}f(X_{t}^{ x})-f(x)}{t}, \ \ \ \ f \in\mathcal{D}(\mathcal{A}^{X}),
\end{align*}
where $\mathcal{D}(\mathcal{A}^{X})$ is the domain of the operator $\mathcal{A}^{X}$, whose exact form varies according to the concrete applications at hand.  For $(Y_k)_{k \in\mathbb{Z}_+}$, its infinitesimal generator is defined as
\begin{align*}
\mathcal{A}^{Y}f(y):=\mathbb{E}[f(Y_{1}^{y})-f(y)], \ \ \ \ f \in\mathcal{D}(\mathcal{A}^{Y}),
\end{align*}
where $\mathcal{D}(\mathcal{A}^{Y})$ is the domain of the operator $\mathcal{A}^{X}$, whose exact form varies according to the concrete applications at hand.

In order to avoid entering the semigroup theory in which we need to figure out function spaces and operator domains to justify our method, we use the concept of full generator family, which is usually easy to be verified by It\^{o}'s formula in practice.  More precisely, for a function $f$, we call $(f, \mathcal{A}^X f)$ belongs to the full generator family of $X_t$ if
\begin{align}\label{Ito}
\mathcal{M}_t:=f(X^{x}_{t})-f(x)-&\int_0^t\mathcal{A}^X f(X^{x}_{s}) \dif s, \quad \quad \quad \ \ t>0,
\end{align}
is a martingale with $\mathbb{E}[\mathcal{M}_{t}]=0$ for all $t\geq0$, see \cite[Chapter 4]{EK09} for more details. In practice, it is easy for us to verify that a function belongs to a full generator family by It\^{o}'s formula. For a more thorough discussion on the infinitesimal generators and
It$\hat{o}$'s formula, we refer the reader to \cite[Chapter 1 and Chapter 4]{EK09}, \cite[Chapter
IX]{Yos88}, \cite[Chapter 4]{Oks13} and the references therein.

Our first main result is a framework of comparing the distributions of $(X_t)_{t \ge 0}$ and $(Y_k)_{k \in\mathbb{Z}_+}$,
which can be fitted into many probability approximations arising in concrete applications. The key ingredients of
the proof are Markov semigroup, (\ref{Ito}) and infinitesimal generator in stochastic analysis.
\begin{theorem}\label{mainthm}
	Let $N\geq2$ be a natural number and let $h: E \rightarrow \R$ be a measurable function such that:
	(1). $\E |h(X^x_t)|<\infty$ and $\E |h(Y^y_k)|<\infty$ for all $x \in E, y \in E$, $t \le N$ and $k \le N$;
	(2). the function $u_k(x):=\E h(X^x_k)$ for $k \ge 1$ satisfies
	$\E |\mcl A^X u_k(Y_j)|<\infty$ and $\E |\mcl A^X u_k(X_t^{Y_j})|<\infty$ for all $1 \le j, k \le N$ and $0 \le
t \le 1$; (3). $(u_k, \mcl A^X u_k)$ belongs to the full generator family of $X_t$ for $1 \le k \le N$.
Then, for any $x\in E$,
\begin{eqnarray}\label{orign}
 \mathbb{E}h(X_{N}^{x})-\mathbb{E}h(Y_{N}^{x}) &=&\sum_{j=1}^{N}\left[\E u_{N-j}(X^{Y_{j-1}}_{1})-\E u_{N-j}(Y_{1}^{Y_{j-1}})\right],
 \end{eqnarray}
where, for simplicity, we write $Y_{j-1}=Y^x_{j-1}$ for all j.
In practice, we often rewrite (\ref{orign}) in the following form and then estimate each term on its right hand:
\begin{eqnarray}  \label{e:mainnew}
\mathbb{E}h(X^x_{N})-\mathbb{E}h(Y^x_{N}) &=&\mcl{I}_h+\mcl{II}_h,
 \end{eqnarray}
 where
 $$\mcl I_h=\sum_{j=1}^{N-1}
 \int_{0}^{1}\big[\mathbb{E}\mathcal{A}^{X}u_{N-j}(X_{s}^{Y_{j-1}})-\mathbb{E}\mathcal{A}^{Y}u_{N-j}(Y_{j-1})\big]\dif s,$$
 $$\mcl{II}_h=\mathbb{E}\big[h\big(X_{1}^{Y_{N-1}}\big)-h(Y_{N-1})\big]+\mathbb{E}\big[h(Y_{N})-h(Y_{N-1})\big].$$
In particular,
\begin{eqnarray}  \label{e:mainnew-1}
 d_W(\mcl L(X_{N}),\mcl L(Y_{N})) &\le &\sup_{h\in{\rm Lip(1)}} (|\mcl{I}_h|+|\mcl{II}_h|).
 \end{eqnarray}
\end{theorem}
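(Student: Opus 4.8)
The plan is to first establish the exact identity (\ref{orign}) by a Markov-process version of the Lindeberg swap, and then to convert it into (\ref{e:mainnew}) by using It\^o's formula (the full generator family property) to express the one-step increments of $X$ along the trajectory of $Y$ as time-integrals of $\mcl A^X u_{N-j}$. Throughout, $X^{Y_{j-1}}_t$ denotes a copy of $X$ started from the random point $Y_{j-1}$ and independent of the process $(Y_k)$, and similarly for $Y^{Y_{j-1}}_1$.

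First I would set up the swap. With $u_0=h$, consider for $0\le j\le N$ the quantity $\E u_{N-j}(Y^x_j)$, which is $\E h(\cdot)$ evaluated along the ``hybrid'' path that runs $Y$ for $j$ steps and then $X$ for the remaining $N-j$ steps; at $j=0$ it equals $u_N(x)=\E h(X^x_N)$ and at $j=N$ it equals $\E h(Y^x_N)$, so telescoping gives $\E h(X^x_N)-\E h(Y^x_N)=\sum_{j=1}^N\big(\E u_{N-j+1}(Y^x_{j-1})-\E u_{N-j}(Y^x_j)\big)$. To rewrite the $j$-th summand I would use, on the one hand, the Chapman--Kolmogorov equation (\ref{YC-K}) for $X$ together with time homogeneity, which gives $u_{N-j+1}(y)=\E u_{N-j}(X^y_1)$ and hence, conditioning on $Y_{j-1}$ and using measurability of the transition kernel in its starting point, $\E u_{N-j+1}(Y_{j-1})=\E u_{N-j}(X^{Y_{j-1}}_1)$; and, on the other hand, the Markov property and time homogeneity of $Y$, which give $\E u_{N-j}(Y_j)=\E u_{N-j}(Y^{Y_{j-1}}_1)$. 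Substituting yields (\ref{orign}). Hypothesis (1) is exactly what is needed for all these expectations to be finite and for the conditioning manipulations to be legitimate; making precise the meaning of the composed/conditioned processes $X^{Y_{j-1}}_1$ and $Y^{Y_{j-1}}_1$, verifying the attendant measurability (property (2) of the transition kernels), and justifying the interchange of expectation and conditioning is the step I expect to be the main technical obstacle, though it is conceptually routine.

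Next, to pass from (\ref{orign}) to (\ref{e:mainnew}), for $1\le j\le N-1$ I would invoke hypothesis (3): since $(u_{N-j},\mcl A^X u_{N-j})$ belongs to the full generator family of $X$, the process $\mcl M_t=u_{N-j}(X^y_t)-u_{N-j}(y)-\int_0^t\mcl A^X u_{N-j}(X^y_s)\dif s$ is a mean-zero martingale, so evaluating at $t=1$ and then replacing the deterministic start $y$ by the independent random point $Y_{j-1}$ (conditioning first on $Y_{j-1}$, then applying Fubini, which is where hypothesis (2) enters) gives $\E u_{N-j}(X^{Y_{j-1}}_1)=\E u_{N-j}(Y_{j-1})+\int_0^1\E\mcl A^X u_{N-j}(X^{Y_{j-1}}_s)\dif s$. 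Combining this with the identity $\E u_{N-j}(Y^{Y_{j-1}}_1)=\E u_{N-j}(Y_{j-1})+\E\mcl A^Y u_{N-j}(Y_{j-1})$, which is immediate from the definition of $\mcl A^Y$, the $j$-th summand becomes $\int_0^1\big[\E\mcl A^X u_{N-j}(X^{Y_{j-1}}_s)-\E\mcl A^Y u_{N-j}(Y_{j-1})\big]\dif s$; summing over $j=1,\dots,N-1$ produces $\mcl I_h$. The leftover $j=N$ term is $\E u_0(X^{Y_{N-1}}_1)-\E u_0(Y^{Y_{N-1}}_1)=\E h(X^{Y_{N-1}}_1)-\E h(Y_N)$, using $u_0=h$ and $\E h(Y^{Y_{N-1}}_1)=\E h(Y_N)$; adding and subtracting $\E h(Y_{N-1})$ and regrouping recovers $\mcl{II}_h$. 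Note that $u_0=h$ is not assumed to lie in the full generator family of $X$, which is precisely why this last term must be handled separately rather than folded into $\mcl I_h$.

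Finally, (\ref{e:mainnew-1}) is immediate from (\ref{e:mainnew}): for every $h\in{\rm Lip}(1)$ satisfying (1)--(3) we have $|\E h(X_N)-\E h(Y_N)|=|\mcl I_h+\mcl{II}_h|\le|\mcl I_h|+|\mcl{II}_h|$, and taking the supremum over such $h$ gives $d_W(\mcl L(X_N),\mcl L(Y_N))\le\sup_{h\in{\rm Lip}(1)}(|\mcl I_h|+|\mcl{II}_h|)$.
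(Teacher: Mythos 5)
Your proposal is correct and follows essentially the same route as the paper: the telescoping over the hybrid quantities $\E u_{N-j}(Y^x_j)$ is precisely the paper's iterative insertion of the $Q$-kernel via Chapman--Kolmogorov, and the conversion of the one-step $X$-increments into $\int_0^1 \E\,\mcl A^X u_{N-j}(X^{Y_{j-1}}_s)\,\dif s$ via the full generator (martingale) property, together with the separate treatment of the $j=N$ term since $u_0=h$ need not lie in the full generator family, matches the paper's argument. No gaps beyond the measurability/conditioning details you already flag, which the paper likewise treats as routine.
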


\begin{remark}
In the above theorem, for $1\leq j\leq N$, the expectation $\mathbb{E}\left[u_{N-j}\left(X_{1}^{Y_{j-1}}\right)\right]$ actually means the following:
\begin{align*}
\mathbb{E}\left[u_{N-j}\left(X_{1}^{Y_{j-1}}\right)\right]=\int_{E}Q_{j-1}(x,\dif y)\int_{E}P_{1}(y,\dif z)\int_{E}h(u)P_{N-j}(z,\dif u).
\end{align*}
\end{remark}

\begin{proof}
In the proof, we will often use Chapman-Kolmogorov equation and the following relation: for all $x \in E$ and $j \ge i$,
\begin{equation} \label{e:Uj-i}
u_{j-i}(x)=\int_E h(y) P_{i,j}(x, \dif y),
\end{equation}
where we have used the definition of $u_k(.)$ and the time homoegeneous property.

For $N\geq 2$, by (\ref{Xtran}) and (\ref{XC-K}), one can write
\begin{align*}
\mathbb{E}h(X_{N}^{x})=&\int_{E}h(y)P_{0,N}(x,\dif y)\\
=&\int_{E}P_{0,1}(x,\dif z_{1})\int_{E}h(y)P_{1,N}(z_{1},\dif y) \\
=&\int_{E}u_{N-1}(z_{1})P_{0,1}(x,\dif z_{1}),
\end{align*}
where the last equality is by \eqref{e:Uj-i},
therefore,
\begin{align} \label{e:UNExp1}
\mathbb{E}h(X_{N}^{x})=&\int_{E}u_{N-1}(z_{1})P_{0,1}(x,\dif z_{1})-\int_{E}u_{N-1}(z_{1})Q_{0,1}(x,\dif z_{1})\nonumber\\
&+\int_{E}u_{N-1}(z_{1})Q_{0,1}(x,\dif z_{1}) \nonumber \\
=&\mathbb{E}\left[u_{N-1}(X_{1}^{x})\right]-\mathbb{E}\left[u_{N-1}(Y_{1}^{x})\right]+\int_{E}u_{N-1}(z_{1})Q_{0,1}(x,\dif z_{1}).
\end{align}
By \eqref{e:Uj-i} and Chapman-Kolmogorov equation, we further have
\begin{align*}
\int_{E}u_{N-1}(z_{1})Q_{0,1}(x,\dif z_{1})
=&\int_{E}Q_{0,1}(x,\dif z_{1})\int_{E}h(y)P_{1,N}(z_{1},\dif y)\\
=&\int_{E}Q_{0,1}(x,\dif z_{1})\int_{E}P_{1,2}(z_{1},\dif z_{2})\int_{E}h(y)P_{2,N}(z_{2},\dif y)\\
=&\int_{E}Q_{0,1}(x,\dif z_{1})\int_{E}u_{N-2}(z_{2})P_{1,2}(z_{1},\dif z_{2}).
\end{align*}
By a similar argument with (\ref{XC-K}), the time homogeneity and (\ref{Xtran}), we have
\begin{align*}
&\int_{E}Q_{0,1}(x,\dif z_{1})\int_{E}u_{N-2}(z_{2})P_{1,2}(z_{1},\dif z_{2}) \\
=&\int_{E}Q_{0,1}(x,\dif z_{1})\int_{E}u_{N-2}(z_{2})P_{1,2}(z_{1},\dif z_{2})-\int_{E}Q_{0,1}(x,\dif z_{1})\int_{E}u_{N-2}(z_{2})Q_{1,2}(z_{1},\dif z_{2})\\
&+\int_{E}Q_{0,1}(x,\dif z_{1})\int_{E}u_{N-2}(z_{2})Q_{1,2}(z_{1},\dif z_{2}) \\
=&\mathbb{E}\left[u_{N-2}\left(X_{1}^{Y_{1}}\right)\right]-\mathbb{E}\left[u_{N-2}\left(Y_{1}^{Y_{1}}\right)\right]
+\int_{E}u_{N-2}(z_{2})Q_{0,2}(x,\dif z_{2}),
\end{align*}
where the last equality is by Chapman-Kolmogrov equation and
the following observations:
\begin{align*}
\int_{E}Q_{0,1}(x,\dif z_{1})\int_{E}u_{N-2}(z_{2})P_{1,2}(z_{1},\dif z_{2})
&=\int_{E}\mathbb{E}\left[u_{N-2}\left(X_{1}^{z_{1}}\right)\right]Q_{0,1}(x,\dif z_{1})\\
&=\mathbb{E}\left[u_{N-2}\left(X_{1}^{Y_{1}}\right)\right],
\end{align*}
\begin{align*}
\int_{E}Q_{0,1}(x,\dif z_{1})\int_{E}u_{N-2}(z_{2})Q_{1,2}(z_{1},\dif z_{2})&=\int_{E}\mathbb{E}\left[u_{N-2}\left(Y_{1}^{z_{1}}\right)\right]Q_{0,1}(x,\dif z_{1})\\
&=\mathbb{E}\left[u_{N-2}\left(Y_{1}^{Y_{1}}\right)\right].
\end{align*}
Hence, we have
\begin{align*}
\int_{E}u_{N-1}(z_{1})Q_{0,1}(x,\dif z_{1})=&\mathbb{E}\left[u_{N-2}\left(X_{1}^{Y_{1}}\right)\right]-\mathbb{E}\left[u_{N-2}\left(Y_{1}^{Y_{1}}\right)\right]\\
&+\int_{E}u_{N-2}(z_{2})Q_{0,2}(x,\dif z_{2}),
\end{align*}
where $Y_1=Y_1^x$.

By the same argument, we can show that for all $i=1,2,...N-1$,
\begin{align}
&\int_{E}u_{N-i}(z_{i})Q_{0,i}(x,\dif z_{i})\nonumber\\
=&\mathbb{E}\left[u_{N-i-1}\left(X_{1}^{Y_{i}}\right)\right]-\mathbb{E}\left[u_{N-i-1}\left(Y_{1}^{Y_{i}}\right)\right]
+\int_{E}u_{N-i-1}(z_{i+1})Q_{0,i+1}(x,\dif z_{i+1}),
\end{align}
where $Y_i=Y_i^x$.  Combining these relations with \eqref{e:UNExp1} and noticing $Y_0^x=x$, we obtain
\begin{align} \label{e:UNExpN}
\mathbb{E}h(X_{N}^{x})=&\sum_{i=1}^N\left(\mathbb{E}\left[u_{N-i}(X_{1}^{Y_{i-1}})\right]-\mathbb{E}\left[u_{N-i}(Y_{1}^{Y_{i-1}})\right]\right)+\int_{E}u_{0}(z_N)Q_{0,N}(x,\dif z_{N}).
\end{align}
Noticing $u_0=h$, this immediately implies \eqref{orign}.

Let us now calculate each term in the sum on the right hand side. For $1\leq j\leq N$, we have
\begin{align*}
&\E u_{N-j}(X^{Y_{j-1}}_{1})-\E u_{N-j}(Y_{1}^{Y_{j-1}})\\
=&\E \left[u_{N-j}(X^{Y_{j-1}}_{1})-u_{N-j}(Y_{j-1})
  	\right]-\E\left[u_{N-j}(Y_{1}^{Y_{j-1}})-u_{N-j}(Y_{j-1})\right].
\end{align*}

When $1\leq j\leq N-1$, by the condition (3), we have
\begin{eqnarray*}\label{1}
	u_{N-j}(X^{Y_{j-1}}_{1})-u_{N-j}(Y_{j-1})&=&\int_0^1 \mcl A^X u_{N-j}(X^{Y_{j-1}}_{s}) \dif s+\mcl M_{1},
\end{eqnarray*}
where $(\mcl M_{t})_{0\leq t\leq1}$ is a martingale with mean 0, and thus
\begin{eqnarray*}\label{2}
\E\left[u_{N-j}(X^{Y_{j-1}}_{1})-u_{N-j}(Y_{j-1})\right]=\E\left[\int_0^1 \mcl A^X u_{N-j}(X^{Y_{j-1}}_{s}) \dif
s\right].
\end{eqnarray*}
On the other hand, by conditional probability, we obtain
\begin{eqnarray*}
\E\left[u_{N-j}(Y_{1}^{Y_{j-1}})-u_{N-j}(Y_{j-1})\right]&=&\E\left[\mcl A^Y u_{N-j}(Y_{j-1}) \right].
\end{eqnarray*}
Hence, for $1 \le j \le N-1$,
\begin{align*}
\E u_{N-j}(X_{j}(j-1,Y_{j-1}))-\E u_{N-j}(Y_{j})
=\E \int_0^1 \left[\mcl A^X u_{N-j}(X^{Y_{j-1}}_{s})-\mcl A^Y u_{N-j}(Y_{j-1})\right] \dif s.
\end{align*}
Combining all the relations above, we immediately obtain the equalities (\ref{orign}) and (\ref{e:mainnew}) in the theorem, as desired. Moreover, \eqref{e:mainnew-1} is an immediate corollary from \eqref{e:mainnew} by the definition of Wasserstein-1 distance. The proof is complete.
\end{proof}

\section{Three Applications}\label{application}

We only  consider in this section three applications: SDE's
approximation to online SGD, EM discretization for SDE driven by $\alpha$-stable process with $\alpha\in(1,2)$, and normal approximation. As we mentioned early, we focus on Wasserstein-1 distance,
though Theorem \ref{mainthm} can be applied to approximation problems in
other metrics, for instance, if we replace the ${\rm Lip}(1)$ function family by bounded measurable function family, the
approximation turns to be in total variation metric. The other applications will be studied in the forthcoming paper.

\subsection{Application 1: Online SGD and SDEs (\cite{CLTZ16,LTW19})}\label{SMESGD}

For the first application, we concentrate on approximating a family of online SGDs by a SDE driven by multiplicative Brownian motion. Using
our framework, we will obtain an explicit error bound in
the classical Wasserstein-1 distance. We shall give two examples for Theorem \ref{SGDthm} below, which are considered in \cite[Section 5]{LTW19}. We refer to the reader to
\cite{PJ92,CLTZ16,LTW17,TTV16,FGLLL19,LTW19} for more details of SGDs and online SGDs.

The problem approximating SGD by SDE is well studied, see for instance
\cite{TTV16,LTW17,AN19,FGLLL19,HLLL19,LTW19,BS20,FDBD20} and the references therein.

Now, we first introduce the online SGD. Estimation of model parameters by minimizing an objective function is a fundamental idea in statistics. Let
$w^{*}\in\mathbb{R}^{d}$ be the true $d$-dimensional model parameters. In common models, $w^{*}$ is the minimizer
of a convex objective $P(w):\mathbb{R}^{d}\rightarrow\mathbb{R},$ i.e.,
\begin{align*}
w^{*}={\rm argmin}\Big(P(w):=\mathbb{E}_{\zeta\sim\Pi}\psi(w,\zeta)=\int\psi(w,\zeta)\dif\Pi(\zeta)\Big),
\end{align*}
where $\zeta$ denotes the random sample from a probability distribution $\Pi$ and $\psi(w,\zeta)$ is the loss
function. The online SGD is a widely used optimization method for minimizing $P(w)$.

The online SGD is an iterative algorithm, let $w_{0}=x$ and the $k$-th iterate $w_{k}$ takes the following form,
\begin{align}\label{SGD}
w_{k}=w_{k-1}-\eta\nabla\psi(w_{k-1},\zeta_{k}), \quad k\geq1,
\end{align}
where $\eta$ is a small positive step-size known as the learning rate, $\zeta_{k}$ is the $k$-th sample randomly drawn from the distribution $\Pi,$ and
$\nabla\psi(w_{k-1},\zeta_{k})$ denotes the gradient of $\psi(w_{k-1},\zeta_{k})$ with respect to $w$ at
$w=w_{k-1}.$

It is easily seen that online SGD (\ref{SGD}) can be rewritten as
\begin{align*}
w_{k}=w_{k-1}-\eta\nabla P(w_{k-1})+\sqrt{\eta}V_{\eta}(w_{k-1},\zeta_{k}),
\end{align*}
where $V_{\eta}(w_{k-1},\zeta_{k})=\sqrt{\eta}\left(\nabla P(w_{k-1})-\nabla\psi(w_{k-1},\zeta_{k})\right)$. It is straightforward to check that
\begin{align*}
\mathbb{E}\left[V_{\eta}(w_{k-1},\zeta_{k})|w_{k-1}\right]=0, \quad {\rm Cov}\left[V_{\eta}(w_{k-1},\zeta_{k}),V_{\eta}(w_{k-1},\zeta_{k})|w_{k-1}\right]=\eta\Sigma(w_{k-1}),
\end{align*}
where
\begin{align*}
\Sigma(w_{k-1})=&\mathbb{E}\left[\left(\nabla P(w_{k-1})-\nabla\psi(w_{k-1},\zeta_{k})\right)\left(\nabla P(w_{k-1})-\nabla\psi(w_{k-1},\zeta_{k})\right)^{T}|w_{k-1}\right]\\
=&\mathbb{E}\left[\nabla\psi(w_{k-1},\zeta_{k})\nabla\psi(w_{k-1},\zeta_{k})^{T}|w_{k-1}\right]-\nabla P(w_{k-1})\nabla P(w_{k-1})^{T}.
\end{align*}

Now, we can consider the stochastic differential equation (SDE) as follows to approximate the above online SGD:
\begin{align}\label{SME}
d\hat{X}_{t}=-\nabla P(\hat{X}_{t})dt+\left(\eta\Sigma(\hat{X}_{t})\right)^{\frac{1}{2}}d B_{t},\quad \hat{X}_{0}=x,
\end{align}
where $B_{t}$ is a $d$-dimensional Brownian motion. For the research of above SDE with the noise term depending on a small parameter (the learning rate), we refer the reader to \cite{Xi01,CF14,LTW17} and the references therein.

For further use, we shall assume:

{\bf Assumption A1} (i) There
exist $\theta_{0}>0$ and $\theta_{1},\theta_{2},\theta_{3},\theta_{4},\theta_{5}\geq0$ such that for any $v_{1},v_{2},v_{3},x\in\mathbb{R}^{d}$, $\nabla P(x)$ satisfies
\begin{align}\label{diss}
\langle v_{1},\nabla_{v_{1}}\nabla P(x)\rangle\geq\theta_{0}|v_{1}|^{2}, \quad |\nabla_{v_{1}}\nabla_{v_{2}}\nabla P(x)|\leq\theta_{1}|v_{1}||v_{2}|,
\end{align}
\begin{align}\label{onthird}
|\nabla_{v_{1}}\nabla_{v_{2}}\nabla_{v_{3}}\nabla P(x)|\leq\theta_{2}|v_{1}||v_{2}|;
\end{align}
and that any $x,y\in\mathbb{R}^{d}$, $\Sigma(x)^{\frac{1}{2}}$ satisfies
\begin{align}\label{expofir}
\|\nabla_{v_{1}}\Sigma(x)^{\frac{1}{2}}\|_{{\rm HS}}\leq\theta_{3}|v_{1}|, \quad \|\nabla_{v_{2}}\nabla_{v_{1}}\Sigma(x)^{\frac{1}{2}}\|_{{\rm HS}}\leq\theta_{4}|v_{1}||v_{2}|,
\end{align}
\begin{align}
\|\nabla_{v_{3}}\nabla_{v_{2}}\nabla_{v_{1}}\Sigma(x)^{\frac{1}{2}}\|_{{\rm HS}}\leq\theta_{5}|v_{1}||v_{2}||v_{3}|.
\end{align}

(ii) There exists $\delta>0$ such that for any $x\in\mathbb{R}^{d}$ and non-zero vector $\xi\in\mathbb{R}^{d}$, $\Sigma(x)^{\frac{1}{2}}$ satisfies
\begin{align}
\xi^{T}\Sigma(x)^{\frac{1}{2}}\xi\geq\delta|\xi|^{2}.
\end{align}

\begin{remark}\label{rem}
By integration, (\ref{diss}) implies
\begin{align}\label{integration}
\langle x-y,\nabla P(x)-\nabla P(y)\rangle\geq\theta_{0}|x-y|^{2}, \quad \forall x,y\in\mathbb{R}^{d}.
\end{align}

In addition, from now on, we simply write a number $C_{\theta_{0},\ldots,\theta_{5}}$, depending on $\theta_{0},\ldots,\theta_{5}$, by $C_{\theta}$ in shorthand.
\end{remark}

Moreover, in order to ensure $\mathbb{E}|w_{k}|^{4}<\infty$, we further assume:

{\bf Assumption A2} There exists a constant $\kappa>0$ such that
\begin{align}\label{stolin}
\mathbb{E}|\nabla\psi(x,\zeta)-\nabla\psi(y,\zeta)|^{4}\leq\kappa^{4}|x-y|^{4}, \quad \forall x,y\in\mathbb{R}^{d}.
\end{align}

\begin{remark}
By (\ref{stolin}) and convexity inequality, for $j=1,2,3,4,$ it is easily seen that
\begin{align}
\E|\nabla \psi(x,\zeta)|^{j}\le& 2^{j-1} \E|\nabla \psi(x,\zeta)-\nabla \psi(0,\zeta)|^{j}+2^{j-1} \E|\nabla \psi(0,\zeta)|^{j}\nonumber\\
\le& 2^{j-1}\kappa^{j} |x|^{j}+2^{j-1} \ell^{j}_{0}\label{stofo}
\end{align}
with $\ell^{j}_{0}:= \E|\nabla \psi(0,\zeta)|^{j}.$

In addition, since $\Sigma(x)=\mathbb{E}\big[\big(\nabla P(x)-\nabla\psi(x,\zeta)\big)\big(\nabla P(x)-\nabla\psi(x,\zeta)\big)^{T}\big]$, by (\ref{stofo}), (\ref{stolin}) and the Cauchy-Schwarz inequality, we have
\begin{align}
\|\Sigma(x)^{\frac{1}{2}}\|_{{\rm {HS}}}^{2}=&{\rm {Tr}}\big(\Sigma(x)\big)=
\E|\nabla P(x)-\nabla\psi(x,\zeta)|^{2} \nonumber \\
=&\E|\nabla\psi(x,\zeta)|^{2}-|\E\nabla \psi(x,\zeta)|^{2}\le\E|\nabla\psi(x,\zeta)|^{2}\le2\ell^2_{0}+2\kappa^{2} |x|^{2}\label{stodiffu},
\end{align}
\begin{align}\label{stodiffu2}
\|\Sigma(x)\|_{{\rm {HS}}}\leq\mathbb{E}\left|\nabla P(x)-\nabla\psi(x,\zeta)\right|^{2}\leq2\ell^2_{0}+2\kappa^{2} |x|^{2}.
\end{align}
Since $\nabla P(x)=\mathbb{E}\nabla\psi(x,\zeta)$, by (\ref{stofo}), (\ref{stolin}) and the Cauchy-Schwarz inequality, we have
\begin{align}\label{Lip}
\left|\nabla P(x)-\nabla P(y)\right|\leq\mathbb{E}|\psi(x,\zeta)-\psi(y,\zeta)|\leq\kappa|x-y|, \quad \forall x,y\in\mathbb{R}^{d},
\end{align}
which further implies
\begin{align}\label{shiftlin}
\left|\nabla P(x)\right|\leq\left|\nabla P(x)-\nabla P(0)\right|+|\nabla P(0)|\leq\kappa|x|+|\nabla P(0)|.
\end{align}
\end{remark}

To illustrate the online SGD recursion in (\ref{SGD}) and the {\bf Assumption A1} and {\bf Assumption A2}, we consider the
following two motivating examples. In appendix \ref{s:A1}, we will verify that the following two examples satisfy {\bf Assumption A1} and {\bf Assumption
A2}.

\begin{example}\label{ex1}(Model in \cite[Section 5]{LTW19}) Let $H\in\mathbb{R}^{d}$ be a symmetric, positive definite matrix. Define the sample objective
\begin{align*}
\psi(x,\zeta)=\frac{1}{2}(x-\zeta)^{T}H(x-\zeta)-\frac{1}{2}{\rm Tr}(H),
\end{align*}
where $\zeta\sim N(0,I_{d})$. Then, the online SGD iterates in (\ref{SGD}) become,
\begin{align*}
w_{k}=&w_{k-1}-\eta H(w_{k-1}-\zeta_{k}),
\end{align*}
which implies
\begin{align*}
\nabla\psi(x,\zeta)=H(x-\zeta), \qquad \nabla P(x)=Hx, \qquad \Sigma(x)=H^{2}.
\end{align*}
\end{example}

\begin{example}\label{ex2}
(Variation of alternate Model in \cite[Section 5.1]{LTW19}). Let $H\in\mathbb{R}^{d\times d}$ be a symmetric, positive definite matrix, we diagonalize it in the form $H=QDQ^{T},$ where $Q$ is an orthogonal matrix and $D$ is a diagonal matrix of eigenvalues. Let $\alpha\sim N(0,I_{d})$, $\beta\sim N(0,I_{d})$ and $\alpha$ is independent of $\beta$. Denote $\zeta=(\alpha,\beta)$ and we define the loss function
\begin{align*}
\psi(x,\zeta):=\frac{1}{2}(Q^{T}x)^{T}[D+{\rm diag}(\alpha)](Q^{T}x)+\frac{\gamma}{2}(x-\beta)^{T}(x-\beta),
\end{align*}
where ${\rm diag}(\alpha)$ is a diagonal matrix, whose diagonal elements are each component of the vector $\alpha$ and $\gamma>0$ is a tuning parameter. Therefore, the online SGD iterates in (\ref{SGD}) become,
\begin{align*}
w_{k}=w_{k-1}-&\eta\left[Q[D+{\rm diag}(\alpha_{k})]Q^{T}w_{k-1}+\gamma(w_{k-1}-\beta_{k})\right],
\end{align*}
which implies
\begin{align*}
\nabla\psi(x,\zeta)=Q[D+{\rm diag}(\alpha)](Q^{T}x)+\gamma(x-\beta), \qquad \nabla P(x)=Hx+\gamma x
\end{align*}
and
\begin{align*}
\Sigma(x)=&\mathbb{E}\left[\left(Q[D+{\rm diag}(\alpha)](Q^{T}x)-Hx\right)\left(Q[D+{\rm diag}(\alpha)](Q^{T}x)-Hx\right)^{T}\right]\\
&+\mathbb{E}\left[\left(\gamma(x-\beta)-\gamma x\right)\left(\gamma(x-\beta)-\gamma x\right)^{T}\right]\\
=&Q{\rm diag}(Qx)^{2}Q^{T}+\gamma^{2}I_{d}=Q\left[{\rm diag}(Qx)^{2}+\gamma^{2}I_{d}\right]Q^{T}.
\end{align*}
\end{example}

Now, we are at the position to state our theorem of the first application.

\begin{theorem}[Online SGD v.s. SDE]\label{SGDthm}
Keep the same notations as above. Let $N\geq2$ be a natural number. Suppose that {\bf Assumption A1} and {\bf Assumption A2} hold. Then, as $0<\eta\leq\min\{1,\frac{\theta_{0}}{2(10+7\kappa^{4}+7\ell^{4}_{0})}\}$, we have
\begin{align*}
d_W(\mathcal{L}(\hat{X}_{\eta N}),\mathcal{L}(w_{N}))\leq C_{\theta,\kappa,\ell^4_{0}}(1+|x|^{3})(1+\frac{d}{\delta^{2}})(1+|\ln \eta|)\eta,
\end{align*}
where $(\hat{X}_{t})_{t\geq0}$ is the diffusion process, which is defined by SDE (\ref{SME}), $(w_{k})_{k\in\Z_+}$
is the online SGD iteration process, which is defined by (\ref{SGD}).
\end{theorem}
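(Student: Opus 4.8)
The plan is to apply the general framework of Theorem~\ref{mainthm} with $X_t = \hat X_t$ the SDE~\eqref{SME}, $Y_k = w_k$ the online SGD, $E=\mathbb{R}^d$, and $N$ steps of the SGD compared with the diffusion run for time $\eta N$; more precisely one should rescale time so that one SGD step corresponds to a time increment $\eta$ for the diffusion, i.e.\ replace $(X_t)$ by $(\hat X_{\eta t})$ so that the discrete clocks match. Writing $u_k(x) = \mathbb{E}[h(\hat X^x_{\eta k})]$ for $h\in\mathrm{Lip}(1)$, the master identity~\eqref{e:mainnew} gives
\begin{align*}
\mathbb{E}h(\hat X^x_{\eta N}) - \mathbb{E}h(w_N^x) = \mathcal{I}_h + \mathcal{II}_h,
\end{align*}
where $\mathcal{I}_h$ collects the per-step generator discrepancies $\int_0^\eta[\mathbb{E}\mathcal{A}^X u_{N-j}(\hat X^{w_{j-1}}_s) - \tfrac1\eta\,\mathbb{E}\mathcal{A}^Y u_{N-j}(w_{j-1})]\,\mathrm{d}s$ and $\mathcal{II}_h$ is a single-step boundary term of size $O(\eta)$ (since one diffusion step and one SGD step each move the state by $O(\sqrt\eta)$ in $L^1$, giving $O(\sqrt\eta)$; a more careful bound using the drift/centering gives $O(\eta)$ after using Lipschitz regularity of $h$ — this needs the fourth-moment bounds from Assumption~A2, hence the $(1+|x|^3)$ growth). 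The verification of the three hypotheses of Theorem~\ref{mainthm} (finiteness of the relevant expectations, $(u_k,\mathcal{A}^X u_k)$ in the full generator family) is where It\^o's formula enters and will rely on moment estimates $\mathbb{E}|w_k|^4<\infty$ and $\mathbb{E}|\hat X_t|^4<\infty$ uniform in $k,t$ — these are exactly what the dissipativity~\eqref{diss}/\eqref{integration} and~\eqref{stolin} are designed to give, and I would record them as auxiliary lemmas (proved in the appendices the paper refers to).

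The core of the argument is bounding $\mathcal{I}_h$. The generator of the diffusion is $\mathcal{A}^X f(x) = \eta\big[-\langle\nabla P(x),\nabla f(x)\rangle + \tfrac12\langle \eta\Sigma(x),\nabla^2 f(x)\rangle_{\mathrm{HS}}\big]$ (the extra $\eta$ from the time rescaling), while the SGD generator is $\mathcal{A}^Y f(y) = \mathbb{E}[f(y - \eta\nabla\psi(y,\zeta)) - f(y)]$. A third-order Taylor expansion of $\mathcal{A}^Y$ around $y$, using $\mathbb{E}[\nabla\psi(y,\zeta)] = \nabla P(y)$ and $\mathrm{Cov}(\sqrt\eta V_\eta) = \eta\Sigma(y)$, shows that $\mathcal{A}^Y f(y) = \eta[-\langle\nabla P(y),\nabla f(y)\rangle + \tfrac12\langle\eta\Sigma(y),\nabla^2 f(y)\rangle_{\mathrm{HS}}] + R(y)$ with a remainder $R(y)$ controlled by $\eta^{3/2}$ (really $\eta^2$ after the first-order term cancels, combined with the third-moment bound~\eqref{stofo} for $j=3$) times $\|\nabla^3 f\|_\infty$ evaluated along the path. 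Thus in $\mathcal{I}_h$ the leading terms from $\mathcal{A}^X u_{N-j}(\hat X^{w_{j-1}}_s)$ and $\tfrac1\eta\mathcal{A}^Y u_{N-j}(w_{j-1})$ agree up to (a) the remainder $R$, giving a contribution $\sum_{j=1}^{N-1} O(\eta^2)\|\nabla^3 u_{N-j}\|_\infty$, and (b) the difference between evaluating the smooth test function $-\langle\nabla P,\nabla u\rangle + \tfrac\eta2\langle\Sigma,\nabla^2 u\rangle_{\mathrm{HS}}$ at $\hat X^{w_{j-1}}_s$ versus at $w_{j-1}=\hat X^{w_{j-1}}_0$, which over $s\in[0,\eta]$ is $O(\eta)\times$(Lipschitz-in-$x$ constant of that function)$\,=\,O(\eta)\times(\|\nabla P\|\,\|\nabla^2 u\| + \ldots)$, giving another $\sum_{j=1}^{N-1} O(\eta^2)$ after integrating $\mathrm{d}s$ over an interval of length $\eta$.

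Everything therefore reduces to uniform-in-time regularity estimates for $u_k = P^{\hat X}_{\eta k} h$: one needs $\|\nabla u_k\|_\infty$, $\|\nabla^2 u_k\|_\infty$, $\|\nabla^3 u_k\|_\infty$ bounded (with at most polynomial growth in the starting point, matching the $(1+|x|^3)$ factor) \emph{uniformly in $k$}. This is the crux and the main obstacle: because $h$ is merely Lipschitz, $\|\nabla u_k\|_\infty \le \|h\|_{\mathrm{Lip}} \cdot \|\nabla \hat X^x_{\eta k}\|$ is controlled by the dissipativity~\eqref{diss} (which gives exponential contraction $e^{-\theta_0 \eta k}$ of the derivative flow, hence a uniform bound), but the second and third derivatives $\nabla^2 u_k, \nabla^3 u_k$ are \emph{not} bounded by derivatives of $h$; one must instead exploit the smoothing of the diffusion semigroup. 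Since Assumption~A1(ii) guarantees uniform ellipticity ($\xi^T\Sigma^{1/2}\xi \ge \delta|\xi|^2$), a Bismut--Elworthy--Li / Malliavin-calculus integration-by-parts (this is presumably the content of the appendix on Malliavin calculus) yields $\|\nabla^2 u_k\|_\infty \lesssim (1+\frac{1}{\sqrt{\eta k}})$ and $\|\nabla^3 u_k\|_\infty \lesssim (1+\frac{1}{\eta k})$ type bounds for small times combined with the contraction for large times — and it is the short-time singularity $\frac{1}{\eta k}$ summed against the per-step $\eta^2$, namely $\sum_{j=1}^{N-1}\eta^2 \cdot \frac{1}{\eta(N-j)} \lesssim \eta \sum_{m=1}^{N-1}\frac1m \lesssim \eta\ln N \asymp \eta|\ln\eta|$ (using $N\asymp 1/\eta$ on the relevant time scale), that produces the characteristic $(1+|\ln\eta|)\eta$ rate and the $\frac{d}{\delta^2}$ dependence (from the ellipticity constant entering the integration-by-parts weights, with $d$ from the Hilbert--Schmidt norms of $\Sigma$). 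Assembling these pieces — the boundary term $O(\eta)$, the remainder term $O(\eta)$, and the smoothing-against-singularity term $O(\eta|\ln\eta|)$ — and taking the supremum over $h\in\mathrm{Lip}(1)$ via~\eqref{e:mainnew-1} gives the claimed bound. I expect the bookkeeping of the polynomial-in-$|x|$ growth through all the moment and regularity estimates, and the precise short-time gradient estimates for $\nabla^3 u_k$ under only $\mathcal{C}^3$-type control on the coefficients, to be the most delicate points.
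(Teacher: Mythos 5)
Your proposal is correct and follows essentially the same route as the paper: the same time-rescaling $X_t=\hat X_{\eta t}$, the same Taylor comparison of $\mathcal{A}^X u_{N-j}$ with $\mathcal{A}^Y u_{N-j}$, and the same Malliavin/Bismut-type semigroup derivative bounds (with short-time singularities $1/\sqrt{\eta t}$, $1/(\eta t)$ and long-time decay $e^{-\theta_0 t/8}$) whose summation against the per-step $\eta^2$ error produces the $\eta(1+|\ln\eta|)$ rate. The only loose point is your heuristic ``$N\asymp 1/\eta$'': the bound is uniform in $N$ because the exponential factor from dissipativity truncates the sum at $j\lesssim 1/\eta$, which is exactly how the paper converts $\sum_j$ into $\int_\eta^{\eta N}(1+\tfrac1v+\cdots)e^{-\theta_0 v/8}\,\dif v\lesssim 1+|\ln\eta|$.
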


\subsection{Application 2: EM discretization for SDEs driven by $\alpha$-stable process with $\alpha\in(1,2)$ (\cite{JMW96,TA18})}\label{stable}

In recent years, the EM discretization for SDEs driven by $\alpha$-stable process has been studied by \cite{JMW96,WY07,TA18,Liu19} in the finite time interval. In this subsection, we will consider a particular Ornstein-Uhlenbeck process driven by $\alpha$-stable process with $\alpha\in(1,2)$ and obtain a uniform convergence rate with respect to the time.

Let $(Z_{t})_{t\geq0}$ be the $d$-dimensional rotationally symmetric $\alpha$-stable process, i.e., $\mathbb{E}[e^{i\langle
Z_{t},\lambda\rangle}]=e^{-t|\lambda|^{\alpha}},$ then we have
$Z_{t}\stackrel{\rm d}{=}t^{\frac{1}{\alpha}}Z_{1}$ (see, e.g., \cite[Theorem 14.3]{Sat99}) and the corresponding
generator is
\begin{align*}
\Delta^{\frac{\alpha}{2}}f(x)=d_{\alpha}\int_{\mathbb{R}^{d}}\frac{f(x+y)-f(x)}{|y|^{\alpha+d}}dy,\quad
f\in\mathcal{C}_{b}^{2}(\mathbb{R}^{d}),
\end{align*}
where
\begin{align*}
d_{\alpha}=\big(\int_{0}^{\infty}\frac{1-\cos y}{y^{\alpha+1}}dy\int_{\mathbb{S}^{d-1}}|\langle
e,\theta\rangle|^{\alpha}d\theta\big)^{-1}
\end{align*}
and $e$ is an unit vector. Moreover, it is well known that
$d_{\alpha}=\frac{\alpha2^{\alpha}\Gamma((d+\alpha)/2)}{\Gamma(d/2)\Gamma((2-\alpha)/2)}$ (see, e.g.,
\cite{BG68}).

Now, we consider the following SDEs:
\begin{align}\label{SDE}
d\tilde{X}_{t}=-\frac{1}{\alpha}\tilde{X}_{t}dt+dZ_{t},\quad X_{0}=x\in\mathbb{R}^{d}.
\end{align}

Let $\tilde{Z}_{1},\tilde{Z}_{2},\cdots$ be a sequence of random vectors independently drawn from the Pareto random variable $\tilde{Z},$ which has the probability density function
\begin{align}\label{density}
p(z)=\frac{\alpha}{V(\mathbb{S}^{d-1})|z|^{\alpha+d}}{\bf 1}_{(1,\infty)}(|z|),
\end{align}
where $V(\mathbb{S}^{d-1})$ is the surface area of $\mathbb{S}^{d-1}$ and it is well known that $V(\mathbb{S}^{d-1})=\frac{2\pi^{\frac{d}{2}}}{\Gamma(\frac{d}{2})}$.
The reason we choose Pareto random variable is that it has an explicit density function and thus can be easily sampled on computer.

We consider the following discrete Markov process with step size $\eta\in(0,1)$ to approximate the above SDEs: let initial value $\tilde{Y}_{0}=x$ and
\begin{align}\label{E-M}
\tilde{Y}_{k+1}=\tilde{Y}_{k}-\frac{\eta}{\alpha}\tilde{Y}_{k}+\frac{\eta^{\frac{1}{\alpha}}}{\sigma}\tilde{Z}_{k+1}, \quad k\geq0,
\end{align}
where $\sigma=\big(\frac{\alpha}{V(\mathbb{S}^{d-1})d_{\alpha}}\big)^{\frac{1}{\alpha}}$.

Then, we have the following theorem:

\begin{theorem} [EM discretization for SDEs driven by $\alpha$-stable process]\label{stablethm}
Keep the same notations as above. Let $N\geq2$ be a natural number. Then, as $\eta\in(0,1]$, we have
\begin{align*}
d_W(\mathcal{L}(\tilde{X}_{\eta N}),\mathcal{L}(\tilde{Y}_{N}))\leq C_{\alpha,d}(1+|x|)\eta^{\frac{2-\alpha}{\alpha}},
\end{align*}
where $(\tilde{X}_{t})_{t\geq0}$ and $(\tilde{Y}_{k})_{k\in\mathbb{Z}^{+}}$ are defined by (\ref{SDE}) and (\ref{E-M}), respectively.
\end{theorem}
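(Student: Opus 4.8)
The plan is to cast this into Theorem~\ref{mainthm} with the time‑rescaled continuous process $X_t:=\tilde X_{\eta t}$ (so $X_N=\tilde X_{\eta N}$) and the discrete process $Y_k:=\tilde Y_k$. Then $\mathcal A^X=\eta\tilde{\mathcal A}$ with $\tilde{\mathcal A}v(x)=-\tfrac1\alpha\langle x,\nabla v(x)\rangle+\Delta^{\alpha/2}v(x)$, while $\mathcal A^Y v(y)=\E[v((1-\tfrac\eta\alpha)y+\tfrac{\eta^{1/\alpha}}{\sigma}\tilde Z_1)]-v(y)$. Since $\alpha\in(1,2)$, both $\tilde Z$ and $Z_1$ have finite first moment. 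Using the self‑similarity $Z_t\overset{\mathrm d}{=}t^{1/\alpha}Z_1$ one gets the distributional identity $\tilde X^x_t\overset{\mathrm d}{=}e^{-t/\alpha}x+(1-e^{-t})^{1/\alpha}Z_1$, hence $u_k(x)=\E h(\tilde X^x_{\eta k})=P_{\eta k}h(x)$ with $P_t$ the Ornstein--Uhlenbeck--stable semigroup; in particular each $u_k$ is Lipschitz with bounded derivatives of all orders $\ge1$, which makes the integrability hypotheses of Theorem~\ref{mainthm} routine and makes $(u_k,\mathcal A^X u_k)$ a member of the full generator family of $X_t$ via the L\'evy--It\^o formula for \eqref{SDE}. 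It then remains to bound $\sup_{h\in{\rm Lip}(1)}(|\mcl{I}_h|+|\mcl{II}_h|)$.

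The crucial inputs are smoothing estimates for $u_k=P_{\eta k}h$. Writing $P_t h$ via the smooth $\alpha$‑stable density (whose derivatives are all integrable) and transferring derivatives onto it after a change of variables, one obtains $\|\nabla^j u_k\|_\infty\le C_j e^{-jk\eta/\alpha}(1-e^{-k\eta})^{(1-j)/\alpha}$, i.e.\ $\lesssim(\eta k)^{(1-j)/\alpha}$ for $k\eta\le1$ and exponentially small for $k\eta\ge1$; combined with $\|\Delta^{\alpha/2}F\|_\infty\le C(\|\nabla F\|_\infty+\|\nabla^2 F\|_\infty)$ and the interpolation $\|\Delta^{\alpha/2}F\|_\infty\le C\|\nabla F\|_\infty^{2-\alpha}\|\nabla^2 F\|_\infty^{\alpha-1}$ this controls every mixed norm $\|\Delta^{\alpha/2}\nabla^j u_k\|_\infty$ that arises. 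For the one‑step comparison in $\mcl I_h$, fix $v=u_m$, put $z:=e^{-\eta/\alpha}y$, and decompose
\[
\E[v(X^y_1)-v(Y^y_1)]=\big(\E[v(z+W_1)]-\E[v(z+W_2)]\big)+\big(\E[v((1-\tfrac{\eta}{\alpha})y+W_2)]-\E[v(z+W_2)]\big),
\]
with $W_1:=(1-e^{-\eta})^{1/\alpha}Z_1\overset{\mathrm d}{=}Z_{1-e^{-\eta}}$ and $W_2:=\tfrac{\eta^{1/\alpha}}{\sigma}\tilde Z_1$. The second bracket is $O(\|\nabla v\|_\infty\eta^2|y|)$ since $|1-\tfrac\eta\alpha-e^{-\eta/\alpha}|\le\eta^2/2$ --- this is the only place $|y|$ enters, and only linearly, so no second moment of $\tilde Y_k$ is required. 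For the first bracket I would compare both $\E[v(z+W_i)]-v(z)$ to $\eta\Delta^{\alpha/2}v(z)$: Dynkin for the pure stable process gives $\E[v(z+W_1)]-v(z)=\int_0^{1-e^{-\eta}}\E[\Delta^{\alpha/2}v(z+Z_s)]\dif s$, while the choice $\sigma^\alpha=\tfrac{\alpha}{V(\mathbb S^{d-1})d_\alpha}$ gives the \emph{exact} identity $\E[v(z+W_2)]-v(z)=\eta d_\alpha\int_{|w|>\eta^{1/\alpha}/\sigma}\tfrac{v(z+w)-v(z)}{|w|^{\alpha+d}}\dif w$. The $W_2$‑discrepancy is $\eta d_\alpha\int_{|w|\le\eta^{1/\alpha}/\sigma}\tfrac{v(z+w)-v(z)-\langle\nabla v(z),w\rangle}{|w|^{\alpha+d}}\dif w=O(\|\nabla^2 v\|_\infty\eta^{2/\alpha})$ (the linear term drops by symmetry), and this truncation of the L\'evy measure at scale $\eta^{1/\alpha}$ is precisely the source of the rate $\eta^{(2-\alpha)/\alpha}$; the $W_1$‑discrepancy splits into $(1-e^{-\eta}-\eta)\Delta^{\alpha/2}v(z)=O(\eta^2(\|\nabla v\|_\infty+\|\nabla^2 v\|_\infty))$ and $\int_0^{1-e^{-\eta}}\E[\Delta^{\alpha/2}v(z+Z_s)-\Delta^{\alpha/2}v(z)]\dif s$, for which I would use the truncated second‑order bound $|\E[g(z+Z_s)]-g(z)|\le Cs\|\nabla g\|_\infty^{2-\alpha}\|\nabla^2 g\|_\infty^{\alpha-1}$ with $g=\Delta^{\alpha/2}v$ (exploiting the symmetry of $Z_s$).

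Summing $v=u_{N-j}$ over $j=1,\dots,N-1$ (with $\E$ over $Y_{j-1}$) and inserting these bounds together with the moment estimate $\sup_k\E|\tilde Y_k|\le|x|+C_\alpha\eta^{-(\alpha-1)/\alpha}$ coming from \eqref{E-M} and $\E|\tilde Z_1|=\tfrac{\alpha}{\alpha-1}$ --- which only ever occurs multiplied by at least $\eta$, and $\eta\cdot\eta^{-(\alpha-1)/\alpha}=\eta^{1/\alpha}\le\eta^{(2-\alpha)/\alpha}$ --- every resulting series telescopes to $O_{\alpha,d}((1+|x|)\eta^{(2-\alpha)/\alpha})$ \emph{uniformly in $N$}: the small‑$k$ polynomial blow‑up of $\|\nabla^j u_k\|_\infty$ and of $\|\Delta^{\alpha/2}\nabla^j u_k\|_\infty$ is summable once the exponent exceeds $1$ (which holds from $j\ge3$ on, since $\alpha<2$), while the large‑$k$ tail is exponentially small, so the extra factor $1/\eta$ from the length of the sum is exactly absorbed. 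Finally $\mcl{II}_h=\E[h(X^{Y_{N-1}}_1)-h(Y_{N-1})]+\E[h(Y_N)-h(Y_{N-1})]$ is estimated directly, each one‑step displacement having conditional $L^1$‑norm $\lesssim\eta|y|+\eta^{1/\alpha}$, so $|\mcl{II}_h|\le C_{\alpha,d}(1+|x|)\eta^{(2-\alpha)/\alpha}$ as well; then \eqref{e:mainnew-1} gives the claim.

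The hard part will be the derivative bookkeeping in the $W_1$‑discrepancy: a crude Lipschitz bound for $\Delta^{\alpha/2}v$ would pair $\|\nabla^3 v\|_\infty$ with only $\eta^{1+1/\alpha}$, which sums to $\eta^{(\alpha-1)/\alpha}$ and is \emph{too large} for $\alpha<3/2$; the interpolation and truncated‑Taylor estimates above are needed exactly so that the higher derivatives of $u_m$ come weighted by enough powers of $\eta$ (effectively $\eta^{3/\alpha}$), after which one must still verify that every series closes uniformly in $N$. A secondary technical point is justifying the full‑generator‑family hypothesis (L\'evy--It\^o formula) for the linearly growing functions $u_k$.
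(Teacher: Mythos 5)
Your proposal is correct and follows the same architecture as the paper's proof: identify $X_t=\tilde X_{\eta t}$, $Y_k=\tilde Y_k$, apply the telescoping identity of Theorem~\ref{mainthm}, prove a one-step comparison of order $\eta^{2/\alpha}$ whose dominant contribution is exactly the truncation of the L\'evy measure at scale $\eta^{1/\alpha}$ (via the identity $\mathbb{E}[v(z+W_2)]-v(z)=\eta d_\alpha\int_{|w|>\eta^{1/\alpha}/\sigma}(v(z+w)-v(z))|w|^{-\alpha-d}\dif w$), weight it by the gradient estimates $|\nabla Q_t h|\le e^{-t/\alpha}$, $\|\nabla^2 Q_t h\|_{\mathrm{HS}}\lesssim t^{-1/\alpha}e^{-t/\alpha}$, and sum to get $\eta^{(2-\alpha)/\alpha}$ uniformly in $N$. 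Two sub-steps are executed differently. First, for the remainder $\int_0^{\cdot}\mathbb{E}[\Delta^{\alpha/2}v(z+Z_s)-\Delta^{\alpha/2}v(z)]\dif s$ the paper avoids third derivatives entirely by using the H\"older-type estimate $|\Delta^{\alpha/2}v(x)-\Delta^{\alpha/2}v(y)|\le C\|\nabla^2 v\|_{\mathrm{HS}}|x-y|^{2-\alpha}$ together with $\mathbb{E}|Z_r|^{2-\alpha}=C r^{(2-\alpha)/\alpha}$, which closes with only the two derivative bounds of Lemma~\ref{Qregular}; your interpolation route through $\|\nabla\Delta^{\alpha/2}v\|$ and $\|\nabla^2\Delta^{\alpha/2}v\|$ also sums (the exponent $2-1/\alpha>1$ as you note), but it obliges you to prove third- and fourth-order analogues of Lemma~\ref{Qregular}, which the paper never needs. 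Second, for the moments of $\tilde Y_k$ the paper proves the uniform bound $\mathbb{E}|\tilde Y_k|\le C_{\alpha,d}(1+|x|)$ by a Lyapunov-function argument (Lemma~\ref{EMmoment}) exploiting the symmetry of the truncated jump, whereas you use the cruder triangle-inequality bound $|x|+C\eta^{-(\alpha-1)/\alpha}$ and verify that $|y|$ only ever appears multiplied by at least $\eta$; this is a legitimate shortcut here, though it makes the final constant's dependence on $|x|$ slightly more delicate to track and would not survive in settings where the state enters the error without an accompanying factor of $\eta$. Also note the paper's one-step drift error (its $\mathcal{J}_1$ in Lemma~\ref{compare}) is handled via $\mathbb{E}|\tilde X_r^x-x|\lesssim(1+|x|)r^{1/\alpha}$ from Lemma~\ref{SDEM} rather than via the exact exponential representation, but this is cosmetic.
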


\subsection{Application 3: Multivariate Normal CLT (\cite{Ste72,CGS10})}\label{norm}

Finally, we apply Theorem \ref{mainthm} to the multivariate normal approximation, and recover the results in
\cite{VV10,FSX19}. 

In this application, we denote the $d$-dimensional Brownian motion by $(B_{t})_{t\geq0}$ and denote the $d$-dimensional standard normal distribution by $N(0,I_{d}),$ that is, if $B\sim N(0,I_{d}),$ then
$\mathbb{E}[e^{i\langle
B,\lambda\rangle}]=e^{-\frac{|\lambda|^{2}}{2}}$ for any $\lambda\in\mathbb{R}^{d}.$ Moreover, it is well known that $B\stackrel{\rm d}{=}B_{1}.$

\begin{theorem} [Multivariate normal CLT]\label{normalCLT}
Let $B\sim N(0,I_{d})$ and $S_{n}=\sum_{i=1}^{n}\frac{\xi_{i}}{\sqrt{n}}$
with i.i.d. random vectors $(\xi_{i})_{i\in\mathbb{N}}$ satisfying $\mathbb{E}\xi_{i}=\mathbf{0},$
$\mathbb{E}\xi_{i}\xi_{i}^{T}=I_{d}$ and $\sup_{i}\mathbb{E}|\xi_{i}|^{3}<\infty$. Then, we have
\begin{align*}
d_W(\mathcal{L}(B),\mathcal{L}(S_{n}))\leq\big[(\frac{2}{3}d+1)\mathbb{E}|B|+\frac{1}{3}\mathbb{E}|\xi_{1}|^{3}+\mathbb{E}|\xi_{1}|\big]n^{-\frac{1}{2}}(1+\ln
n).
\end{align*}
\end{theorem}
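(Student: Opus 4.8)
The plan is to apply Theorem \ref{mainthm} with $E = \mathbb{R}^d$, taking $(X_t)_{t\ge0}$ to be the $d$-dimensional Brownian motion $(B_t)_{t\ge0}$ (so $X_1^x = x + B_1 \sim N(x, I_d)$, and in particular $\mathcal{L}(X_n^{\mathbf 0}) = \mathcal{L}(\sqrt n\, B) = \mathcal{L}(\sqrt n\, B_1)$, while $\mathcal{L}(X_1^{\mathbf 0}) = \mathcal{L}(B)$) and $(Y_k)_{k\in\mathbb{Z}_+}$ to be the random walk $Y_k = \xi_1 + \cdots + \xi_k$ (so that $\mathcal{L}(Y_n) = \mathcal{L}(\sqrt n\, S_n)$). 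After applying Theorem \ref{mainthm} to the rescaled objects, one divides by $\sqrt n$ at the end; more precisely, for $h\in\mathrm{Lip}(1)$ on $\mathbb{R}^d$ we test against $h_n(\cdot) := h(\cdot/\sqrt n)$, which lies in $\mathrm{Lip}(1/\sqrt n)$, so that $\mathbb{E} h(B) - \mathbb{E} h(S_n) = \mathbb{E} h_n(\sqrt n B_1) - \mathbb{E} h_n(Y_n)$. The generator of Brownian motion is $\mathcal{A}^X f = \tfrac12 \Delta f$, and since the $\xi_i$ are centered with identity covariance, for $f\in\mathcal{C}^3$ a third-order Taylor expansion gives $\mathcal{A}^Y f(y) = \mathbb{E}[f(y+\xi_1) - f(y)] = \tfrac12 \Delta f(y) + R(y)$ with $|R(y)| \le \tfrac16 \|\nabla^3 f\|_\infty \mathbb{E}|\xi_1|^3$; this exact cancellation of the $\tfrac12\Delta$ terms in $\mathcal{I}_h$ is the structural reason the method works here.

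Next I would estimate the two pieces from \eqref{e:mainnew}. For $\mathcal{II}_h$: both terms are controlled by the Lipschitz constant of the (rescaled) test function times $\mathbb{E}|B_1|$ and $\mathbb{E}|\xi_1|$ respectively — concretely $|\mathbb{E}[h_n(X_1^{Y_{N-1}}) - h_n(Y_{N-1})]| \le n^{-1/2}\mathbb{E}|B_1| = n^{-1/2}\mathbb{E}|B|$ and $|\mathbb{E}[h_n(Y_N) - h_n(Y_{N-1})]| \le n^{-1/2}\mathbb{E}|\xi_1|$. For $\mathcal{I}_h$: writing $u_k(x) = \mathbb{E} h_n(x + B_k) = (\text{heat semigroup applied to } h_n)$, the cancellation above leaves
\begin{align*}
|\mathcal{I}_h| \;\le\; \sum_{j=1}^{N-1}\int_0^1 \big|\mathbb{E}\big[\tfrac12\Delta u_{N-j}(X_s^{Y_{j-1}}) - \tfrac12\Delta u_{N-j}(Y_{j-1})\big]\big|\,\mathrm{d}s \;+\; \sum_{j=1}^{N-1} \tfrac16 \|\nabla^3 u_{N-j}\|_\infty\, \mathbb{E}|\xi_1|^3.
\end{align*}
The first double sum is handled by a further Taylor step: $\Delta u_{N-j}(X_s^{Y_{j-1}}) - \Delta u_{N-j}(Y_{j-1})$ has conditional mean of size $O(s\|\nabla^3 u_{N-j}\|_\infty)$ after using $\mathcal{A}^X \Delta u = \tfrac12\Delta^2 u$ and the martingale property (or just a second-order expansion in the Brownian increment, whose odd moment vanishes and whose variance is $O(s)$), contributing another $\sum_j \|\nabla^3 u_{N-j}\|_\infty \cdot O(1)$. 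So everything reduces to bounding $\sum_{j=1}^{N-1}\|\nabla^3 u_{N-j}\|_\infty = \sum_{k=1}^{N-1}\|\nabla^3 u_k\|_\infty$.

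The main obstacle — and the reason for the $\ln n$ factor — is the smoothing estimate for the heat semigroup: since $h_n \in \mathrm{Lip}(1/\sqrt n)$ with no higher regularity, one needs $\|\nabla^3 u_k\|_\infty = \|\nabla^3 (P_k h_n)\|_\infty \le \dfrac{C(d)}{\sqrt n}\, k^{-1}$, obtained by differentiating the Gaussian heat kernel $p_k(x) = (2\pi k)^{-d/2} e^{-|x|^2/(2k)}$ twice (one derivative is absorbed by the Lipschitz bound on $h_n$, the remaining two each cost $k^{-1/2}$), giving $\int_{\mathbb{R}^d}|\nabla^2 p_k(x)|\,\mathrm{d}x \le C(d)/k$. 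Summing, $\sum_{k=1}^{N-1} k^{-1} \le 1 + \ln N = 1 + \tfrac12\ln n + \ln N/\sqrt n \cdot(\dots)$ — more simply $\sum_{k=1}^{n} k^{-1}\le 1+\ln n$ — which produces the $n^{-1/2}(1+\ln n)$ rate. Tracking the constants carefully through the Taylor remainders (third-derivative bound with $\mathbb{E}|\xi_1|^3$, a factor $\tfrac16$, plus the $\Delta$-comparison term contributing the remaining portion) and the explicit value $\int|\nabla^2 p_k| \,\mathrm{d}x$, together with the $\mathcal{II}_h$ bound $(\mathbb{E}|B| + \mathbb{E}|\xi_1|)n^{-1/2}$, yields the stated coefficient $(\tfrac23 d + 1)\mathbb{E}|B| + \tfrac13\mathbb{E}|\xi_1|^3 + \mathbb{E}|\xi_1|$; verifying conditions (1)–(3) of Theorem \ref{mainthm} is routine here since $h_n$ is Lipschitz (hence $u_k$ and its derivatives have at most linear growth, all moments involved are finite under $\sup_i\mathbb{E}|\xi_i|^3<\infty$), and Itô's formula applied to the smooth function $u_k$ and Brownian motion gives membership in the full generator family.
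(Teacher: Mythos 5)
Your proposal is correct and follows essentially the same route as the paper: the paper works with the slowed-down process $X_t=B_{t/n}$ and ${\rm Lip}(1)$ test functions rather than your full-speed Brownian motion tested against $h_n\in{\rm Lip}(1/\sqrt n)$, but this is the identical argument up to rescaling, and the key ingredients — the cancellation of the $\tfrac12\Delta$ terms, the heat-kernel smoothing bound making the Lipschitz constant of $\nabla^2 u_k$ of order $k^{-1}$ (the paper's Lemma \ref{normal}), and $\sum_{k\le n}k^{-1}\le 1+\ln n$ — all coincide, as does the final constant. The one minor slip is in your treatment of the first sum in $\mathcal{I}_h$: the correct bound on $\big|\mathbb{E}[\Delta u_{N-j}(X_s^{Y_{j-1}})-\Delta u_{N-j}(Y_{j-1})]\big|$ is of order $\sqrt{s}\,$ times the Lipschitz constant of $\Delta u_{N-j}$ (via $\mathbb{E}|B_s|$), not $s$ times a third-derivative bound, and this is exactly the direct estimate the paper uses, producing the factor $\int_0^1\sqrt{s}\,\dif s=\tfrac23$ in front of $d\,\mathbb{E}|B|$.
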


\section{Proofs of Theorems \ref{SGDthm}, \ref{stablethm} and \ref{normalCLT}}

In this section, with the help of Theorem \ref{mainthm}, we focus on proving the Theorem \ref{SGDthm}, Theorem \ref{stablethm} and Theorem \ref{normalCLT}.

\subsection{Proof of Theorem \ref{SGDthm}}\label{proofsgd}
We first give the following upper bounds of the processes $w_{k}$ and $\hat{X}_{t}$, which will be proved in
Appendix \ref{s:A1}.

\begin{lemma}\label{second moment}
Let $w_{k}$ be defined by (\ref{SGD}) with $w_{0}=x\in\mathbb{R}^{d}$. Then, as $\eta\leq\min\{1,\frac{\theta_{0}}{2(10+7\kappa^{4}+7\ell^{4}_{0})}\}$, for any $k\geq1$, we have
\begin{align}\label{SGDM}
\mathbb{E}|w_{k}|^{4}\leq|w_{0}|^{4}+C_{\theta,\kappa,\ell_{0}^{4}}.
\end{align}
\end{lemma}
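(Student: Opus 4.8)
The plan is to derive a recursive inequality for $\mathbb{E}|w_k|^4$ from the SGD update \eqref{SGD} and then iterate it, exploiting the dissipativity \eqref{integration} to absorb the "drift" term and the moment bounds \eqref{stofo}, \eqref{stolin} to control the "noise" term. First I would write $w_k = w_{k-1} - \eta \nabla\psi(w_{k-1},\zeta_k)$ and expand
\begin{align*}
|w_k|^2 = |w_{k-1}|^2 - 2\eta \langle w_{k-1}, \nabla\psi(w_{k-1},\zeta_k)\rangle + \eta^2 |\nabla\psi(w_{k-1},\zeta_k)|^2.
\end{align*}
Then $|w_k|^4 = \big(|w_{k-1}|^2 - 2\eta \langle w_{k-1}, \nabla\psi(w_{k-1},\zeta_k)\rangle + \eta^2|\nabla\psi(w_{k-1},\zeta_k)|^2\big)^2$; I would expand this into the six cross terms, take conditional expectation $\mathbb{E}[\,\cdot\,|w_{k-1}]$, and use $\mathbb{E}[\nabla\psi(w_{k-1},\zeta_k)|w_{k-1}] = \nabla P(w_{k-1})$ so that the leading cross term contributes $-4\eta|w_{k-1}|^2\langle w_{k-1},\nabla P(w_{k-1})\rangle \le -4\eta\theta_0 |w_{k-1}|^4$ by \eqref{integration} (applied with $y=0$, together with a bound on the $\nabla P(0)$ contribution, or noting $\langle x, \nabla P(x)\rangle \ge \theta_0|x|^2 - C|x|$).

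Next I would bound every remaining term by $C\eta^j(1+\mathbb{E}[|w_{k-1}|^m|w_{k-1}])$ with $j\ge 1$ and $m\le 4$: using \eqref{stofo} we have $\mathbb{E}[|\nabla\psi(w_{k-1},\zeta_k)|^m | w_{k-1}] \le 2^{m-1}\kappa^m|w_{k-1}|^m + 2^{m-1}\ell_0^m$ for $m=1,2,3,4$, and \eqref{shiftlin} handles $|\nabla P(w_{k-1})|$. Applying Young's inequality $a^{m} b^{4-m} \le \frac{m}{4}a^4 + \frac{4-m}{4}b^4$ to split mixed powers of $|w_{k-1}|$ and constants, I obtain, after taking full expectations, a bound of the form
\begin{align*}
\mathbb{E}|w_k|^4 \le (1 - 4\eta\theta_0 + C_{\theta,\kappa,\ell_0^4}\eta^2)\,\mathbb{E}|w_{k-1}|^4 + C_{\theta,\kappa,\ell_0^4}\,\eta.
\end{align*}
The stated restriction $\eta \le \min\{1, \theta_0/(2(10+7\kappa^4+7\ell_0^4))\}$ is exactly what guarantees the coefficient $1 - 4\eta\theta_0 + C\eta^2 \le 1 - 2\eta\theta_0 \in (0,1)$; the constant $10+7\kappa^4+7\ell_0^4$ presumably comes from tracking the explicit combinatorial constants in the expansion of the fourth power, so I would keep the bookkeeping honest enough to land on that threshold.

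Finally I would iterate the one-step bound: with $\rho := 1 - 2\eta\theta_0 \in (0,1)$ and $\mathbb{E}|w_k|^4 \le \rho\,\mathbb{E}|w_{k-1}|^4 + C_{\theta,\kappa,\ell_0^4}\eta$, induction gives
\begin{align*}
\mathbb{E}|w_k|^4 \le \rho^k |w_0|^4 + C_{\theta,\kappa,\ell_0^4}\eta \sum_{i=0}^{k-1}\rho^i \le |w_0|^4 + C_{\theta,\kappa,\ell_0^4}\eta \cdot \frac{1}{1-\rho} = |w_0|^4 + \frac{C_{\theta,\kappa,\ell_0^4}}{2\theta_0},
\end{align*}
which is of the claimed form $|w_0|^4 + C_{\theta,\kappa,\ell_0^4}$ uniformly in $k$. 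The main obstacle I anticipate is purely the algebra: carefully expanding $\big(|w_{k-1}|^2 - 2\eta\langle\cdot\rangle + \eta^2|\cdot|^2\big)^2$, conditioning correctly on $w_{k-1}$ at each occurrence of $\zeta_k$, and tracking constants tightly enough that the contraction coefficient works out under the precise hypothesis on $\eta$ — no single step is deep, but the constant $10+7\kappa^4+7\ell_0^4$ means the estimates cannot be done too wastefully. A secondary point to be careful about is that $|w_0|^4 = |x|^4$ must appear with coefficient exactly $1$ (not $1+\text{something}$), so the $\rho^k \le 1$ bound on the initial term must be used rather than, say, bounding $\rho^k|w_0|^4$ by $|w_0|^4$ plus extra.
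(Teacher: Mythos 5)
Your proposal is correct and follows essentially the same route as the paper's proof: expand the fourth power of the recursion, condition on $w_{k-1}$, use \eqref{integration} plus Young's inequality to extract the negative drift term $-c\,\theta_0\eta\,\mathbb{E}|w_{k-1}|^4$ while absorbing the $\nabla P(0)$ contribution, bound the remaining terms by \eqref{stofo}, and iterate the resulting one-step contraction. The only differences are in constant bookkeeping (the paper retains a contraction factor $1-\theta_0\eta$ rather than $1-2\theta_0\eta$), which does not affect the conclusion.
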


\begin{lemma}\label{fourth}
Let $\hat{X}_{t}$ be the solution to the equation (\ref{SME}). Then, as $\eta\leq\min\{1,\frac{\theta_{0}}{4\kappa^{2}}\}$, for any $t>0,$ we have
\begin{align}\label{moment}
\mathbb{E}|\hat{X}_{t}|^{2}<|x|^{2}+\frac{2|\nabla P(0)|^{2}+2\theta_{0}\ell_{0}^{2}}{\theta_{0}^{2}},
\end{align}
\begin{align}\label{moment-1}
\mathbb{E}|\hat{X}_{t}-x|^{2} \le C_{\theta,\kappa,\ell_{0}^{2}}(1+|x|^{2})(t+\eta)t.
\end{align}
\end{lemma}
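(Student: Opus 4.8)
The plan is to obtain both estimates by applying It\^{o}'s formula to $f(x)=|x|^{2}$, combined with the dissipativity of $\nabla P$ recorded in \eqref{integration} and the linear growth bound on $\Sigma(\cdot)^{1/2}$ in \eqref{stodiffu}. First I would observe that {\bf Assumption A1} makes $\nabla P$ and $\Sigma(\cdot)^{1/2}$ globally Lipschitz, so \eqref{SME} admits a unique strong solution. To keep the computations rigorous I would work up to the stopping time $\tau_{R}=\inf\{t\ge 0:|\hat X_{t}|\ge R\}$, derive the bounds for $\hat X_{t\wedge\tau_{R}}$ where the local martingale part is a genuine $L^{2}$-martingale (its integrand being bounded on $\{|\hat X_{s}|<R\}$), and then let $R\to\infty$ via Fatou's lemma.

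For \eqref{moment}, It\^{o}'s formula gives
\begin{align*}
|\hat X_{t}|^{2}=|x|^{2}-2\int_{0}^{t}\langle \hat X_{s},\nabla P(\hat X_{s})\rangle\,\dif s+2\int_{0}^{t}\langle \hat X_{s},(\eta\Sigma(\hat X_{s}))^{1/2}\dif B_{s}\rangle+\eta\int_{0}^{t}\|\Sigma(\hat X_{s})^{1/2}\|_{\rm HS}^{2}\,\dif s.
\end{align*}
Taking expectations after localisation, using \eqref{integration} with $y=0$ together with Young's inequality to get $\langle x,\nabla P(x)\rangle\ge \frac{\theta_{0}}{2}|x|^{2}-\frac{1}{2\theta_{0}}|\nabla P(0)|^{2}$, and bounding $\|\Sigma(x)^{1/2}\|_{\rm HS}^{2}\le 2\ell_{0}^{2}+2\kappa^{2}|x|^{2}$ by \eqref{stodiffu}, one arrives at the differential inequality
\begin{align*}
\frac{\dif}{\dif t}\,\mathbb{E}|\hat X_{t}|^{2}\le -(\theta_{0}-2\eta\kappa^{2})\,\mathbb{E}|\hat X_{t}|^{2}+\frac{|\nabla P(0)|^{2}}{\theta_{0}}+2\eta\ell_{0}^{2}.
\end{align*}
Since $\eta\le \theta_{0}/(4\kappa^{2})$, the coefficient $\theta_{0}-2\eta\kappa^{2}$ is at least $\theta_{0}/2>0$, and a Gr\"onwall argument together with $\eta\le 1$ then yields $\mathbb{E}|\hat X_{t}|^{2}\le |x|^{2}+\frac{2|\nabla P(0)|^{2}+2\theta_{0}\ell_{0}^{2}}{\theta_{0}^{2}}$ uniformly in $t$.

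For \eqref{moment-1}, I would write $\hat X_{t}-x=-\int_{0}^{t}\nabla P(\hat X_{s})\,\dif s+\int_{0}^{t}(\eta\Sigma(\hat X_{s}))^{1/2}\dif B_{s}$, apply $|a+b|^{2}\le 2|a|^{2}+2|b|^{2}$, Cauchy--Schwarz on the drift term (which produces a factor $t$) and the It\^{o} isometry on the stochastic integral (which produces the factor $\eta$), to get
\begin{align*}
\mathbb{E}|\hat X_{t}-x|^{2}\le 2t\int_{0}^{t}\mathbb{E}|\nabla P(\hat X_{s})|^{2}\,\dif s+2\eta\int_{0}^{t}\mathbb{E}\|\Sigma(\hat X_{s})^{1/2}\|_{\rm HS}^{2}\,\dif s.
\end{align*}
Then \eqref{shiftlin}, \eqref{stodiffu} and the bound \eqref{moment} just established (which gives $\mathbb{E}|\hat X_{s}|^{2}\le C_{\theta}(1+|x|^{2})$) control both integrands by $C_{\theta,\kappa,\ell_{0}^{2}}(1+|x|^{2})$, and integrating over $[0,t]$ yields $\mathbb{E}|\hat X_{t}-x|^{2}\le C_{\theta,\kappa,\ell_{0}^{2}}(1+|x|^{2})(t^{2}+\eta t)$, which is exactly \eqref{moment-1}. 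The computations are otherwise routine; the only genuine care required is the a priori finiteness of the second moment, which is why the localisation argument is needed, and keeping track of the constants so that the Gr\"onwall coefficient stays bounded away from $0$ under $\eta\le\min\{1,\theta_{0}/(4\kappa^{2})\}$.
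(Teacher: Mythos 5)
Your proposal matches the paper's proof in all essentials: both apply It\^{o}'s formula to $|x|^{2}$, combine the dissipativity \eqref{integration} and Young's inequality with the bound \eqref{stodiffu} to obtain a Gr\"onwall-type differential inequality for \eqref{moment}, and both prove \eqref{moment-1} via the same Cauchy--Schwarz/It\^{o}-isometry decomposition together with \eqref{shiftlin}, \eqref{stodiffu} and \eqref{moment}; the localisation by $\tau_{R}$ is extra rigour the paper leaves implicit. The one discrepancy is in constants: your Young split leaves a net decay rate of only $\theta_{0}/2$, which yields $4\ell_{0}^{2}/\theta_{0}$ rather than the stated $2\ell_{0}^{2}/\theta_{0}$ in \eqref{moment} --- to recover the exact constant, use only $\theta_{0}/2$ of the full $2\theta_{0}$ dissipativity for the Young term and another $\theta_{0}/2$ to absorb $2\eta\kappa^{2}\mathbb{E}|\hat X_{s}|^{2}$, as the paper does, keeping a net rate $\theta_{0}$ (this is immaterial for the lemma's later use as a generic $C_{\theta,\kappa,\ell_{0}^{2}}(1+|x|^{2})$ bound).
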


With the help of Malliavin calculus and Bismut's formula, we can obtain the following estimates, which will be
proved in Appendix \ref{Mal}.

\begin{lemma}\label{mainlem1}
Let $\hat{X}_{t}$ be the solution to the equation (\ref{SME}) and denote
$P_{t}h(x)=\mathbb{E}[h(\hat{X}_{t}^{x})]$ for $h\in {\rm Lip(1)}.$ Then, as $\eta\leq\min\{1,\frac{\theta_{0}}{4\theta_{3}^{2}}\}$, for any
$x,v,v_{1},v_{2},v_{3}\in\mathbb{R}^{d}$ and $t>0$, we have
\begin{align}\label{gradient2}
|\nabla_{v}(P_{t}h)(x)|\leq e^{-\frac{\theta_{0}}{8} t}|v|,
\end{align}
\begin{align}\label{sec}
|\nabla_{v_{2}}\nabla_{v_{1}}P_{t}h(x)|\leq
C_{\theta}(1+\frac{\sqrt{d}}{\delta})(1+\frac{1}{\sqrt{\eta t}})e^{-\frac{{\theta_{0}}}{8}t}|v_{1}||v_{2}|,
\end{align}
\begin{align}\label{third}
|\nabla_{v_{3}}\nabla_{v_{2}}\nabla_{v_{1}}P_{t}h(x)|\leq
C_{\theta}(1+\frac{d}{\delta^{2}})\big(1+\frac{1}{\eta t}+\frac{1}{t^{\frac{5}{4}}}\big)e^{-\frac{\theta_{0}}{8} t}|v_{1}||v_{2}||v_{3}|.
\end{align}
\end{lemma}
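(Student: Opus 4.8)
The target is Lemma~\ref{mainlem1}, the regularity estimates \eqref{gradient2}--\eqref{third} for the semigroup $P_t h(x)=\E[h(\hat X_t^x)]$ of the SDE \eqref{SME}. The strategy is the standard Malliavin-calculus route: differentiate the flow $x\mapsto \hat X_t^x$, control the Jacobian (and its higher variations) using the dissipativity in \textbf{Assumption A1}(i), and then convert the raw derivative $\nabla^k P_t h$ into an integral that trades a derivative on $h$ for a $1/\sqrt{\eta t}$-type singularity via Bismut's integration-by-parts formula. I would set $J_t^{x} := \nabla_v \hat X_t^x$ (the first variation process), which solves the linearized SDE $\dif J_t = -\nabla^2 P(\hat X_t) J_t\,\dif t + \sum_i \nabla_{J_t}(\eta\Sigma(\hat X_t))^{1/2}\,\dif B_t^i$, and similarly the second and third variation processes $J^{(2)}, J^{(3)}$ by differentiating once and twice more. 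The ellipticity \textbf{Assumption A1}(ii) ($\xi^T\Sigma(x)^{1/2}\xi \ge \delta|\xi|^2$) guarantees the Malliavin matrix is invertible with the quantitative control that produces the $\sqrt d/\delta$ and $d/\delta^2$ factors.

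\textbf{Step 1: the first derivative.} I would show \eqref{gradient2} by an energy estimate on $|J_t^x|^2$. Applying It\^o's formula and using $\langle v_1, \nabla_{v_1}\nabla P(x)\rangle \ge \theta_0|v_1|^2$ from \eqref{diss} for the drift term and the HS-bound $\|\nabla_{v_1}\Sigma(x)^{1/2}\|_{\rm HS}\le\theta_3|v_1|$ from \eqref{expofir} for the diffusion (Itô correction) term, one gets $\frac{\dif}{\dif t}\E|J_t^x|^2 \le (-2\theta_0 + \eta\theta_3^2 C)\E|J_t^x|^2$; under $\eta \le \theta_0/(4\theta_3^2)$ the exponent is $\le -\theta_0$, so $\E|J_t^x|^2 \le e^{-\theta_0 t}|v|^2$, and since $\nabla_v P_t h(x) = \E[\nabla h(\hat X_t^x)\cdot J_t^x]$ with $|\nabla h|\le 1$ (as $h\in{\rm Lip}(1)$), Cauchy--Schwarz gives $|\nabla_v P_t h(x)| \le e^{-\theta_0 t/2}|v|$; the stated $e^{-\theta_0 t/8}$ is just a looser bound that buys room in later steps.

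\textbf{Step 2: Bismut's formula for the second derivative.} For \eqref{sec} I cannot differentiate $h$ twice, so I would use Bismut's formula on the interval $[0,t/2]$: write $P_t h = P_{t/2}(P_{t/2}h)$, apply the integration-by-parts identity
\[
\nabla_{v_2}\nabla_{v_1} P_{t} h(x) = \frac1{t/2}\,\E\Big[\nabla_{v_1}(P_{t/2}h)(\hat X_{t/2}^x)\int_0^{t/2}\langle (\eta\Sigma(\hat X_s^x))^{-1/2} J_s^{(1)} \cdot(\cdots), \dif B_s\rangle\Big] + (\text{second-variation term}),
\]
where the Malliavin weight involves $(\eta\Sigma)^{-1/2}$, hence the $1/\sqrt{\eta}$, and the $1/\sqrt t$ comes from the $1/(t/2)$ prefactor partially absorbed by the Brownian integral's $\sqrt{t/2}$ fluctuation. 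Bounding the weight uses \textbf{Assumption A1}(ii) to control $\|(\eta\Sigma)^{-1/2}\|_{\rm op}\le (\eta\delta)^{-1/2}$; taking HS-to-op norm conversions \eqref{op} yields the $\sqrt d/\delta$. On the first factor I use Step~1's bound $|\nabla_{v_1}P_{t/2}h|\le e^{-\theta_0 t/16}|v_1|$, which supplies the exponential decay; collecting everything gives \eqref{sec}. The $(1+1/\sqrt{\eta t})$ shape (rather than pure $1/\sqrt{\eta t}$) comes from also estimating directly via Step~1 when $\eta t \ge 1$.

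\textbf{Step 3: the third derivative.} \eqref{third} is the same scheme iterated once more: split $[0,t]$ into three pieces, apply Bismut twice (or Bismut once plus a direct second derivative on the already-smoothed semigroup), which generates two Malliavin weights (hence $1/(\eta t)$) plus contributions of the second- and third-variation processes $J^{(2)}, J^{(3)}$; their moment bounds again follow by energy estimates from \eqref{diss}, \eqref{onthird}, \eqref{expofir} and the third-order bound on $\Sigma^{1/2}$, all with the dissipative exponential. The extra $1/t^{5/4}$ term is the residue of splitting a fractional power of the time interval when balancing the three sub-intervals — one takes, say, lengths $\propto t$, and one factor scales like $t^{-1/4}$ against a $t^{-1}$, giving $t^{-5/4}$. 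The factor $d/\delta^2$ is two copies of $\sqrt d/\delta$ from the two inverse-covariance weights.

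\textbf{Main obstacle.} The routine part is the It\^o energy estimates for $J_t, J_t^{(2)}, J_t^{(3)}$; the genuinely delicate part is assembling Bismut's formula with the correct choice of Malliavin weights so that the singularities are exactly $\eta^{-1/2} t^{-1/2}$ (second order) and $\eta^{-1} t^{-1}$ plus $t^{-5/4}$ (third order) \emph{and} the dissipativity still yields a global-in-time exponential factor $e^{-c\theta_0 t}$ uniformly — in particular ensuring that when the weight is supported on an interval of length $\sim t$, the exponential decay is not destroyed. Keeping track of the $d$- and $\delta$-dependence through all the HS/op-norm conversions \eqref{op} is the bookkeeping cost; the conceptual cost is the interplay between the small parameter $\eta$ in the noise and the time singularity, which is why the condition $\eta\le\theta_0/(4\theta_3^2)$ is needed to keep the variation processes contractive. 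These computations are deferred to Appendix~\ref{Mal}.
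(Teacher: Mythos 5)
Your plan follows the same core strategy as the paper's proof in Appendix \ref{Mal}: It\^{o} energy estimates for the first, second and third variation processes driven by the dissipativity \eqref{diss} and the bounds \eqref{expofir}, combined with Bismut's integration-by-parts formula and the ellipticity of $\Sigma^{1/2}$ to trade derivatives of $h$ for singular Malliavin weights carrying the $\sqrt{d}/\delta$ and $1/\sqrt{\eta t}$ factors. The one genuine organizational difference is how the global-in-time exponential decay is preserved: you propose splitting $P_t h=P_{t/2}(P_{t/2}h)$ (and into three pieces at third order) so that the Bismut weight lives on a sub-interval while the decay is supplied by the first-derivative bound on the remaining semigroup; the paper instead applies Bismut over the \emph{whole} interval $[0,t]$ with the weight $u(s)=\frac{1}{t}\eta^{-1/2}\sigma(\hat X_s^x)^{-1}\nabla_v\hat X_s^x$, and the decay survives because the variation process sitting inside both the test-function factor and the weight itself decays exponentially. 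This is also why the paper bounds $\E|\nabla_v\hat X_t^x|^8$ rather than the second moment: the $e^{-\theta_0 t/8}$ in \eqref{gradient2} is the eighth root of that bound (not merely a loosened $e^{-\theta_0 t/2}$), and the spare integrability is what lets repeated Cauchy--Schwarz at second and third order still leave exponential factors. Both routes work; yours is the more standard short-time parabolic scheme, while the paper's avoids the chain-rule bookkeeping of composing semigroups. Two smaller corrections: since $h$ is only Lipschitz you cannot write $\nabla^2 h$ even once, so a mollification $h_\epsilon=h*f_\epsilon$ is needed before applying the product rule and Bismut (the paper does this explicitly and passes to the limit by dominated convergence); and your heuristic for the $t^{-5/4}$ term in \eqref{third} does not match the actual mechanism --- in the paper it arises from the moment bound $\E|D_{U_3}D_{U_2}\nabla_{v_1}\hat X_t^x|^2\le C_\theta(1+\frac{d^2}{\delta^4})(1+t^{-5/2})e^{-\theta_0 t/4}$ on the iterated Malliavin derivative entering the remainder $\mathcal R_{v_1,v_2}^x(t)=\nabla_{v_2}\nabla_{v_1}\hat X_t^x-D_{U_2}\nabla_{v_1}\hat X_t^x$, not from balancing lengths of sub-intervals.
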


In order to use Theorem \ref{mainthm} to solve the problem, we also need the following lemma, which will be proved in Appendix \ref{s:A1}.

\begin{lemma}\label{SGDtaylor}
Let $X_{t}=\hat{X}_{\eta t}$ and $Y_{k}=w_{k}$. Denote $u_k(x)=\E h(X^x_{k})$ for $1\leq k\leq N.$ Then, as $\eta\leq\min\{1,\frac{\theta_{0}}{2(10+7\kappa^{4}+7\ell^{4}_{0})}\}$, we have
\begin{align*}
&\left|\mathbb{E}\int_{0}^{1}\big[\mathcal{A}^{X}u_{k}(X_{s}^{x})-\mathcal{A}^{Y} u_{k}(x)\big]\dif s\right|\\
\leq&C_{\theta,\kappa,\ell^3_{0}}(1+|x|^{3})(1+\frac{d}{\delta^{2}})\big(1+\frac{1}{\eta k}+\frac{\eta}{(\eta k)^{\frac{5}{4}}}\big)\eta^{2}e^{-\frac{\theta_{0}}{8} \eta k}.
\end{align*}
\end{lemma}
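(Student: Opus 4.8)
The plan is to recognise the quantity being bounded as a single one-step ``swap error'' and to expand the two one-step transitions to second order around $x$. By condition~(3) of Theorem~\ref{mainthm}, together with the fact that $\mathcal A^{X}=\eta\mathcal L$ (since $X_t=\hat X_{\eta t}$, $\mathcal L$ being the generator of $\hat X$) and $\mathcal A^{Y}u_k(x)=\E u_k(Y_1^x)-u_k(x)$, the left-hand side equals
\[
\E u_k(\hat X_\eta^x)-\E u_k(Y_1^x)=\big[\E u_k(\hat X_\eta^x)-u_k(x)\big]-\big[\E u_k(Y_1^x)-u_k(x)\big],
\]
i.e.\ the difference between one step of the time-rescaled diffusion (run for physical time $\eta$) and one SGD step $x\mapsto x-\eta\nabla\psi(x,\zeta)$, both read off through the transported test function $u_k=P_{\eta k}h$.

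For the SGD step I would use a third-order Taylor expansion of $u_k$ at $x$ with increment $\eta\nabla\psi(x,\zeta)$ and then average over the sample; using $\E\nabla\psi(x,\zeta)=\nabla P(x)$ and $\E[\nabla\psi(x,\zeta)\nabla\psi(x,\zeta)^{\mathrm T}]=\Sigma(x)+\nabla P(x)\nabla P(x)^{\mathrm T}$ this gives
\[
\E u_k(Y_1^x)-u_k(x)=-\eta\langle\nabla P(x),\nabla u_k(x)\rangle+\tfrac{\eta^2}{2}\big\langle\Sigma(x)+\nabla P(x)\nabla P(x)^{\mathrm T},\nabla^2u_k(x)\big\rangle_{\mathrm{HS}}+\mathcal R^{Y},
\]
with $|\mathcal R^{Y}|\le\tfrac{\eta^3}{6}\|\nabla^3u_k\|\,\E|\nabla\psi(x,\zeta)|^3$. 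For the diffusion step I would write $\E u_k(\hat X_\eta^x)-u_k(x)=\E\int_0^\eta\mathcal L u_k(\hat X_s^x)\,\dif s$ and split $\mathcal L u_k=-\langle\nabla P,\nabla u_k\rangle+\tfrac{\eta}{2}\langle\Sigma,\nabla^2u_k\rangle_{\mathrm{HS}}=:\mathcal L_1u_k+\mathcal L_2u_k$. Since $\mathcal L_1u_k\in C^2$ and $\mathcal L_2u_k\in C^1$ (by Lemma~\ref{mainlem1} and Assumption~A1), I would Taylor-expand $\mathcal L_1u_k(\hat X_s^x)$ to first order around $x$ (the remainder involving only $\nabla^2(\mathcal L_1u_k)$, hence only $\nabla^{\le3}u_k$) and $\mathcal L_2u_k(\hat X_s^x)$ to zeroth order (the remainder involving $\nabla(\mathcal L_2u_k)$, again only $\nabla^{\le3}u_k$, and carrying an extra factor $\eta$). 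Using $\E[\hat X_s^x-x]=-\int_0^s\E\nabla P(\hat X_r^x)\,\dif r=-s\,\nabla P(x)+O\!\big(\int_0^s\E|\hat X_r^x-x|\,\dif r\big)$ and the identity $\nabla_{\nabla P}\langle\nabla P,\nabla u_k\rangle=\langle\nabla^2P\cdot\nabla P,\nabla u_k\rangle+\langle\nabla^2u_k,\nabla P\nabla P^{\mathrm T}\rangle_{\mathrm{HS}}$, the diffusion step acquires exactly the same two leading terms as the SGD step, the same second-moment term $\tfrac{\eta^2}{2}\langle\nabla P\nabla P^{\mathrm T},\nabla^2u_k\rangle_{\mathrm{HS}}$, one extra term $\tfrac{\eta^2}{2}\langle\nabla^2P\cdot\nabla P,\nabla u_k\rangle$, and a remainder $\mathcal R^{X}$. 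Subtracting, all the leading terms cancel and
\[
\E u_k(\hat X_\eta^x)-\E u_k(Y_1^x)=\tfrac{\eta^2}{2}\big\langle\nabla^2P(x)\cdot\nabla P(x),\nabla u_k(x)\big\rangle+\mathcal R^{X}-\mathcal R^{Y}.
\]

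It then remains to estimate the three surviving pieces. The explicit term is $\le\tfrac{\eta^2}{2}\|\nabla^2P\|\,|\nabla P(x)|\,\|\nabla u_k\|\lesssim(1+|x|)\eta^2e^{-\theta_0\eta k/8}$ by \eqref{Lip}, \eqref{shiftlin} and \eqref{gradient2}. For $\mathcal R^{Y}$ I would apply \eqref{third} at $t=\eta k$ (so $\tfrac1{\eta t}=\tfrac1{\eta^2k}$, $\tfrac1{t^{5/4}}=\tfrac1{(\eta k)^{5/4}}$) and \eqref{stofo}, obtaining $|\mathcal R^{Y}|\lesssim(1+|x|^3)(1+\tfrac d{\delta^2})\eta^3\big(1+\tfrac1{\eta^2k}+\tfrac1{(\eta k)^{5/4}}\big)e^{-\theta_0\eta k/8}$. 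For $\mathcal R^{X}$ I would bound the Taylor remainders of $\mathcal L_1u_k$ and $\mathcal L_2u_k$ via $\|\nabla^2(\mathcal L_1u_k)\|\lesssim(1+|x|)\|\nabla^{\le3}u_k\|$ and $\|\nabla(\mathcal L_2u_k)\|\lesssim\eta(1+|x|^2)\|\nabla^{\le3}u_k\|$ (using \eqref{diss}, \eqref{onthird}, \eqref{expofir}, \eqref{shiftlin}, \eqref{stodiffu}), together with the moment estimates \eqref{moment}, \eqref{moment-1} of Lemma~\ref{fourth} and their routine higher-order analogues (in particular $\E|\hat X_s^x-x|^2\lesssim(1+|x|^2)\eta s$ and $\E|\hat X_s^x-x|^3\lesssim(1+|x|^3)(\eta s)^{3/2}$ for $s\le\eta$), and the elementary integrals $\int_0^\eta\E|\hat X_s^x-x|\,\dif s\lesssim(1+|x|)\eta^2$, $\int_0^\eta\E|\hat X_s^x-x|^2\,\dif s\lesssim(1+|x|^2)\eta^3$; this yields $|\mathcal R^{X}|\lesssim(1+|x|^3)(1+\tfrac d{\delta^2})\eta^3\big(1+\tfrac1{\eta\sqrt k}+\tfrac1{\eta^2k}+\tfrac1{(\eta k)^{5/4}}\big)e^{-\theta_0\eta k/8}$. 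Collapsing all contributions using $k\ge1$ — $\eta^3\le\eta^2$, $\tfrac{\eta^3}{\eta\sqrt k}=\tfrac{\eta^2}{\sqrt k}\le\eta^2$, $\tfrac{\eta^3}{\eta^2k}=\tfrac{\eta^2}{\eta k}$, $\tfrac{\eta^3}{(\eta k)^{5/4}}=\eta^2\cdot\tfrac{\eta}{(\eta k)^{5/4}}$ — produces exactly the claimed $C_{\theta,\kappa,\ell_0^3}(1+|x|^3)(1+\tfrac d{\delta^2})\big(1+\tfrac1{\eta k}+\tfrac{\eta}{(\eta k)^{5/4}}\big)\eta^2e^{-\theta_0\eta k/8}$.

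The step I expect to be the main obstacle is the cancellation of the $\tfrac{\eta^2}{2}\langle\nabla P\nabla P^{\mathrm T},\nabla^2u_k\rangle_{\mathrm{HS}}$ terms. By \eqref{sec}, $\|\nabla^2u_k\|$ can be as large as $1/(\eta\sqrt k)$, so this term is of size $\sim\eta/\sqrt k$, far larger than the target $\eta/k$, and cannot be estimated crudely; it must be shown to disappear. On the SGD side it enters through the one-step second moment $\Sigma+\nabla P\nabla P^{\mathrm T}$, while on the diffusion side it is generated by the $O(s)$ drift correction to $\E[\hat X_s^x-x]$ contracted against $\mathcal L_1u_k$, which is precisely why the drift part must be expanded to first (rather than zeroth) order in space. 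The accompanying difficulty is that only $\nabla^{\le3}u_k$ is available (Lemma~\ref{mainlem1} stops at the third derivative): the expansions must be arranged so that the drift part $\mathcal L_1u_k$ is taken one order higher than the diffusion part $\mathcal L_2u_k$, whose would-be first-order term is harmlessly $O(\eta^3)$ thanks to the extra $\eta$ in $\mathcal L_2$, so that no remainder ever involves $\nabla^4u_k$; one also needs moment control on $\hat X_s^x-x$ slightly beyond what is literally stated in Lemma~\ref{fourth}, which follows by the same BDG/Gronwall arguments used there.
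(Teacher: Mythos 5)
Your proposal is correct and follows essentially the same route as the paper: rewrite the left-hand side as the difference of the two one-step transitions applied to $u_k$, Taylor-expand the SGD step to second order with a $\nabla^3 u_k$ remainder, expand the drift part of the diffusion generator to first order in space (which is exactly how the paper's term $\mathcal{J}_{12}$ produces the cancelling $\tfrac{\eta^2}{2}\langle\nabla^2u_k,\nabla P\nabla P^{\mathrm T}\rangle_{\mathrm{HS}}$ contribution that you rightly single out as the crux) and the diffusion part to zeroth order, then close with Lemmas \ref{fourth} and \ref{mainlem1}. The only cosmetic difference is that you isolate the explicit $\tfrac{\eta^2}{2}\langle\nabla^2P\cdot\nabla P,\nabla u_k\rangle$ term where the paper absorbs it into a Lipschitz bound on $\mathbb{E}\int_0^\eta\langle\nabla u_k(\hat X_s^x),\nabla P(\hat X_s^x)-\nabla P(x)\rangle\,\dif s$, and the paper gets by with second moments of $\hat X_s^x-x$ only.
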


With the above results, we can give the proof of Theorem \ref{SGDthm}.\\
{\bf {\it Proof of Theorem \ref{SGDthm}.}}
In order to apply Theorem \ref{mainthm},
we need to identify the $X_{t}$ and $Y_{k}$ therein in our setting and compute the corresponding $\mcl A^{X}$ and
$\mcl A^{Y}$. Let $X_{t}=\hat{X}_{\eta t}$, $Y_{k}=w_{k}$, $X_{0}=Y_{0}=x \in \R^{d}$ and $N\geq2$. {Then, $u_{k}(x)=\mathbb{E}h(X_{k}^{x})=\mathbb{E}h(\hat{X}_{\eta k}^{x})$ for $k\geq1$. Notice that $h\in{\rm Lip}(1)$, the Cauchy-Schwarz inequality and (\ref{moment}) imply $\mathbb{E}|h(X_{t}^{x})|\leq |h(0)|+\sqrt{\mathbb{E}|\hat{X}_{\eta t}|^{2}}<\infty$; Similarly, we can derive $\mathbb{E}|h(Y_{k}^{y})|<\infty$, that is, the condition (1) holds. In addition, Lemma \ref{mainlem1} implies that the function $u_{k}$ has bounded 1st, 2nd and 3rd order derivatives. Hence, by the It$\hat{o}$'s formula for SDE (see, e.g., \cite[Theorem 4.2.1]{Oks13}), we can see that the conditions (2) and (3) in Theorem \ref{mainthm} is satisfied.

Now we apply Theorem \ref{mainthm} to prove the theorem,} it suffices to bound the two terms $\mathcal{I}_{h}$ and $\mathcal{II}_{h}$ in (\ref{e:mainnew}). For the term $\mathcal{I}_{h}$, by Lemma \ref{SGDtaylor} and (\ref{SGDM}), we have
\begin{align*}
|\mathcal{I}_{h}|\leq&C_{\theta,\kappa,\ell^3_{0}}\sum_{j=1}^{N-1}(1+\mathbb{E}|Y_{j-1}|^{3})(1+\frac{d}{\delta^{2}})\big(1+\frac{1}{\eta(N-j)}+\frac{\eta}{[\eta(N-j)]^{\frac{5}{4}}}\big)e^{-\frac{\theta_{0}}{8} \eta(N-j)}\eta^{2}\\
\leq&C_{\theta,\kappa,\ell^4_{0}}(1+|x|^{3})(1+\frac{d}{\delta^{2}})\eta^{2}\sum_{j=1}^{N-1}\big(1+\frac{1}{\eta j}+\frac{\eta}{[\eta j]^{\frac{5}{4}}}\big)e^{-\frac{\theta_{0}}{8} \eta j}\\
\leq&C_{\theta,\kappa,\ell^4_{0}}(1+|x|^{3})(1+\frac{d}{\delta^{2}})\eta^{2}\Big[1+\frac{1}{\eta}+\frac{1}{\eta^{\frac{1}{4}}}+\int_{1}^{N}\big(1+\frac{1}{\eta v}+\frac{\eta}{[\eta v]^{\frac{5}{4}}}\big)e^{-\frac{\theta_{0}}{8} \eta v}\dif v\Big].
\end{align*}
Then, since $\eta\leq1$, it is straightforward to calculate
\begin{align*}
|\mathcal{I}_{h}|\leq&C_{\theta,\kappa,\ell^4_{0}}(1+|x|^{3})(1+\frac{d}{\delta^{2}})\eta\Big[\eta+1+\eta^{\frac{3}{4}}+\eta\int_{1}^{N}\big(1+\frac{1}{\eta v}+\frac{\eta}{[\eta v]^{\frac{5}{4}}}\big)e^{-\frac{\theta_{0}}{8} \eta v}\dif v\Big]\\
\leq&C_{\theta,\kappa,\ell^4_{0}}(1+|x|^{3})(1+\frac{d}{\delta^{2}})\eta\Big[1+\int_{\eta}^{\eta N}\big(1+\frac{1}{v}+\frac{\eta}{v^{\frac{5}{4}}}\big)e^{-\frac{\theta_{0}}{8}v}\dif v\Big]\\
\leq&C_{\theta,\kappa,\ell^4_{0}}(1+|x|^{3})(1+\frac{d}{\delta^{2}})(1+|\ln \eta|)\eta.
\end{align*}

For the term $\mathcal{II}_{h}$, by the Cauchy-Schwarz inequality, (\ref{moment-1}) and (\ref{SGDM}), we have
\begin{align*}
\mathbb{E}|h\big(X_{1}^{Y_{N-1}}\big)-h(Y_{N-1})|\leq&\mathbb{E}|\hat{X}_{\eta}^{w_{N-1}}-w_{N-1}|\leq C_{\theta,\kappa,\ell_{0}^{4}}(1+|x|)\eta.
\end{align*}
Recall (\ref{SGD}), by Cauchy-Schwarz inequality, (\ref{stofo}) and (\ref{SGDM}), we have
\begin{align*}
\mathbb{E}|h(Y_{N})-h(Y_{N-1})|\leq&\mathbb{E}|w_{N}-w_{N-1}|\leq\eta\mathbb{E}\left|\nabla\psi(w_{N-1},\zeta_{N})\right|\leq C_{\theta,\kappa,\ell_{0}^{4}}(1+|x|)\eta.
\end{align*}
These imply
\begin{align*}
|\mathcal{II}_{h}|\leq C_{\theta,\kappa,\ell_{0}^{4}}(1+|x|)\eta.
\end{align*}

Combining all of above, we have
\begin{align*}
d_W(\mathcal{L}(\hat{X}_{\eta N}),\mathcal{L}(w_{N}))\leq C_{\theta,\kappa,\ell^4_{0}}(1+|x|^{3})(1+\frac{d}{\delta^{2}})(1+|\ln \eta|)\eta.
\end{align*}

\qed

\subsection{Proof of Theorem \ref{stablethm}}\label{EMPROOF}
We first give the following upper bounds of the processes $\tilde{Y}_{k}$ and $\tilde{X}_{t}$, which will be proved in Appendix \ref{stable application}.

\begin{lemma}\label{EMmoment}
Let $\tilde{Y}_{k}$ be defined by (\ref{E-M}) with $\tilde{Y}_{0}=x\in\mathbb{R}^{d}$. Then, as $\eta\in(0,1]$, we have
\begin{align*}
\mathbb{E}|\tilde{Y}_{k}|\leq C_{\alpha,d}(1+|x|).
\end{align*}
\end{lemma}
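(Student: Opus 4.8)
The plan is to iterate the recursion \eqref{E-M} and track $\E|\tilde Y_k|$ via a linear contraction argument, exactly parallel to the standard a priori bound for discretized OU-type dynamics. Taking norms in \eqref{E-M} and using the triangle inequality gives
\[
|\tilde Y_{k+1}| \le \Big|1-\frac{\eta}{\alpha}\Big|\,|\tilde Y_k| + \frac{\eta^{1/\alpha}}{\sigma}\,|\tilde Z_{k+1}|.
\]
Since $\alpha\in(1,2)$ and $\eta\in(0,1]$ we have $0 < 1-\eta/\alpha < 1$, so the coefficient in front of $|\tilde Y_k|$ is a genuine contraction factor $1-\eta/\alpha$. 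The key moment input is that $\E|\tilde Z_1|<\infty$: from the Pareto density \eqref{density}, $\E|\tilde Z_1| = \int_{|z|>1} |z|\, p(z)\,\dif z$ is finite precisely because $\alpha>1$ (the tail $|z|^{-\alpha-d}$ times the surface factor $|z|^{d-1}$ and the extra $|z|$ gives an integrable $|z|^{-\alpha}$ on $(1,\infty)$), and its value is $\frac{\alpha}{\alpha-1}$, a constant $C_{\alpha,d}$. First I would record this, noting $\tilde Z_{k+1}$ is independent of $\tilde Y_k$, hence taking expectations yields
\[
\E|\tilde Y_{k+1}| \le \Big(1-\frac{\eta}{\alpha}\Big)\E|\tilde Y_k| + \frac{\eta^{1/\alpha}}{\sigma}\,C_{\alpha,d}.
\]

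Next I would solve this linear recursion. Setting $a_k=\E|\tilde Y_k|$, $r=1-\eta/\alpha$, $b=\sigma^{-1}\eta^{1/\alpha}C_{\alpha,d}$, induction gives $a_k \le r^k a_0 + b\,\frac{1-r^k}{1-r} \le a_0 + \frac{b}{1-r}$. Now $1-r = \eta/\alpha$, so $\frac{b}{1-r} = \frac{\alpha}{\eta}\cdot\frac{\eta^{1/\alpha}}{\sigma}C_{\alpha,d} = \frac{\alpha C_{\alpha,d}}{\sigma}\,\eta^{1/\alpha - 1}$. The only point needing a little care is that $1/\alpha - 1 < 0$, so $\eta^{1/\alpha-1}$ is \emph{not} bounded as $\eta\to 0$; I must check this does not break the claimed bound $C_{\alpha,d}(1+|x|)$ which has no $\eta$-dependence. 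Here the constant $\sigma = \big(\tfrac{\alpha}{V(\mathbb S^{d-1})d_\alpha}\big)^{1/\alpha}$ is $\eta$-free, so I need to re-examine — and indeed the resolution is that one should \emph{not} bound $b/(1-r)$ crudely; instead one keeps the geometric sum as $b\sum_{j=0}^{k-1} r^j$ and, since each term $r^j \le 1$ but we really want the sum to telescope against the step size, the correct normalization is that $\eta^{1/\alpha}\tilde Z$ plays the role of the increment of the $\alpha$-stable process over a time-$\eta$ interval, whose mean absolute value scales like $\eta^{1/\alpha}$; summing $k\sim T/\eta$ such increments against the contraction $(1-\eta/\alpha)^{k-j}$ gives a convergent bound. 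Concretely, $\sum_{j\ge 0}(1-\eta/\alpha)^j \eta^{1/\alpha} \le \frac{\alpha}{\eta}\eta^{1/\alpha}$, and for this to be $O(1)$ one needs the extra observation that the OU scaling was chosen so that stationarity holds; thus the sharp statement comes from comparing with the stationary law, whose first moment is the finite constant $\sigma^{-1}(\E|Z_1^\alpha\text{-type}|)$ independent of $\eta$. I would therefore instead argue: the bound $\E|\tilde Y_k|\le |x| + \sup_k \E|\tilde Y_k - \E[\tilde Y_k | \tilde Y_0]|$ plus the deterministic part $(1-\eta/\alpha)^k|x|\le|x|$, and control the fluctuation uniformly.

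The main obstacle, then, is exactly this $\eta$-uniformity: a naive Gronwall/geometric-sum estimate produces a factor $\eta^{1/\alpha-1}\to\infty$, so the proof must exploit the specific coupling of the drift coefficient $1/\alpha$, the noise scaling $\eta^{1/\alpha}/\sigma$, and the normalizing constant $\sigma$ (chosen precisely so that \eqref{E-M} is a consistent discretization of \eqref{SDE}, whose stationary distribution has a finite first moment for $\alpha>1$). Once that scaling bookkeeping is done correctly — i.e.\ recognizing $\sum_{j=0}^{k-1}(1-\eta/\alpha)^j \cdot \frac{\eta^{1/\alpha}}{\sigma}C_{\alpha,d}$ is a Riemann-sum approximation to $\int_0^{\eta k} e^{-s/\alpha}\,\dif(\text{stable})$ in $L^1$, which is bounded by a constant depending only on $\alpha,d$ — the remaining steps (the norm inequality, independence, the induction) are routine. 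I would conclude $\E|\tilde Y_k| \le |x| + C_{\alpha,d} \le C_{\alpha,d}(1+|x|)$ for all $k\ge 1$ and all $\eta\in(0,1]$, as claimed, with the details of the Pareto moment computation and the geometric-sum estimate deferred to Appendix \ref{stable application}.
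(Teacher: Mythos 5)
There is a genuine gap at exactly the point you flag as ``the main obstacle.'' You correctly observe that the naive triangle-inequality recursion
\[
\E|\tilde Y_{k+1}|\le \Bigl(1-\tfrac{\eta}{\alpha}\Bigr)\E|\tilde Y_k|+\tfrac{\eta^{1/\alpha}}{\sigma}\,\E|\tilde Z_1|
\]
yields only $|x|+C\,\eta^{1/\alpha-1}$, which blows up as $\eta\to0$. But the mechanisms you then invoke to repair it do not constitute a proof. Summing the $L^{1}$ norms of the weighted increments $(1-\eta/\alpha)^{k-j}\eta^{1/\alpha}\tilde Z_j/\sigma$ against the contraction is \emph{exactly} the divergent computation, so ``summing $k\sim T/\eta$ such increments against the contraction gives a convergent bound'' is false as stated; and the claim that the geometric sum ``is a Riemann-sum approximation to $\int_0^{\eta k}e^{-s/\alpha}\,\dif(\text{stable})$ in $L^1$, which is bounded by a constant'' is both inaccurate (the $\tilde Z_j$ are Pareto, not stable, so no exact stable scaling identity is available) and circular (the uniform $L^1$ bound on that weighted sum is precisely what must be proved). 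Nowhere do you identify the actual source of the cancellation: the Pareto law is rotationally symmetric, so the increments are mean zero, and it is this symmetry --- not stable scaling or stationarity --- that upgrades the per-step noise cost from $O(\eta^{1/\alpha})$ to $O(\eta)$.

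The paper's proof supplies the missing mechanism via the smooth Lyapunov function $V(y)=(1+|y|^2)^{1/2}$, with $|\nabla V|\le1$ and $\|\nabla^2V\|_{\rm HS}\le2\sqrt d$. Writing the noise contribution as $\int_0^{\eta^{1/\alpha}/\sigma}\langle\nabla V(\cdot+r\tilde Z),\tilde Z\rangle\,\dif r$, it splits the jump space at $|z|=\eta^{-1/\alpha}$: on the small-jump region the first-order term $\langle\nabla V((1-\eta/\alpha)y),z\rangle$ integrates to zero against the symmetric density, leaving a second-order remainder controlled by $\|\nabla^2V\|_{\rm HS}|z|^2$, whose integral against $|z|^{-\alpha-d}$ over $1\le|z|\le\eta^{-1/\alpha}$ is of order $\eta^{(\alpha-2)/\alpha}$; on the large-jump region a first-order bound gives order $\eta^{(\alpha-1)/\alpha}$. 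After integrating $r$ over an interval of length $\eta^{1/\alpha}/\sigma$, both contributions are $O(\eta)$ per step, matching the $-\frac{\eta}{2\alpha}V$ drift contraction, and the geometric sum then closes uniformly in $\eta$. Equivalently, one could truncate $\tilde Z_j$ at level $\eta^{-1/\alpha}$ and use mean-zero-plus-second-moment for the small part and first moment for the large part; either way, the symmetry and the truncation at $\eta^{-1/\alpha}$ are the essential ingredients, and your proposal contains neither.
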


\begin{lemma}\label{SDEM}
Let $\tilde{X}_{t}$ be the solution to the equation (\ref{SDE}). Then, for any $x\in\mathbb{R}^{d}$ and $t>0$, we have
\begin{align}\label{SDEmoment1}
\mathbb{E}|\tilde{X}_{t}|\leq C_{\alpha,d}(1+|x|),
\end{align}
\begin{align}\label{SDEM2}
\mathbb{E}|\tilde{X}_{t}^{x}-x|\leq C_{\alpha,d}(1+|x|)(t+t^{\frac{1}{\alpha}}).
\end{align}
\end{lemma}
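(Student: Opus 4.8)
The plan is to exploit the linearity of the SDE (\ref{SDE}): since the drift is linear and the noise additive, the Ornstein--Uhlenbeck process admits the explicit variation-of-constants representation
\[
\tilde X_t^x \;=\; e^{-t/\alpha}\, x \;+\; \int_0^t e^{-(t-s)/\alpha}\,\dif Z_s, \qquad t\ge 0,
\]
which one checks by applying It\^o's formula to $e^{t/\alpha}\tilde X_t$. Both estimates will then be read off from this formula together with two standard facts about the driving noise. First, since $\alpha\in(1,2)$, the rotationally symmetric $\alpha$-stable vector $Z_1$ has a finite first moment, $\E|Z_1|=:c_{\alpha,d}<\infty$ (moments of order $p<\alpha$ are finite; see, e.g., \cite{Sat99}). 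Second, for deterministic $f\in L^\alpha([0,t])$ the integral $\int_0^t f(s)\,\dif Z_s$ is again rotationally symmetric $\alpha$-stable and satisfies $\int_0^t f(s)\,\dif Z_s\stackrel{\rm d}{=}\big(\int_0^t|f(s)|^\alpha\,\dif s\big)^{1/\alpha}Z_1$, which follows from the stationary independent increments and the characteristic function $\E[e^{i\langle Z_t,\lambda\rangle}]=e^{-t|\lambda|^\alpha}$. Taking $f(s)=e^{-(t-s)/\alpha}$ gives $\int_0^t|f(s)|^\alpha\,\dif s=1-e^{-t}$, hence $\E\big|\int_0^t e^{-(t-s)/\alpha}\,\dif Z_s\big|=(1-e^{-t})^{1/\alpha}c_{\alpha,d}$.

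For (\ref{SDEmoment1}), the triangle inequality applied to the representation gives $\E|\tilde X_t^x|\le e^{-t/\alpha}|x|+(1-e^{-t})^{1/\alpha}c_{\alpha,d}\le|x|+c_{\alpha,d}\le C_{\alpha,d}(1+|x|)$, uniformly in $t>0$. For (\ref{SDEM2}), I would write $\tilde X_t^x-x=(e^{-t/\alpha}-1)x+\int_0^t e^{-(t-s)/\alpha}\,\dif Z_s$ and use $1-e^{-u}\le u$ for $u\ge0$ twice: $|e^{-t/\alpha}-1|\le t/\alpha\le t$, and $(1-e^{-t})^{1/\alpha}\le t^{1/\alpha}$ (the map $u\mapsto u^{1/\alpha}$ being increasing). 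This yields $\E|\tilde X_t^x-x|\le t|x|+t^{1/\alpha}c_{\alpha,d}\le C_{\alpha,d}(1+|x|)(t+t^{1/\alpha})$, as claimed.

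The only slightly delicate points are the justification of the explicit solution formula and of the scaling identity for the stable stochastic integral, but both are standard for additive-noise linear L\'evy-driven SDEs. If one prefers to avoid them, an alternative is to apply It\^o's formula to the smoothed norm $g_\e(x)=\sqrt{\e+|x|^2}$: the dissipative drift then contributes a negative term comparable to $-\frac{1}{\alpha}\E|\tilde X_t|$ (up to an $O(\sqrt\e)$ correction), the nonlocal generator $\Delta^{\alpha/2}$ applied to $g_\e$ is bounded by a constant $C_{\alpha,d}$ (here $\alpha<2$ is used), and a Gr\"onwall argument closes the estimate, again yielding the uniform-in-time bound (\ref{SDEmoment1}); this route works but needs marginally more care with the regularization.
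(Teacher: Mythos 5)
Your proof is correct, but it reaches \eqref{SDEmoment1} by a genuinely different route from the paper. The paper does not use the explicit solution at this stage: it constructs a $\mathcal{C}^{2}$ Lyapunov function $V$ with $V(x)=|x|$ for $|x|>1$, verifies a drift condition $\mathcal{L}^{\alpha}V\leq -c_{2}V+c_{3}\mathbf{1}_{A}$ for the generator $\mathcal{L}^{\alpha}f(x)=-\frac{1}{\alpha}\langle x,\nabla f(x)\rangle+\Delta^{\alpha/2}f(x)$, and invokes the Meyn--Tweedie exponential ergodicity theorem to conclude $\mathbb{E}|\tilde{X}_{t}^{x}|\leq C_{\alpha,d}(1+|x|)$ uniformly in $t$; this is essentially the second, ``smoothed norm plus Gr\"onwall'' route you sketch at the end, packaged through an off-the-shelf stability theorem. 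Your primary argument instead reads the bound off the variation-of-constants formula $\tilde{X}_{t}^{x}=e^{-t/\alpha}x+\int_{0}^{t}e^{-(t-s)/\alpha}\,\dif Z_{s}$ together with the identity $\int_{0}^{t}e^{-(t-s)/\alpha}\,\dif Z_{s}\stackrel{\rm d}{=}(1-e^{-t})^{1/\alpha}Z_{1}$ and $\mathbb{E}|Z_{1}|<\infty$ for $\alpha>1$. This is more elementary and self-contained, and the explicit formula is unimpeachable here --- the paper itself records it as \eqref{solutionstable} in the proof of Lemma \ref{Qregular}. What the Lyapunov route buys is robustness: it requires no closed-form solution and would survive a nonlinear dissipative drift, whereas your computation is tied to the linear OU structure. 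For \eqref{SDEM2} the two arguments are close in spirit (the paper integrates the SDE in time and applies \eqref{SDEmoment1} plus $\mathbb{E}|Z_{t}|=t^{1/\alpha}\mathbb{E}|Z_{1}|$; you expand the explicit solution and use $1-e^{-u}\leq u$), and both give the stated rate.
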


Moreover, by It$\hat{o}$'s formula, we have the following lemma, which will be proved in Appendix \ref{stable application}.

\begin{lemma}\label{compare}
Let $(\tilde{X}_{t})_{t\geq0}$ and $(\tilde{Y}_{k})_{k\geq0}$ be defined by (\ref{SDE}) and (\ref{E-M}), respectively. Then, for any $x\in\mathbb{R}^{d}$, $\eta\in(0,1]$, $f:\mathbb{R}^{d}\rightarrow\mathbb{R}$ satisfying $\|\nabla f\|<\infty$ and $\|\nabla^{2}f\|_{{\rm HS}}<\infty$, we have
\begin{align*}
\big|\mathbb{E}[f(\tilde{X}_{\eta}^{x})-f(\tilde{Y}_{1})]\big|\leq C_{\alpha,d}(1+|x|)(\|\nabla f\|+\|\nabla^{2}f\|_{{\rm HS}})\eta^{\frac{2}{\alpha}}.
\end{align*}
\end{lemma}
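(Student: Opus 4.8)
Lemma \ref{compare} asks for a one-step comparison between the continuous-time $\alpha$-stable OU process run for time $\eta$ and one EM step, measured against test functions with bounded first and second derivatives, with an error of order $\eta^{2/\alpha}$. The plan is to write $\mathbb{E}[f(\tilde X^x_\eta) - f(\tilde Y_1)]$ as a sum of three pieces: the drift contribution, the jump/noise contribution, and a remainder coming from the fact that in the continuous process the drift and noise act simultaneously over $[0,\eta]$ whereas in the EM step they are frozen at the initial point $x$. Concretely, by Itô's formula for the SDE \eqref{SDE},
\[
\mathbb{E}[f(\tilde X^x_\eta)] - f(x) = \mathbb{E}\int_0^\eta \Big[ -\tfrac1\alpha \langle \nabla f(\tilde X^x_s), \tilde X^x_s\rangle + \Delta^{\alpha/2} f(\tilde X^x_s)\Big]\,\mathrm{d}s,
\]
while for the EM step, writing $\tilde Y_1 = x - \tfrac{\eta}{\alpha}x + \tfrac{\eta^{1/\alpha}}{\sigma}\tilde Z$, a first-order Taylor expansion in the drift direction plus an exact computation of the expectation over the Pareto jump gives an analogous expression. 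The key arithmetic fact to exploit is that the normalization $\sigma = (\alpha/(V(\mathbb S^{d-1}) d_\alpha))^{1/\alpha}$ is chosen precisely so that $\tfrac{\eta^{1/\alpha}}{\sigma}\tilde Z$ has, up to the truncation $|z|>1$, the same ``jump intensity'' as the increment $Z_\eta \overset{d}{=} \eta^{1/\alpha} Z_1$ of the driving stable process; so the leading jump terms should cancel and the leftover is governed by the mismatch between the truncated Pareto law and the genuine stable increment.

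Next I would estimate each leftover term. For the drift, the difference between $-\tfrac1\alpha\mathbb{E}\int_0^\eta \langle\nabla f(\tilde X^x_s),\tilde X^x_s\rangle\,\mathrm{d}s$ and $-\tfrac1\alpha\langle \nabla f(x),\eta x\rangle$ is controlled using $\|\nabla f\|<\infty$, $\|\nabla^2 f\|_{\mathrm{HS}}<\infty$, the moment bound $\mathbb{E}|\tilde X^x_t|\le C_{\alpha,d}(1+|x|)$ from Lemma \ref{SDEM}, and the displacement bound $\mathbb{E}|\tilde X^x_s - x|\le C_{\alpha,d}(1+|x|)(s + s^{1/\alpha})$; integrating $s^{1/\alpha}$ over $[0,\eta]$ yields a term of order $\eta^{1+1/\alpha} = \eta\cdot\eta^{1/\alpha}$, which is $\le \eta^{2/\alpha}$ since $\alpha<2$ forces $1 \ge 1/\alpha$... wait, $\alpha<2 \Rightarrow 1/\alpha > 1/2$, and $1 + 1/\alpha$ versus $2/\alpha$: $1+1/\alpha \ge 2/\alpha \iff 1\ge 1/\alpha \iff \alpha\ge1$, which holds. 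Good — so the drift remainder is $O(\eta^{1+1/\alpha})\le O(\eta^{2/\alpha})$. For the noise part, I would compare $\mathbb{E}[f(x) - f(x) \text{ plus stable increment}]$ with the frozen-base-point version: the difference between $\Delta^{\alpha/2}f$ evaluated along the trajectory versus at $x$ again costs $\mathbb{E}|\tilde X^x_s - x|$ times a second-derivative bound, and the remaining genuinely new estimate is the difference between the expectation of $f(x+y) - f(x) - \langle\nabla f(x), y\rangle$ against the (scaled, truncated Pareto) measure and against the full $\alpha$-stable Lévy measure $d_\alpha |y|^{-\alpha-d}\mathrm{d}y$ restricted appropriately — here the small-jump part $|y|\le \eta^{1/\alpha}$ contributes $\le \|\nabla^2 f\| \int_{|y|\le \eta^{1/\alpha}} |y|^2 d_\alpha|y|^{-\alpha-d}\mathrm{d}y \asymp \eta^{(2-\alpha)/\alpha}$, and this is exactly the claimed rate $\eta^{2/\alpha}$ divided by... no: $\eta^{(2-\alpha)/\alpha} = \eta^{2/\alpha}\eta^{-1} $ — so actually I should be more careful and phrase the noise comparison with a second-order Taylor remainder so that small jumps give $|y|^2$ and the integral $\int_{|y|\le 1}|y|^2 |y|^{-\alpha-d}\mathrm dy$ converges, producing after the $\eta^{1/\alpha}$-scaling a clean $\eta^{2/\alpha}$ factor; the large-jump parts ($|y|>\eta^{1/\alpha}$, and in particular $|y|>1$ where Pareto and stable Lévy densities literally agree) match exactly after the normalization, leaving only a boundary/truncation discrepancy on the annulus $\eta^{1/\alpha}<|y|\le 1$ that a first-order bound $\|\nabla f\||y|$ handles with the same $\eta^{2/\alpha}$ outcome.

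Assembling the pieces, every remainder is bounded by $C_{\alpha,d}(1+|x|)(\|\nabla f\| + \|\nabla^2 f\|_{\mathrm{HS}})\eta^{2/\alpha}$, which is the assertion. I would organize the writeup as: (i) Itô expansion of the continuous part; (ii) exact expansion of the one EM step, isolating the Pareto integral; (iii) matching the leading jump terms via the choice of $\sigma$ and $d_\alpha$; (iv) bounding the trajectory-vs-basepoint discrepancies via Lemmas \ref{SDEM} and \ref{EMmoment}; (v) bounding the small-jump Taylor remainder and the truncation annulus. The main obstacle I anticipate is step (iii)–(v): getting the bookkeeping of the Lévy-measure comparison right so that the normalization constant $\sigma$ cancels the correct part of the Pareto integral, and verifying that the residual small-jump integral genuinely scales like $\eta^{(2-\alpha)/\alpha}\cdot\eta^{?}$ — i.e., confirming the exponent arithmetic $\min\{1+\tfrac1\alpha,\ \tfrac2\alpha\} = \tfrac2\alpha$ for $\alpha\in(1,2)$ so that no term is worse than $\eta^{2/\alpha}$. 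A secondary technical point is justifying the use of Itô's formula for $f$ with only bounded first and second derivatives (not compactly supported) against the $\alpha$-stable generator, which requires checking the relevant integrability so that the compensated jump martingale has zero mean; this is routine given $\mathbb{E}|\tilde X_t|<\infty$ and $\|\nabla f\|,\|\nabla^2 f\|_{\mathrm{HS}}<\infty$, but should be stated.
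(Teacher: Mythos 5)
Your decomposition is essentially the paper's: the paper splits $\mathbb{E}[f(\tilde X^x_\eta)-f(\tilde Y_1)]$ into a drift discrepancy $\mathcal J_1$ (same noise $Z_\eta$ in both terms, bounded by $\|\nabla f\|\int_0^\eta\mathbb{E}|\tilde X^x_r-x|\,\dif r\lesssim\eta^{1+1/\alpha}\le\eta^{2/\alpha}$, exactly your step (iv)) and a noise discrepancy $\mathcal J_2$ with the drift frozen at the Euler point, which it handles by It\^o's formula for the stable increment versus a Taylor expansion of the Pareto jump; the normalization $\sigma$ makes the Pareto integral equal $\eta\Delta^{\alpha/2}f$ up to a small-jump remainder over $\{|z|<\eta^{1/\alpha}/\sigma\}$ of size $\eta\|\nabla^2 f\|_{\rm HS}\int_{|z|<\eta^{1/\alpha}/\sigma}|z|^{2-\alpha-d}\dif z\asymp\|\nabla^2 f\|_{\rm HS}\,\eta^{2/\alpha}$ — your steps (ii), (iii), (v), with the prefactor $\eta$ you momentarily lost but correctly recovered.

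One step of your sketch would fail as literally stated: you propose to control the difference between $\Delta^{\alpha/2}f$ evaluated along the perturbed point and at the base point by ``$\mathbb{E}|\tilde X^x_s-x|$ times a second-derivative bound,'' i.e.\ a Lipschitz bound on $\Delta^{\alpha/2}f$. Under the hypotheses $\|\nabla f\|,\|\nabla^2 f\|_{\rm HS}<\infty$ the function $\Delta^{\alpha/2}f$ is in general \emph{not} Lipschitz (that would require a third derivative to handle the small-jump part of the difference of generators); it is only $(2-\alpha)$-H\"older, with constant proportional to $\|\nabla^2 f\|_{\rm HS}$. This is exactly the estimate the paper imports from \cite[(2.23)]{CNXY19}, and it still closes the argument because the relevant displacement is the stable increment $Z_r$, so $\int_0^\eta\mathbb{E}|Z_r|^{2-\alpha}\dif r\asymp\eta^{1+(2-\alpha)/\alpha}=\eta^{2/\alpha}$. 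So the fix is to replace your Lipschitz step by this H\"older estimate; everything else in your plan, including the exponent bookkeeping $\min\{1+\tfrac1\alpha,\tfrac2\alpha\}=\tfrac2\alpha$ for $\alpha\in(1,2)$, matches the paper.
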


With the help of the heat kernel estimates of the $\alpha$-stable process, we can obtain the following estimates, which will be proved in Appendix \ref{stable application}.

\begin{lemma}\label{Qregular}
Let $\tilde{X}_{t}$ be the solution to the equation (\ref{SDE}), and denote $Q_{t}h(x)=\mathbb{E}[h(\tilde{X}_{t}^{x})]$ for $h\in{\rm Lip}(1)$. Then, for any $x\in\mathbb{R}^{d}$ and $t>0$, we have
\begin{align*}
|\nabla(Q_{t}h)(x)|\leq\|\nabla h\|e^{-\frac{t}{\alpha}}, \qquad \|\nabla^{2}(Q_{t}h)(x)\|_{{\rm HS}}\leq C_{\alpha,d}\|\nabla h\|t^{-\frac{1}{\alpha}}e^{-\frac{t}{\alpha}}.
\end{align*}
\end{lemma}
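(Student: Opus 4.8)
The statement concerns the regularizing effect of the semigroup $Q_t h(x) = \mathbb{E}[h(\tilde X_t^x)]$ associated with the Ornstein--Uhlenbeck $\alpha$-stable SDE \eqref{SDE}. The first bound (the gradient estimate) is the soft one: since \eqref{SDE} is linear, the flow map $x \mapsto \tilde X_t^x$ is explicit and affine in $x$. Writing the solution by variation of constants, $\tilde X_t^x = e^{-t/\alpha} x + \int_0^t e^{-(t-s)/\alpha}\, dZ_s$, one sees that $\nabla_v \tilde X_t^x = e^{-t/\alpha} v$ deterministically. Hence for $h \in {\rm Lip}(1)$ (more generally with $\|\nabla h\| < \infty$), $\nabla_v (Q_t h)(x) = \mathbb{E}[\nabla h(\tilde X_t^x) \cdot e^{-t/\alpha} v]$, and taking absolute values gives $|\nabla_v(Q_t h)(x)| \le \|\nabla h\|\, e^{-t/\alpha} |v|$, which is exactly the first claim. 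I would state this first and dispose of it quickly.

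For the Hessian bound, the plan is to exploit the fact that the law of $\tilde X_t^x$ has a smooth density and that the $x$-dependence enters only through the deterministic shift $e^{-t/\alpha} x$. Concretely, let $p_t(\cdot)$ denote the density of the stochastic convolution $\int_0^t e^{-(t-s)/\alpha} dZ_s$; then $Q_t h(x) = \int_{\mathbb{R}^d} h(e^{-t/\alpha} x + z)\, p_t(z)\, dz$. A change of variables moves the $x$-dependence onto $p_t$: $Q_t h(x) = \int h(y)\, p_t(y - e^{-t/\alpha} x)\, dy$. Differentiating twice in $x$ then transfers derivatives onto $p_t$; but to keep only \emph{one} derivative on $h$ (so that $\|\nabla h\|$ rather than $\|h\|$ appears, which is what the stated bound requires), I would instead differentiate once in $x$ to land on $\nabla h$, write $\nabla_{v_1}(Q_t h)(x) = e^{-t/\alpha}\int \nabla_{v_1} h(e^{-t/\alpha} x + z)\, p_t(z)\, dz$, then perform the change of variables and integrate the \emph{second} $x$-derivative by parts against $\nabla p_t$. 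This yields a representation of the form $\nabla_{v_2}\nabla_{v_1}(Q_t h)(x) = e^{-2t/\alpha}\int \nabla_{v_1} h(y)\, \langle \nabla p_t(y - e^{-t/\alpha}x), v_2\rangle\, dy$, so that $\|\nabla^2 (Q_t h)(x)\|_{\rm HS} \le e^{-2t/\alpha}\, \|\nabla h\| \int |\nabla p_t(z)|\, dz \cdot C_d$.

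The main obstacle is therefore the gradient heat-kernel estimate $\int_{\mathbb{R}^d} |\nabla p_t(z)|\, dz \le C_{\alpha,d}\, t^{-1/\alpha}$ for the density of the stochastic convolution. By the scaling $Z_t \stackrel{d}{=} t^{1/\alpha} Z_1$ and a substitution, $\int_0^t e^{-(t-s)/\alpha} dZ_s$ has a density comparable (up to the bounded factors $e^{-(t-s)/\alpha} \in [e^{-t/\alpha},1]$) to that of $c(t)^{1/\alpha} Z_1$ with $c(t) = \int_0^t e^{-\alpha'(t-s)}ds \asymp \min(t,1)$ for a suitable constant; since $\nabla p_1^{(\alpha\text{-stable})} \in L^1(\mathbb{R}^d)$ (standard for rotationally symmetric stable densities, see \cite{BG68,Sat99}), parabolic scaling gives the $t^{-1/\alpha}$ factor, and the exponential factor $e^{-2t/\alpha}$ absorbs into $e^{-t/\alpha}$ for $t \le 1$ and dominates for $t \ge 1$. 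I would quote the $L^1$-gradient bound for the stable kernel (and its scaling) as a known fact and assemble the pieces; the bookkeeping of the OU drift's effect on the convolution kernel -- showing it only contributes bounded multiplicative constants and the harmless discount $c(t) \asymp \min(t,1)$ -- is the only genuinely technical point. Combining, $\|\nabla^2 (Q_t h)(x)\|_{\rm HS} \le C_{\alpha,d}\|\nabla h\|\, t^{-1/\alpha} e^{-t/\alpha}$, as claimed.
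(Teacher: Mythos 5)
Your proposal is correct and follows essentially the same route as the paper: both write $Q_t h(x)=\int_{\mathbb{R}^d} h(y)\,p(1-e^{-t},y-e^{-t/\alpha}x)\,dy$ via the explicit solution of \eqref{SDE}, place one derivative on $h$ and one on the kernel, and invoke a gradient bound for the rotationally symmetric stable density together with its scaling in $t$ (the paper uses the pointwise Bogdan--Jakubowski estimate and integrates it; you quote the equivalent $L^1$-gradient bound directly). One small caution: for the kernel step you need the \emph{exact} identity in law $\int_0^t e^{-(t-s)/\alpha}\,dZ_s\stackrel{\rm d}{=}Z_{1-e^{-t}}$ (immediate from the characteristic function), not merely a density ``comparable up to bounded factors,'' since two-sided density comparability alone would not transfer a gradient estimate to the convolution kernel.
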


With the above results, we can give the proof of Theorem \ref{stablethm}.\\
{\bf {\it Proof of Theorem \ref{stablethm}.}}
In order to apply Theorem \ref{mainthm}, we need to identify the $X_{t}$ and $Y_{k}$. Let $X_{t}=\tilde{X}_{\eta t}$, $Y_{k}=\tilde{Y}_{k}$, $X_{0}=Y_{0}=x\in\mathbb{R}^{d}$ and $N\geq2$. Now we apply (\ref{orign}) with $u_{k}(x)=\mathbb{E}h(X_{k}^{x})=\mathbb{E}h(\tilde{X}_{\eta k}^{x})$ for $k\geq1$ to prove the theorem. Since $h\in{\rm Lip}(1)$, the (\ref{SDEmoment1}) and Lemma \ref{EMmoment} imply $\mathbb{E}|h(X_{t}^{x})|\leq |h(0)|+\mathbb{E}|\tilde{X}_{\eta t}|<\infty$ and $\mathbb{E}|h(Y_{k}^{x})|\leq |h(0)|+\mathbb{E}|\tilde{Y}_{k}|<\infty$, respectively, the condition (1) is proved. By Lemma \ref{Qregular}, we can further imply that the function $u_{k}$ has bounded 1st and 2nd derivatives. Hence, by the It$\hat{o}$'s formula for SDE, we can see that the conditions (2) and (3) in Theorem \ref{mainthm} is satisfied. When $1\leq j\leq N-1$, by Lemmas \ref{compare}, \ref{Qregular} and \ref{EMmoment}, we have
\begin{align*}
\left|\E \left[u_{N-j}(X^{Y_{j-1}}_{1})-u_{N-j}(Y_{1}^{Y_{j-1}})\right]\right|\leq& C_{\alpha,d}(1+\mathbb{E}|Y_{j-1}|)\big(1+[\eta(N-j)]^{-\frac{1}{\alpha}}\big)e^{-\frac{\eta(N-j)}{\alpha}}\eta^{\frac{2}{\alpha}}\\
\leq& C_{\alpha,d}(1+|x|)\big(1+[\eta(N-j)]^{-\frac{1}{\alpha}}\big)e^{-\frac{\eta(N-j)}{\alpha}}\eta^{\frac{2}{\alpha}},
\end{align*}
which implies
\begin{align*}
&\left|\sum_{j=1}^{N-1} \E \left[u_{N-j}(X^{Y_{j-1}}_{1})-u_{N-j}(Y_{1}^{Y_{j-1}})\right]\right|\\
\leq&C_{\alpha,d}(1+|x|)\sum_{j=1}^{N-1}\big(1+[\eta(N-j)]^{-\frac{1}{\alpha}}\big)e^{-\frac{\eta(N-j)}{\alpha}}\eta^{\frac{2}{\alpha}}\\
\leq&C_{\alpha,d}(1+|x|)\eta^{\frac{2}{\alpha}}\int_{0}^{N}\big(1+(\eta y)^{-\frac{1}{\alpha}}\big)e^{-\frac{\eta y}{\alpha}}\dif y\leq C_{\alpha,d}(1+|x|)\eta^{\frac{2-\alpha}{\alpha}}.
\end{align*}
When $j=N$, by (\ref{SDEM2}) and Lemma \ref{EMmoment}, we have
\begin{align*}
\big|\mathbb{E}\big[h\big(\tilde{X}_{\eta}^{\tilde{Y}_{N-1}}\big)-h(\tilde{Y}_{N-1})\big]\big|\leq\mathbb{E}\big|\tilde{X}_{\eta}^{\tilde{Y}_{N-1}}-\tilde{Y}_{N-1}|\big|
\leq&C_{\alpha,d}(1+\mathbb{E}|\tilde{Y}_{N-1}|)\eta^{\frac{1}{\alpha}}\\
\leq&C_{\alpha,d}(1+|x|)\eta^{\frac{1}{\alpha}},
\end{align*}
and recall (\ref{E-M}), Lemma \ref{EMmoment} implies
\begin{align*}
\big|\mathbb{E}[h(\tilde{Y}_{N})-h(\tilde{Y}_{N-1})]\big|\leq\mathbb{E}\big|\tilde{Y}_{N}-\tilde{Y}_{N-1}\big|
\leq&\frac{\eta}{\alpha}\mathbb{E}|\tilde{Y}_{N-1}|+\frac{\eta^{\frac{1}{\alpha}}}{\sigma}\mathbb{E}|\tilde{Z}_{N}|\leq C_{\alpha,d}(1+|x|)\eta^{\frac{1}{\alpha}}.
\end{align*}
These imply
\begin{align*}
\big|\mathbb{E}\big[h\big(\tilde{X}_{\eta}^{\tilde{Y}_{N-1}}\big)-h(\tilde{Y}_{1}^{\tilde{Y}_{N-1}})\big]\big|\leq C_{\alpha,d}(1+|x|)\eta^{\frac{1}{\alpha}}.
\end{align*}

Combining all of above, we have
\begin{align*}
 \big|\mathbb{E}h(\tilde{X}_{\eta N})-\mathbb{E}h(\tilde{Y}_{N})\big|\leq C_{\alpha,d}(1+|x|)\eta^{\frac{2-\alpha}{\alpha}}.
 \end{align*}

\qed

\subsection{Proof of theorem \ref{normalCLT}}\label{proofCLT}

In order to use Theorem \ref{mainthm}, we need the following properties for the semigroup of Brownian motion, which
will be proved in Appendix \ref{normal spplication}.

\begin{lemma}\label{normal}
	Let $h\in {\rm Lip(1)}$ and denote $P_{t}h(x)=\mathbb{E}h(B_{t}^{x}),$
	then for any $x,v,v_{1},v_{2}\in\mathbb{R}^{d}$ and $t>0$, we have
	\begin{align}\label{Bp3}
	\big|\langle\nabla^{2}(P_{t}h)(x+v)-\nabla^{2}(P_{t}h)(x),v_{1}v_{2}^{T}\rangle_{\rm{HS}}\big|\leq
\frac{2|v_{1}||v_{2}||v|}{t},
	\end{align}
	\begin{align}\label{Bp4}
	\big|\Delta(P_{t}h)(x+v)-\Delta(P_{t}h)(x)\big|\leq\frac{2d}{t}|v|.
	\end{align}
\end{lemma}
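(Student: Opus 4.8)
The plan is to run everything through the Gaussian representation $P_th(x)=\mathbb{E}\big[h(x+\sqrt t\,Z)\big]$ with $Z\sim N(0,I_d)$, to compute $\nabla^2(P_th)$ by differentiating the heat kernel rather than the merely Lipschitz function $h$, and then to use $h\in{\rm Lip}(1)$ only through the increment bound $|h(\cdot+v)-h(\cdot)|\le|v|$.

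The first step is to record the key identity. Writing $p_t(w)=(2\pi t)^{-d/2}\exp(-|w|^2/(2t))$ for the heat kernel, so that $P_th(x)=\int_{\mathbb{R}^d}h(y)p_t(x-y)\,\dif y$, and using $\nabla^2 p_t(w)=\big(ww^{\rm T}/t^2-I_d/t\big)p_t(w)$, the linear growth of $h$ against the super-exponential decay of $\nabla^2 p_t$ justifies differentiating under the integral sign; the substitution $y=x-\sqrt t\,z$ then gives, for all $v_1,v_2,x\in\mathbb{R}^d$,
\begin{align*}
\nabla_{v_2}\nabla_{v_1}P_th(x)=\frac1t\,\mathbb{E}\Big[h(x+\sqrt t\,Z)\big(\langle Z,v_1\rangle\langle Z,v_2\rangle-\langle v_1,v_2\rangle\big)\Big].
\end{align*}

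For \eqref{Bp3} I subtract this identity at $x+v$ and at $x$; since the Hermite-type weight $\langle Z,v_1\rangle\langle Z,v_2\rangle-\langle v_1,v_2\rangle$ does not depend on the base point, the difference equals $\frac1t\mathbb{E}\big[\big(h(x+v+\sqrt t\,Z)-h(x+\sqrt t\,Z)\big)\big(\langle Z,v_1\rangle\langle Z,v_2\rangle-\langle v_1,v_2\rangle\big)\big]$. Bounding the $h$-increment by $|v|$ via $h\in{\rm Lip}(1)$, and combining the Cauchy--Schwarz estimate $\mathbb{E}|\langle Z,v_1\rangle\langle Z,v_2\rangle|\le(\mathbb{E}\langle Z,v_1\rangle^2)^{1/2}(\mathbb{E}\langle Z,v_2\rangle^2)^{1/2}=|v_1||v_2|$ with $|\langle v_1,v_2\rangle|\le|v_1||v_2|$, one obtains $\mathbb{E}|\langle Z,v_1\rangle\langle Z,v_2\rangle-\langle v_1,v_2\rangle|\le 2|v_1||v_2|$, hence the bound $2|v_1||v_2||v|/t$. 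For \eqref{Bp4} I sum the identity over an orthonormal basis $e_1,\dots,e_d$ to get $\Delta P_th(x)=\frac1t\mathbb{E}\big[h(x+\sqrt t\,Z)(|Z|^2-d)\big]$, take the difference at $x+v$ and $x$ as above, bound the $h$-increment by $|v|$, and use $\mathbb{E}\big||Z|^2-d\big|\le\mathbb{E}|Z|^2+d=2d$.

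I do not expect a serious obstacle. The only genuine point of care is in \eqref{Bp3}: one must keep $h$ undifferentiated (it is not differentiable, so integrating by parts onto $h$ is illegitimate) and must not bound $|\langle Z,v_1\rangle\langle Z,v_2\rangle|$ by $|Z|^2|v_1||v_2|$, which would introduce a spurious factor of $d$; keeping the exact Hermite weight and applying Cauchy--Schwarz to the two one-dimensional projections is what makes the constant dimension-free. The interchange of differentiation and integration is routine given the growth/decay bookkeeping above.
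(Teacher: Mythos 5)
Your proof is correct, and it reaches the same second--derivative representation as the paper (differentiating the Gaussian kernel twice so that $h$ itself is never differentiated), but the final step is genuinely different. The paper differentiates a \emph{third} time, integrates by parts to transfer $\nabla_{v_3}$ onto $h$, bounds $\big|\nabla_{v_{3}}\nabla_{v_{2}}\nabla_{v_{1}}P_{t}h(x)\big|\leq \tfrac{2}{t}|v_1||v_2||v_3|$ using $\|\nabla h\|\le 1$, and then deduces \eqref{Bp3} by (implicitly) integrating this third--derivative bound along the segment from $x$ to $x+v$; it obtains \eqref{Bp4} from \eqref{Bp3} by writing $I_d=\mathbb{E}[WW^{\rm T}]$. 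You instead subtract the two Hessian representations directly, use only the zeroth--order increment bound $|h(\cdot+v)-h(\cdot)|\le |v|$, and control the Hermite weight by Cauchy--Schwarz on the one--dimensional projections (and $\mathbb{E}\big||Z|^2-d\big|\le 2d$ for the Laplacian). The constants agree. Your route buys two things: it sidesteps the question of whether $\nabla h$ exists (the paper's step is only legitimate a.e.\ by Rademacher, or after a mollification it does not carry out here), and it makes the passage from a pointwise derivative bound to the difference bound explicit rather than leaving it as an unstated mean--value step. The paper's route, in exchange, yields the third--derivative estimate itself, which is the form most directly analogous to the bounds used in its other applications. Your caution about not bounding $|\langle Z,v_1\rangle\langle Z,v_2\rangle|$ by $|Z|^2|v_1||v_2|$ is exactly right and is what keeps \eqref{Bp3} dimension--free.
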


With the above results, we can give the proof of Theorem \ref{normalCLT}.\\
{\bf {\it Proof of Theorem \ref{normalCLT}.}}
In order to apply Theorem \ref{mainthm}, we first need to identify the $X_{t}$ and $Y_{k}$ therein in our setting
and compute the corresponding $\mcl A^{X}$ and $\mcl A^{Y}$. Let $X_{t}=B_{\frac{t}{n}},$
$Y_{k}=S_{k}=\sum_{i=1}^{k}\frac{\xi_{i}}{\sqrt{n}},$ where $(\xi_{i})_{i\in\mathbb{Z}_{+}}$ is a sequence of
i.i.d. random vectors satisfying $\mathbb{E}\xi_{i}=\mathbf{0},$ $\mathbb{E}\xi_{i}\xi_{i}^{T}=I_{d}$ and
$\mathbb{E}|\xi_{i}|^{3}<\infty,$ $X_{0}=Y_{0}=0$ and $N=n.$ {Then, $u_k(x)=\E h({X}^x_{k})=\E h\big(B^x_{\frac{k}{n}}\big)$ for $k\geq1$, (\ref{condition1}) and (\ref{condition2}) below imply that $u_k$ has bounded first and second order derivatives, whereby it is easy to obtain that the conditions (2) and (3) in Theorem \ref{mainthm} is satisfied by It\^{o}'s formula of Brownian motion. In addition, the condition (1) can be derived easily from the fact $h\in{\rm Lip}(1)$, $\mathbb{E}|B_{t}|<\infty$ and $\mathbb{E}|\xi_{i}|<\infty$ for any $t\geq0$, $i=1,2,\cdots, n$. Then, it is straightforward to check that
\begin{align*}
\mathcal{A}^{X}u_{k}(x)=\lim_{t\rightarrow0}\frac{\mathbb{E}u_{k}(X_{t}^{x})-u_{k}(x)}{t}=\frac{1}{n}\lim_{t\rightarrow0}\frac{\mathbb{E}u_{k}\left(B_{\frac{t}{n}}^{x}\right)-u_{k}(x)}{\frac{t}{n}}
=\frac{1}{2n}\Delta
u_{k}(x)
\end{align*}
and
\begin{align*}
\mathcal{A}^{Y}u_{k}(x)=&\mathbb{E}[u_{k}(Y_{1}^{x})-u_{k}(x)]=\mathbb{E}\big[u_{k}(Y_{1}^{x})-u_{k}(x)-\langle\nabla
u_{k}(x),\frac{\xi_{1}}{\sqrt{n}}\rangle\big]\\
=&\frac{1}{n}\mathbb{E}\big[\int_{0}^{1}\int_{0}^{r}\langle\nabla^{2}u_{k}(x+s\frac{\xi_{1}}{\sqrt{n}}),\xi_{1}\xi_{1}^{T}\rangle_{{\rm
{HS}}}dsdr\big].
\end{align*}
Hence,
\begin{align}\label{difference2}
|\mathcal{A}^{X}u_{k}(x)-\mathcal{A}^{Y} u_{k}(x)|=&\big|\frac{1}{2n}\Delta
u_{k}(x)-\frac{1}{n}\mathbb{E}\big[\int_{0}^{1}\int_{0}^{r}\langle\nabla^{2}u_{k}(x+s\frac{\xi_{1}}{\sqrt{n}}),\xi_{1}\xi_{1}^{T}\rangle_{{\rm
{HS}}}dsdr\big]\big|.
\end{align}

Now we apply Theorem \ref{mainthm} to
prove the theorem,} it suffices to bound the two terms $\mcl I_{h}, \mcl{II}_{h}$ in \eqref{e:mainnew}. For the term $\mcl I_{h}$, we rewrite it as
\begin{align*}
\mcl I_{h}=&\sum_{j=1}^{N-1}
 \mathbb{E}\big[\mathcal{A}^{X}u_{N-j}(Y_{j-1})-\mathcal{A}^{Y}u_{N-j}(Y_{j-1})\big]\\
 &+\sum_{j=1}^{N-1}
 \mathbb{E}\int_{0}^{1}\big[\mathcal{A}^{X}u_{N-j}(X_{s}^{Y_{j-1}})-\mathcal{A}^{X}u_{N-j}(Y_{j-1})\big]\dif s:=\mcl I_{h,1}+\mcl I_{h,2}.
\end{align*}
For the first term, noticing that $\mathbb{E}\xi_{1}\xi_{1}^{T}=I_{d}$ and
$\langle\nabla^{2}f(x),I_{d}\rangle_{{\rm HS}}=\Delta f(x),$ by (\ref{difference2}), we have
\begin{align*}
|\mathcal{A}^{X}f(x)-\mathcal{A}^{Y} f(x)|=&\big|\frac{1}{2n}\Delta
f(x)-\frac{1}{n}\mathbb{E}\big[\int_{0}^{1}\int_{0}^{r}\langle\nabla^{2}f(x+s\frac{\xi_{1}}{\sqrt{n}}),\xi_{1}\xi_{1}^{T}\rangle_{{\rm
{HS}}}dsdr\big]\big|\\
\leq&\frac{1}{n}\int_{0}^{1}\int_{0}^{r}\mathbb{E}\big|\langle\nabla^{2}f(x+s\frac{\xi_{1}}{\sqrt{n}})-\nabla^{2}f(x),\xi_{1}\xi_{1}^{T}\rangle_{{\rm
{HS}}}\big|dsdr.
\end{align*}
Then, by (\ref{Bp3}), we have
\begin{align*}
|\mathcal{I}_{h,1}|\leq&\frac{1}{n}\sum_{j=1}^{n-1}\int_{0}^{1}\int_{0}^{r}\mathbb{E}\big|\langle\nabla^{2}u_{n-j}(x+s\frac{\xi_{1}}{\sqrt{n}})-\nabla^{2}u_{n-j}(x),\xi_{1}\xi_{1}^{T}\rangle_{{\rm
{HS}}}\big|dsdr\\
\leq&\frac{2}{\sqrt{n}}\mathbb{E}|\xi_{1}|^{3}\int_{0}^{1}\int_{0}^{r}sdsdr\sum_{j=1}^{n-1}\frac{1}{n-j}\\
=&\frac{1}{3\sqrt{n}}\mathbb{E}|\xi_{1}|^{3}\sum_{j=1}^{n-1}\frac{1}{j}\leq\frac{1}{3\sqrt{n}}\mathbb{E}|\xi_{1}|^{3}(1+\int_{1}^{n}\frac{1}{y}\dif
y)=\frac{1}{3\sqrt{n}}\mathbb{E}|\xi_{1}|^{3}(1+\ln n).
\end{align*}
For the second term, by (\ref{Bp4}) and the scaling property of $B_{t},$ i.e., $B_{t}\stackrel{\rm
d}{=}t^{\frac{1}{2}}B_{1},$ we have
\begin{align*}
|\mathcal{I}_{h,2}|\leq&\frac{1}{2n}\sum_{j=1}^{n-1}\int_{0}^{1}\mathbb{E}\big|\Delta u_{n-j}(X_{s}^{Y_{j-1}})-\Delta
u_{n-j}(Y_{j-1})\big|ds\\
\leq&d\int_{0}^{1}\mathbb{E}|B_{\frac{s}{n}}|ds\sum_{j=1}^{n-1}\frac{1}{n-j}=\frac{d}{\sqrt{n}}\mathbb{E}|B_{1}|\int_{0}^{1}s^{\frac{1}{2}}ds\sum_{j=1}^{n-1}\frac{1}{j}
\leq\frac{2d}{3\sqrt{n}}\mathbb{E}|B_{1}|(1+\ln n).
\end{align*}

It remains to estimate $\mathcal{II}_h$. By the scaling property of $B_{t},$ it is easily seen that
\begin{align*}
\mathbb{E}|h\big(X_{1}^{Y_{n-1}}\big)-h(Y_{n-1})|\leq&\mathbb{E}|X_{1}^{Y_{n-1}}-Y_{n-1}|=\mathbb{E}|B_{\frac{1}{n}}|=\frac{1}{\sqrt{n}}\mathbb{E}|B_{1}|,
\end{align*}
\begin{align*}
\mathbb{E}|h(Y_{n})-h(Y_{n-1})|\leq\mathbb{E}|Y_{n}-Y_{n-1}|=E\frac{|\xi_{n}|}{\sqrt{n}}.
\end{align*}
These imply
\begin{align*}
|\mathcal{II}_h|\leq(\mathbb{E}|B_{1}|+\mathbb{E}|\xi_{n}|)\frac{1}{\sqrt{n}}.
\end{align*}
Collecting the estimates of $\mathcal{I}_h$ and $\mathcal{II}_h$, which hold true for all $h\in {\rm {\rm
Lip(1)}},$ we immediately obtain
\begin{align*}
d_W(\mathcal{L}(B_{1}),\mathcal{L}(S_{n}))\leq\big[(\frac{2}{3}d+1)\mathbb{E}|B_{1}|+\frac{1}{3}E|\xi_{1}|^{3}+E|\xi_{1}|\big]n^{-\frac{1}{2}}(1+\ln
n).
\end{align*}
Then, the desired result follows from the fact that $B\stackrel{\rm d}{=}B_{1}.$
\qed

\begin{appendix}
\section{Proofs of Lemmas in {Subsections} \ref{SMESGD} and \ref{proofsgd}}\label{s:A1}

\subsection{Verifying assumptions for two examples.}

In this subsection, we verify {\bf Assumption A1} and {\bf Assumption A2} for Examples \ref{ex1} and \ref{ex2}.

\begin{lemma}
In Example \ref{ex1}, denote the smallest eigenvalue of the matrix $H$ by $\lambda_{\min}(H)$. Then, {\bf Assumption A1} and  {\bf Assumption A2} hold for
$\theta_{0}=\delta=\lambda_{\min}(H),$ $\kappa=\|H\|_{{\rm HS}}$ and $\theta_{1}=\theta_{2}=\theta_{3}=\theta_{4}=\theta_{5}=0$.
\end{lemma}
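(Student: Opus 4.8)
The plan is a direct verification built on the explicit formulas already recorded in Example \ref{ex1}, namely $\nabla\psi(x,\zeta)=H(x-\zeta)$, $\nabla P(x)=Hx$ and $\Sigma(x)=H^{2}$.

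First I would dispose of the conditions on $\nabla P$ in \textbf{Assumption A1}(i). Since $\nabla P(x)=Hx$ is linear in $x$, we have $\nabla_{v_{1}}\nabla P(x)=Hv_{1}$, while $\nabla_{v_{2}}\nabla_{v_{1}}\nabla P\equiv 0$ and $\nabla_{v_{3}}\nabla_{v_{2}}\nabla_{v_{1}}\nabla P\equiv 0$. Hence the second inequality in \eqref{diss} and \eqref{onthird} hold with $\theta_{1}=\theta_{2}=0$, and the first inequality in \eqref{diss} becomes $\langle v_{1},Hv_{1}\rangle\geq\lambda_{\min}(H)|v_{1}|^{2}$, which is exactly the Rayleigh-quotient characterization of the smallest eigenvalue of the symmetric matrix $H$; so $\theta_{0}=\lambda_{\min}(H)$ works, and it is strictly positive because $H$ is positive definite. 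Next I would treat $\Sigma(x)^{1/2}$: since $H$ is symmetric and positive definite, the unique symmetric positive-definite square root of $\Sigma(x)=H^{2}$ is $H$ itself, so $\Sigma(x)^{1/2}=H$ is constant in $x$, all of its directional derivatives vanish, and \eqref{expofir} together with the third-order bound hold with $\theta_{3}=\theta_{4}=\theta_{5}=0$. For the ellipticity in \textbf{Assumption A1}(ii), $\xi^{T}\Sigma(x)^{1/2}\xi=\xi^{T}H\xi\geq\lambda_{\min}(H)|\xi|^{2}$, so one may take $\delta=\lambda_{\min}(H)$.

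Finally, for \textbf{Assumption A2}, I would observe that $\nabla\psi(x,\zeta)-\nabla\psi(y,\zeta)=H(x-\zeta)-H(y-\zeta)=H(x-y)$ is deterministic (independent of $\zeta$), so $\mathbb{E}|\nabla\psi(x,\zeta)-\nabla\psi(y,\zeta)|^{4}=|H(x-y)|^{4}\leq\|H\|_{\rm op}^{4}|x-y|^{4}\leq\|H\|_{\rm HS}^{4}|x-y|^{4}$, using the norm comparison $\|H\|_{\rm op}\leq\|H\|_{\rm HS}$ from \eqref{op}; thus \eqref{stolin} holds with $\kappa=\|H\|_{\rm HS}$. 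There is no genuine obstacle here — the argument is a one-line computation in each item; the only point deserving a moment's care is the identification $\Sigma(x)^{1/2}=H$, which relies on positive definiteness of $H$, the same hypothesis that makes $\theta_{0}$ and $\delta$ strictly positive.
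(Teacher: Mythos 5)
Your verification is correct and follows essentially the same route as the paper's proof: direct computation from the explicit formulas $\nabla P(x)=Hx$, $\Sigma(x)^{1/2}=H$, $\nabla\psi(x,\zeta)=H(x-\zeta)$, with the Rayleigh-quotient bound giving $\theta_{0}=\delta=\lambda_{\min}(H)$ and the norm comparison $\|H\|_{\rm op}\leq\|H\|_{\rm HS}$ giving $\kappa$. Your added remarks (uniqueness of the symmetric positive-definite square root, and that $H(x-y)$ is deterministic) only make explicit what the paper leaves implicit.
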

\begin{proof}
Recall
\begin{align}\label{exab1}
\nabla P(x)=Hx, \qquad \Sigma(x)^{\frac{1}{2}}=H, \qquad \nabla\psi(x,\zeta)=H(x-\zeta).
\end{align}
Then, for any $v,x,y\in\mathbb{R}^{d}$ and non-zero vector $\xi\in\mathbb{R}^{d}$, it is easy to see that
\begin{align*}
\langle v,\nabla_{v}\nabla P(x)\rangle=\langle v,Hv\rangle\geq\lambda_{\min}(H)|v|^{2},
\end{align*}
\begin{align*}
\mathbb{E}\left|H(x-\zeta)-H(y-\zeta)\right|^{4}\leq\|H\|_{{\rm HS}}^{4}|x-y|^{4}
\end{align*}
and
\begin{align*}
\xi^{T}H\xi\geq\lambda_{\min}(H)|\xi|^{2}.
\end{align*}
Moreover, other results are clearly available from (\ref{exab1}).
\end{proof}

\begin{lemma}
In Example \ref{ex2}, denote the smallest eigenvalue of the matrix $H$ by $\lambda_{\min}(H)$. Then, {\bf Assumption A1} and  {\bf Assumption A2} hold for
$\theta_{0}=\lambda_{\min}(H)+\gamma,$ $\kappa^{4}=27(\|H\|_{{\rm HS}}^{4}+3d^{6}+\gamma^{4}),$ $\delta=\gamma$, $\theta_{1}=\theta_{2}=0$, $\theta_{3}=\sqrt{d}\|Q\|_{{\rm HS}}^{3}$, $\theta_{4}=\sqrt{d}\|Q\|_{{\rm HS}}^{4}(1+\gamma^{-1}+\gamma^{-3})$ and $\theta_{5}=3\sqrt{d}\|Q\|_{{\rm HS}}^{5}(2+\gamma^{-3}+\gamma^{-5})$.
\end{lemma}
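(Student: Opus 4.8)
The plan is to check the eight constants one at a time, feeding into the defining identities of Example~\ref{ex2}: $\nabla P(x)=(H+\gamma I)x$, $\nabla\psi(x,\zeta)-\nabla\psi(y,\zeta)=\bigl(H+Q\,{\rm diag}(\alpha)Q^{T}+\gamma I\bigr)(x-y)$, and, since $Q$ is orthogonal,
\begin{align*}
\Sigma(x)^{1/2}=Q\,{\rm diag}\bigl(g((Qx)_{1}),\dots,g((Qx)_{d})\bigr)Q^{T},\qquad g(s):=\sqrt{s^{2}+\gamma^{2}}.
\end{align*}
The constants $\theta_{0},\theta_{1},\theta_{2},\delta$ and $\kappa$ are quickly dealt with. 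The dissipativity bound holds with $\theta_{0}=\lambda_{\min}(H)+\gamma$ because $\nabla_{v_{1}}\nabla P(x)=(H+\gamma I)v_{1}$ and $H+\gamma I\succeq(\lambda_{\min}(H)+\gamma)I$; since $\nabla P$ is affine, $\theta_{1}=\theta_{2}=0$. The ellipticity constant is $\delta=\gamma$ because $g(s)\geq\gamma$ for all $s$, so ${\rm diag}(g((Qx)_{i}))\succeq\gamma I$ and hence $\Sigma(x)^{1/2}\succeq\gamma I$. For $\kappa$ I would bound $|\nabla\psi(x,\zeta)-\nabla\psi(y,\zeta)|\leq(\|H\|_{\rm HS}+|\alpha|+\gamma)|x-y|$, using $\|Q\,{\rm diag}(\alpha)Q^{T}\|_{\rm op}=\max_{i}|\alpha_{i}|\leq|\alpha|$, then take fourth powers and apply $(a+b+c)^{4}\leq27(a^{4}+b^{4}+c^{4})$ together with $\mathbb{E}|\alpha|^{4}=d^{2}+2d\leq3d^{6}$, which gives exactly $\kappa^{4}=27(\|H\|_{\rm HS}^{4}+3d^{6}+\gamma^{4})$.

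The substantive part is the chain of Hilbert--Schmidt bounds on the directional derivatives of $\Sigma(x)^{1/2}$, and the point is that the \emph{fixed} orthogonal conjugation reduces this to a scalar computation rather than perturbation theory of matrix square roots. Because the arguments $(Qx)_{i}$ are linear in $x$, differentiating entrywise gives
\begin{align*}
\nabla_{v_{1}}\Sigma(x)^{1/2}&=Q\,{\rm diag}\bigl(g'((Qx)_{i})\,(Qv_{1})_{i}\bigr)Q^{T},\\
\nabla_{v_{2}}\nabla_{v_{1}}\Sigma(x)^{1/2}&=Q\,{\rm diag}\bigl(g''((Qx)_{i})\,(Qv_{1})_{i}(Qv_{2})_{i}\bigr)Q^{T},\\
\nabla_{v_{3}}\nabla_{v_{2}}\nabla_{v_{1}}\Sigma(x)^{1/2}&=Q\,{\rm diag}\bigl(g'''((Qx)_{i})\,(Qv_{1})_{i}(Qv_{2})_{i}(Qv_{3})_{i}\bigr)Q^{T}.
\end{align*}
I would then combine three elementary facts: the submultiplicative estimates $\|QAQ^{T}\|_{\rm HS}\leq\|Q\|_{\rm HS}^{2}\|A\|_{\rm HS}$ and $\|{\rm diag}(a_{i})\|_{\rm HS}\leq\sqrt{d}\,\max_{i}|a_{i}|$; the bound $|(Qv)_{i}|\leq|Qv|\leq\|Q\|_{\rm HS}|v|$; and the uniform scalar bounds $|g'|\leq1$, $|g''(s)|=\gamma^{2}(s^{2}+\gamma^{2})^{-3/2}\leq\gamma^{-1}$, $|g'''(s)|=3\gamma^{2}|s|(s^{2}+\gamma^{2})^{-5/2}\leq3\gamma^{-2}$. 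Counting powers of $\|Q\|_{\rm HS}$ ($\|Q\|_{\rm HS}^{2}$ from the conjugation plus one per factor $(Qv_{k})_{i}$) yields $\|\nabla_{v_{1}}\Sigma^{1/2}\|_{\rm HS}\leq\sqrt{d}\,\|Q\|_{\rm HS}^{3}|v_{1}|$, $\|\nabla_{v_{2}}\nabla_{v_{1}}\Sigma^{1/2}\|_{\rm HS}\leq\sqrt{d}\,\|Q\|_{\rm HS}^{4}\gamma^{-1}|v_{1}||v_{2}|$ and $\|\nabla_{v_{3}}\nabla_{v_{2}}\nabla_{v_{1}}\Sigma^{1/2}\|_{\rm HS}\leq3\sqrt{d}\,\|Q\|_{\rm HS}^{5}\gamma^{-2}|v_{1}||v_{2}||v_{3}|$, each dominated by the stated $\theta_{3},\theta_{4},\theta_{5}$ (the extra factors $1+\gamma^{-1}+\gamma^{-3}$ and $2+\gamma^{-3}+\gamma^{-5}$ being harmless slack; exploiting orthogonality fully would even remove most of the $\|Q\|_{\rm HS}$ powers).

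The main obstacle I anticipate is not a single inequality but getting the derivative formulas in the second display cleanly justified: one must verify that differentiation commutes with the conjugation by the constant $Q$ and that $g$ applied to the diagonal entries produces no cross terms. This is exactly the feature for which Example~\ref{ex2} was designed---the covariance $\Sigma(x)$ is simultaneously diagonalizable by a \emph{fixed} matrix, so $\Sigma(x)^{1/2}$ is explicit. For a generic $\Sigma(x)$ one would instead have to use the integral representation $\nabla_{v}\Sigma^{1/2}=\int_{0}^{\infty}e^{-s\Sigma^{1/2}}(\nabla_{v}\Sigma)\,e^{-s\Sigma^{1/2}}\dif s$, whose Hilbert--Schmidt bound $\tfrac{1}{2\gamma}\|\nabla_{v}\Sigma\|_{\rm HS}$ grows with $|x|$ and would fail to be uniform; the algebraic structure of the example is what rescues uniformity. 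Once the two displays are in place, the remainder is bookkeeping.
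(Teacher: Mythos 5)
Your proposal is correct and follows essentially the same route as the paper: both rest on the fact that $\Sigma(x)^{1/2}=Q\,{\rm diag}(\sqrt{(Qx)_i^2+\gamma^2})\,Q^{T}$ is explicitly and simultaneously diagonalized by the fixed orthogonal $Q$, so the directional derivatives reduce to derivatives of the scalar $g(s)=\sqrt{s^2+\gamma^2}$ on the diagonal (the paper packages the first-order case as $\nabla_{v_1}\Sigma^{1/2}=\tfrac12(\Sigma^{1/2})^{-1}\nabla_{v_1}\Sigma$, which is the same computation since everything commutes). Your explicit bounds $\sqrt{d}\|Q\|_{\rm HS}^{3}$, $\sqrt{d}\|Q\|_{\rm HS}^{4}\gamma^{-1}$ and $3\sqrt{d}\|Q\|_{\rm HS}^{5}\gamma^{-2}$ are indeed dominated by the stated $\theta_3,\theta_4,\theta_5$, and the remaining constants $\theta_0,\theta_1,\theta_2,\delta,\kappa$ are verified exactly as in the paper.
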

\begin{proof}
Recall
\begin{align}\label{last2}
\nabla\psi(x,\zeta)=Q[D+{\rm diag}(\alpha)](Q^{T}x)+\gamma(x-\beta), \qquad \nabla P(x)=Hx+\gamma x,
\end{align}
\begin{align*}
\Sigma(x)^{\frac{1}{2}}=Q\left[{\rm diag}(Qx)^{2}+\gamma^{2}I_{d}\right]^{\frac{1}{2}}Q^{T}.
\end{align*}
Then, for any $v,x,y\in\mathbb{R}^{d}$ and non-zero vector $\xi\in\mathbb{R}^{d}$, it is easy to see that
\begin{align*}
\langle v,\nabla_{v}\nabla P(x)\rangle=\langle v,(H+\gamma I_{d})v\rangle\geq(\lambda_{\min}(H)+\gamma)|v|^{2},
\end{align*}
\begin{align*}
&\mathbb{E}\left|Q[D+{\rm diag}(\alpha)](Q^{T}x)+\gamma(x-\beta)-Q[D+{\rm diag}(\alpha)](Q^{T}y)-\gamma(y-\beta)\right|^{4}\\
\leq&27(\|H\|_{{\rm HS}}^{4}+\|Q\|_{{\rm HS}}^{8}\mathbb{E}|\alpha|^{4}+\gamma^{4})|x-y|^{4}\leq27(\|H\|_{{\rm HS}}^{4}+3d^{6}+\gamma^{4})|x-y|^{4},
\end{align*}
\begin{align*}
\xi^{T}\Sigma(x)^{\frac{1}{2}}\xi\geq\gamma|\xi|^{2}.
\end{align*}
Moreover, by (\ref{last2}), it is easily seen that $\theta_{1}=\theta_{2}=0$. For any $v_{1},v_{2},v_{3},x\in\mathbb{R}^{d}$, notice that $\Sigma(x)^{\frac{1}{2}}$ is a diagonal matrix, by the chain rule and product rule, it is straightforward to calculate
\begin{align*}
\|\nabla_{v_{1}}\Sigma(x)^{\frac{1}{2}}\|_{{\rm HS}}=\frac{1}{2}\left\|\left(\Sigma(x)^{\frac{1}{2}}\right)^{-1}\nabla_{v_{1}}\Sigma(x)\right\|_{{\rm HS}}\leq\sqrt{d}\|Q\|_{{\rm HS}}^{3}|v_{1}|,
\end{align*}
\begin{align*}
\|\nabla_{v_{2}}\nabla_{v_{1}}\Sigma(x)^{\frac{1}{2}}\|_{{\rm HS}}\leq\sqrt{d}\|Q\|_{{\rm HS}}^{4}(1+\gamma^{-1}+\gamma^{-3})|v_{1}||v_{2}|,
\end{align*}
\begin{align*}
\|\nabla_{v_{3}}\nabla_{v_{2}}\nabla_{v_{1}}\Sigma(x)^{\frac{1}{2}}\|_{{\rm HS}}\leq3\sqrt{d}\|Q\|_{{\rm HS}}^{5}(2+\gamma^{-3}+\gamma^{-5})|v_{1}||v_{2}||v_{3}|.
\end{align*}
\end{proof}

\subsection{Proof of Lemma \ref{second moment}}
Recall (\ref{SGD}), it is easily seen that
\begin{align*}
\mathbb{E}|w_{k}|^{4}
=&\mathbb{E}|w_{k-1}|^{4}-4\eta\mathbb{E}\big[|w_{k-1}|^{2}\langle\nabla\psi(w_{k-1},\zeta_{k}),w_{k-1}\rangle\big]+2\eta^{2}\mathbb{E}\big[|w_{k-1}|^{2}|\nabla\psi(w_{k-1},\zeta_{k})|^{2}\big]\\
&-4\eta^{3}\mathbb{E}\big[|\nabla\psi(w_{k-1},\zeta_{k})|^{2}\langle\nabla\psi(w_{k-1},\zeta_{k})
,w_{k-1}\rangle\big]+4\eta^{2}\mathbb{E}\big[\langle\nabla\psi(w_{k-1},\zeta_{k}),w_{k-1}\rangle^{2}\big]\\
&+\eta^{4}\mathbb{E}|\nabla\psi(w_{k-1},\zeta_{k})|^{4}.
\end{align*}
Since $\zeta_{k}$ is independent of $w_{k-1}$ for any $k\geq1$, \eqref{integration} yields
\begin{align*}
&\mathbb{E}\big[|w_{k-1}|^{2}\langle\nabla\psi(w_{k-1},\zeta_{k}),w_{k-1}\rangle\big]
=\E\left[\E\left[|w_{k-1}|^{2}\langle\nabla(w_{k-1},\zeta_{k}),w_{k-1}\rangle|w_{k-1}\right]\right] \\
=& \E \big[|w_{k-1}|^{2}\Ll\nabla P(w_{k-1})-\nabla P(0),w_{k-1}\Rr \big]+ \E \big[|w_{k-1}|^{2}\Ll\nabla P(0),w_{k-1}\Rr \big]\\
\ge& \theta_{0}\mathbb{E}|w_{k-1}|^{4}+ \E \big[|w_{k-1}|^{2}\Ll\nabla P(0),w_{k-1}\Rr \big],
\end{align*}
which implies
\begin{align*}
-4\eta\mathbb{E}\big[|w_{k-1}|^{2}\langle w_{k-1},\nabla\psi(w_{k-1},\zeta_{k})\rangle\big]
\leq&-4\theta_{0}\eta\mathbb{E}|w_{k-1}|^{4}+4\eta\E \big[|w_{k-1}|^{3}|\nabla P(0)|\big]\\
\leq&-3\theta_{0}\eta\mathbb{E}|w_{k-1}|^{4}+\frac{27|\nabla P(0)|^{4}}{\theta_{0}^{3}}\eta,
\end{align*}
where the last inequality comes from Young's inequality. In addition, by the Cauchy-Schwarz inequality and (\ref{stofo}), we have
\begin{align*}
&\E \big[|w_{k-1}|^{2}|\nabla\psi(w_{k-1},\zeta_{k})|^{2}\big]=\E \big[|w_{k-1}|^{2}\E[|\nabla(w_{k-1},\zeta_{k})|^{2}|w_{k-1}]\big]\\
\leq&\mathbb{E}[2\kappa^{2}|w_{k-1}|^{4}+2\ell^{2}_{0}|w_{k-1}|^{2}\big]\leq2(\kappa^{2}+\ell^{2}_{0})\mathbb{E}|w_{k-1}|^{4}+2\ell^{2}_{0},
\end{align*}
\begin{align*}
\mathbb{E}\big[|\nabla\psi(w_{k-1},\zeta_{k})|^{2}\langle\nabla\psi(w_{k-1},\zeta_{k}),w_{k-1}\rangle\big]
\leq&\mathbb{E}\big[|\nabla\psi(w_{k-1},\zeta_{k})|^{3}|w_{k-1}|\big]\\
\leq&4(\kappa^{3}+\ell^{3}_{0})(\mathbb{E}|w_{k-1}|^{4}+1),
\end{align*}
\begin{align*}
\mathbb{E}\big[\langle\nabla\psi(w_{k-1},\zeta_{k}),w_{k-1}\rangle^{2}\big]\leq2(\kappa^{2}+\ell^{2}_{0})\mathbb{E}|w_{k-1}|^{4}+2\ell^{2}_{0},
\end{align*}
\begin{align*}
\E |\nabla\psi(w_{k-1},\zeta_{k})|^{4}=\E\left[\E[|\nabla\psi(w_{k-1},\zeta_{k})|^{4}|w_{k-1}]\right]\leq 8\kappa^{4}\mathbb{E}|w_{k-1}|^{4}+8 \ell^{4}_{0}.
\end{align*}
These imply
\begin{align*}
\mathbb{E}|w_{k}|^{4}\leq&\left[1-3\theta_{0}\eta+12(\kappa^{2}+\ell^{2}_{0})\eta^{2}+16(\kappa^{3}+\ell^{3}_{0})\eta^{3}+8\kappa^{4}\eta^{4}\right]
\mathbb{E}|w_{k-1}|^{4}\\
&+\frac{27|\nabla P(0)|^{4}}{\theta_{0}^{3}}\eta+12\ell^{2}_{0}\eta^{2}+16(\kappa^{3}+\ell^{3}_{0})\eta^{3}+8 \ell^{4}_{0}\eta^{4}\\
\leq&(1-\theta_{0}\eta)\mathbb{E}|w_{k-1}|^{4}+C_{\theta,\kappa,\ell_{0}^{4}}\eta,
\end{align*}
where the second inequality is by the fact $\eta\leq\min\{1,\frac{\theta_{0}}{2(10+7\kappa^{4}+7\ell^{4}_{0})}\}$. Therefore,
\begin{align*}
\mathbb{E}|w_{k}|^{4}
\le \big[1-\theta_{0}\eta\big]^{k}|w_{0}|^{4}+C_{\theta,\kappa,\ell_{0}^{4}}\eta\sum_{j=0}^{k-1}(1-\theta_{0}\eta)^{j}\leq|w_{0}|^{4}+C_{\theta,\kappa,\ell_{0}^{4}}.
\end{align*}
\qed

\subsection{Proof of Lemma \ref{fourth}}
Recall (\ref{SME}), by It\^{o}'s formula, (\ref{stodiffu}), (\ref{integration}) and the Young inequality, we have
\begin{align*}
\frac{d}{ds}\E |\hat X^{x}_{s}|^{2}=&2\E\left[\Ll \hat X_{s}^{x}, -\nabla P(\hat X_{s}^{x})\Rr\right]+\eta\mathbb{E}\left[\|\Sigma(\hat X_{s}^{x})^{\frac{1}{2}}\|^{2}_{\rm HS}\right]\\
\leq&-2\E\Ll \hat X_{s}^{x},\nabla P(\hat X_{s}^{x})-\nabla P(0)\Rr+2\E \left[|\hat X_{s}^{x}||\nabla P(0)|\right]+2\eta\mathbb{E}\left[\kappa^{2}|\hat X_{s}^{x}|^{2}+\ell^2_{0}\right]\\
\leq&-\frac{3}{2}\theta_{0}\mathbb{E}|\hat X_{s}^{x}|^{2}+\frac{2|\nabla P(0)|^{2}}{\theta_{0}}+2\kappa^{2}\eta\mathbb{E}\left[|\hat X_{s}^{x}|^{2}\right]+2\eta\ell^2_{0}\\
\leq&-\theta_{0}\mathbb{E}|\hat X_{s}^{x}|^{2}+\frac{2|\nabla P(0)|^{2}}{\theta_{0}}+2\ell^2_{0},
\end{align*}
where the last inequality is by the fact $\eta\leq\min\{1,\frac{\theta_{0}}{4\kappa^{2}}\}$. This inequality, together with $\hat X^{x}_{0}=x,$ implies
\begin{align*}
\E |\hat X^{x}_{t}|^{2}\leq& e^{-\theta_{0}t}|x|^{2}+\left(\frac{2|\nabla P(0)|^{2}}{\theta_{0}}+2\ell^2_{0}\right)\int_{0}^{t}e^{-\theta_{0}(t-s)}ds\leq|x|^{2}+\frac{2|\nabla P(0)|^{2}+2\theta_{0}\ell_{0}^{2}}{\theta_{0}^{2}},
\end{align*}
\eqref{moment} is proved.

By the Cauchy-Schwarz inequality, the It$\hat{o}$ isometry, (\ref{shiftlin}) and (\ref{stodiffu}), it is easy to verify
\begin{align*}
\mathbb{E}|\hat X_{t}^{x}-x|^{2}\leq&2\mathbb{E}\left|\int_{0}^{t}-\nabla
P(\hat{X}_{r}^x)dr\right|^{2}+2\mathbb{E}\left|\int_{0}^{t}\left(\eta\Sigma(\hat{X}_{r})\right)^{\frac{1}{2}}dB_{r}\right|^{2}\nonumber\\
\leq&2t\int_{0}^{t}\mathbb{E}|\nabla P(\hat{X}_{r}^{x})|^{2}dr+2\eta\int_{0}^{t}\mathbb{E}\left\|\Sigma(\hat{X}_{r})^{\frac{1}{2}}\right\|^{2}_{{\rm {HS}}}dr\\
\leq&C_{\kappa,\ell_{0}^{2}}(t+\eta)\int_{0}^{t}\left(\mathbb{E}|\hat{X}_{r}^{x}|^{2}+1\right)dr,
\end{align*}
which, together with \eqref{moment}, implies (\ref{moment-1}).
\qed

\subsection{Proof of Lemma \ref{SGDtaylor}}
Recall SDE (\ref{SME}), for any $f\in\mathcal{C}_{b}^{2}(\mathbb{R}^{d}),$ we have
\begin{align*}
\mathcal{A}^{X}f(x)=\lim_{t\rightarrow0}\frac{\mathbb{E}f(X_{t}^{x})-f(x)}{t}=-\eta \langle\nabla f(x),\nabla P(x)\rangle+\frac{1}{2}\eta^{2}\langle\nabla^{2}f(x),\Sigma(x)\rangle_{{\rm HS}},
\end{align*}
Then, for any $u_k(x)=\E h(X^x_{\eta k})$ with $k\geq1,$ we have
\begin{align*}
&\mathbb{E}\int_{0}^{1}\mathcal{A}^{X}u_{k}(X_{s}^{x})ds\\
=&-\eta\mathbb{E}\int_{0}^{1}\langle\nabla u_{k}(\hat{X}_{\eta s}^{x}),\nabla P(\hat{X}_{\eta s}^{x})\rangle\dif s+\frac{1}{2}\eta^{2}\mathbb{E}\int_{0}^{1}\langle\nabla^{2}u_{k}(\hat{X}_{\eta s}^{x}),\Sigma(\hat{X}_{\eta s}^{x})\rangle_{{\rm HS}} \dif s\\
=&-\mathbb{E}\int_{0}^{\eta}\langle\nabla u_{k}(\hat{X}_{s}^{x}),\nabla P(\hat{X}_{s}^{x})\rangle\dif s+\frac{1}{2}\eta\mathbb{E}\int_{0}^{\eta}\langle\nabla^{2}u_{k}(\hat{X}_{s}^{x}),\Sigma(\hat{X}_{s}^{x})\rangle_{{\rm HS}} \dif s.
\end{align*}
Recall (\ref{SGD}), by Taylor expansion, we have
\begin{align*}
&\mathcal{A}^{Y}u_{k}(x)=\mathbb{E}[u_{k}(Y_{1}^{x})-u_{k}(x)]=\mathbb{E}[u_{k}(w_{1}^{x})-u_{k}(x)]\\
=&\mathbb{E}[\langle\nabla u_{k}(x),-\eta\nabla\psi(x,\zeta)\rangle]+\frac{1}{2}\eta^{2}\mathbb{E}\langle\nabla^{2}u_{k}(x),\nabla\psi(x,\zeta)(\nabla\psi(x,\zeta))^{T}\rangle_{{\rm HS}}+E[\mathcal{R}^{u_{k}}(x)]\\
=&\langle\nabla u_{k}(x),-\eta\nabla P(x)\rangle+\frac{1}{2}\eta^{2}\langle\nabla^{2}u_{k}(x),\Sigma(x)+\nabla P(x)\big(\nabla P(x)\big)^{T}\rangle_{{\rm HS}}+E[\mathcal{R}^{u_{k}}(x)],
\end{align*}
where
\begin{align*}
\mathcal{R}^{u_{k}}(x)=\eta^{2}\int_{0}^{1}\int_{0}^{r}\langle\nabla^{2}u_{k}(x-s\eta\nabla\psi(x,\zeta))-\nabla^{2}u_{k}(x),(\nabla\psi(x,\zeta))(\nabla\psi(x,\zeta))^{T}\rangle dsdr.
\end{align*}
Therefore, we have
\begin{align*}
\big|\mathbb{E}\int_{0}^{1}\big[\mathcal{A}^{Y}u_{k}(X_{s}^{x})-\mathcal{A}^{Y} u_{k}(x)\big]\dif s\big|\leq\mathcal{J}_{1}+\mathcal{J}_{2}+\E|\mathcal{R}^{u_{k}}(x)|,
\end{align*}
where
\begin{align*}
\mathcal{J}_{1}:=\Big|\mathbb{E}\int_{0}^{\eta}\langle\nabla u_{k}(\hat{X}_{s}^{x}),\nabla P(\hat{X}_{s}^{x})\rangle\dif s&-\eta\langle\nabla u_{k}(x),\nabla P(x)\rangle\\
&+\frac{1}{2}\eta^{2}\langle\nabla^{2} u_{k}(x),\nabla P(x)\big(\nabla P(x)\big)^{T}\rangle_{{\rm HS}}\Big|,
\end{align*}
\begin{align*}
\mathcal{J}_{2}:=\frac{\eta}{2}\Big|\mathbb{E}\int_{0}^{\eta}\langle\nabla^{2}u_{k}(\hat{X}_{s}^{x}),\Sigma(\hat{X}_{s}^{x})\rangle_{{\rm HS}} \dif s-\eta\langle\nabla^{2}u_{k}(x),\Sigma(x)\rangle_{{\rm HS}}\Big|.
\end{align*}

For $\mathcal{J}_{1},$ we have
\begin{align*}
\mathcal{J}_{1}\leq&\big|\mathbb{E}\int_{0}^{\eta}\langle\nabla u_{k}(\hat{X}_{s}^{x}),\nabla P(\hat{X}_{s}^{x})-\nabla P(x)\rangle\dif s\big|\\
&+\big|\mathbb{E}\int_{0}^{\eta}\langle\nabla u_{k}(\hat{X}_{s}^{x})-\nabla u_{k}(x),\nabla P(x)\rangle\dif s+\frac{1}{2}\eta^{2}\langle\nabla^{2} u_{k}(x),\nabla P(x)\big(\nabla P(x)\big)^{T}\rangle_{{\rm HS}}\big|\\
:=&\mathcal{J}_{11}+\mathcal{J}_{12}.
\end{align*}
By (\ref{gradient2}), (\ref{Lip}), the Cauchy-Schwarz inequality and (\ref{moment-1}), one has
\begin{align*}
\mathcal{J}_{11}\leq&\kappa e^{-\frac{\theta_{0}}{8}\eta k}\int_{0}^{\eta}\mathbb{E}\big|\hat{X}_{s}^{x}-x\big|\dif s\\
\leq& C_{\theta,\kappa,\ell^2_{0}}(1+|x|^{2})e^{-\frac{\theta_{0}}{8}\eta k}\int_{0}^{\eta}(s^{2}+\eta s)^{\frac{1}{2}}\dif s\leq C_{\theta,\kappa,\ell^2_{0}}(1+|x|^{2})\eta^{2}e^{-\frac{\theta_{0}}{8}\eta k}.
\end{align*}
Notice that
\begin{align*}
&\mathbb{E}\langle\nabla u_{k}(\hat{X}_{s}^{x})-\nabla u_{k}(x),\nabla P(x)\rangle\\
=&-\int_{0}^{s}\mathbb{E}\langle\nabla^{2} u_{k}(x),\nabla P(\hat{X}_{v}^{x})\big(\nabla P(x)\big)^{T}\rangle_{{\rm HS}}\dif v\\
&+\int_{0}^{1}\mathbb{E}\langle\nabla^{2} u_{k}\big(x+r(\hat{X}_{s}^{x}-x)\big)-\nabla^{2} u_{k}(x),(\hat{X}_{s}^{x}-x)\big(\nabla P(x)\big)^{T}\rangle_{{\rm HS}}\dif r.
\end{align*}
By (\ref{sec}), (\ref{Lip}), (\ref{shiftlin}) and (\ref{third}), we have
\begin{align*}
\mathcal{J}_{12}\leq&\big|\mathbb{E}\int_{0}^{\eta}\int_{0}^{s}\mathbb{E}\langle\nabla^{2} u_{k}(x),\big(\nabla P(\hat{X}_{v}^{x})-\nabla P(x)\big)\big(\nabla P(x)\big)^{T}\rangle_{{\rm HS}}\dif v\dif s\big|\\
&+\big|\int_{0}^{\eta}\int_{0}^{1}\mathbb{E}\langle\nabla^{2} u_{k}\big(x+r(\hat{X}_{s}^{x}-x)\big)-\nabla^{2} u_{k}(x),(\hat{X}_{s}^{x}-x)\big(\nabla P(x)\big)^{T}\rangle_{{\rm HS}}\dif r\dif s\big|\\
\leq&C_{\theta,\kappa,\ell^2_{0}}(1+|x|)(1+\frac{\sqrt{d}}{\delta})(1+\frac{1}{\sqrt{\eta^{2}k}})e^{-\frac{{\theta_{0}}}{8}\eta k}\int_{0}^{\eta}\int_{0}^{s}\mathbb{E}\big|\hat{X}_{v}^{x}-x\big|\dif v\dif s\\
&+C_{\theta,\kappa,\ell^2_{0}}(1+|x|)(1+\frac{d}{\delta^{2}})\big(1+\frac{1}{\eta^{2}k}+\frac{1}{(\eta k)^{\frac{5}{4}}}\big)e^{-\frac{\theta_{0}}{8} \eta k}\int_{0}^{\eta}\int_{0}^{1}r\mathbb{E}|\hat{X}_{s}^{x}-x|^{2}\dif r\dif s.
\end{align*}
Then, by the Cauchy-Schwarz inequality and (\ref{moment-1}), we can obtain
\begin{align*}
\mathcal{J}_{12}\leq&C_{\theta,\kappa,\ell^2_{0}}(1+|x|^{2})(1+\frac{\sqrt{d}}{\delta})(1+\frac{1}{\sqrt{\eta^{2}k}})e^{-\frac{{\theta_{0}}}{8}\eta k}\int_{0}^{\eta}\int_{0}^{s}(v^{2}+\eta v)^{\frac{1}{2}}\dif v\dif s\\
&+C_{\theta,\kappa,\ell^2_{0}}(1+|x|^{3})(1+\frac{d}{\delta^{2}})\big(1+\frac{1}{\eta^{2}k}+\frac{1}{(\eta k)^{\frac{5}{4}}}\big)e^{-\frac{\theta_{0}}{8} \eta k}\int_{0}^{\eta}\int_{0}^{1}r(s^{2}+\eta s)\dif r\dif s\\
\leq&C_{\theta,\kappa,\ell^2_{0}}(1+|x|^{3})(1+\frac{d}{\delta^{2}})\big(1+\frac{1}{\eta^{2}k}+\frac{1}{(\eta k)^{\frac{5}{4}}}\big)\eta^{3}e^{-\frac{\theta_{0}}{8} \eta k}.
\end{align*}
Hence, the fact $\eta\leq1$ implies
\begin{align}
\mathcal{J}_{1}\leq&C_{\theta,\kappa,\ell^2_{0}}(1+|x|^{3})(1+\frac{d}{\delta^{2}})\big(1+\frac{1}{\eta k}+\frac{\eta}{(\eta k)^{\frac{5}{4}}}\big)\eta^{2}e^{-\frac{\theta_{0}}{8} \eta k}\label{computation}.
\end{align}

For $\mathcal{J}_{2},$ recall $\eta\Sigma(x)=\mathbb{E}\big[V_{\eta}(x,I)V_{\eta}(x,\zeta)^{T}\big]$ with $V_{\eta}(x,\zeta)=\sqrt{\eta}\big(\nabla P(x)-\nabla\psi(x,\zeta)\big),$ we have
\begin{align*}
\eta\Sigma(x)-\eta\Sigma(y)=&\mathbb{E}\big[V_{\eta}(x,\zeta)\big(V_{\eta}(x,\zeta)-V_{\eta}(y,\zeta)\big)^{T}\big]+\mathbb{E}\big[\big(V_{\eta}(x,\zeta)-V_{\eta}(y,\zeta)\big)V_{\eta}(y,\zeta)^{T}\big].
\end{align*}
By (\ref{stofo}), the Cauchy-Schwarz inequality, (\ref{stolin}) and (\ref{Lip}), we further have
\begin{align}\label{diffuqua}
\mathbb{E}|V_{\eta}(x,\zeta)|^{2}=\eta\left(\mathbb{E}|\nabla\psi(x,\zeta)|^{2}-|\nabla P(x)|^{2}\right)\leq\eta\mathbb{E}|\nabla\psi(x,\zeta)|^{2}\leq2(\kappa^{2} |x|^{2}+\ell^{2}_{0})\eta,
\end{align}
\begin{align}
\mathbb{E}\big[|V_{\eta}(x,\zeta)|\big|V_{\eta}(x,\zeta)-V_{\eta}(y,\zeta)\big|\big]\leq&C_{\kappa,\ell^{2}_{0}}(1+|x|)|x-y|\eta\label{diffulip}.
\end{align}
Then, (\ref{diffuqua}), (\ref{third}), (\ref{diffulip}) and (\ref{sec}) imply
\begin{align*}
\mathcal{J}_{2}\leq&\frac{1}{2}\mathbb{E}\big|\int_{0}^{\eta}\langle\nabla^{2}u_{k}(\hat{X}_{s}^{x})-\nabla^{2}u_{k}(x),\eta\Sigma(x)\rangle_{{\rm HS}} \dif s\big|\\
&+\frac{1}{2}\mathbb{E}\big|\int_{0}^{\eta}\langle\nabla^{2}u_{k}(X_{s}^{x}),\eta\Sigma(\hat{X}_{s}^{x})-\eta\Sigma(x)\rangle_{{\rm HS}} \dif s\big|\\
\leq&C_{\theta,\kappa,\ell^2_{0}}\left(1+|x|^{2}\right)(1+\frac{d}{\delta^{2}})\big(1+\frac{1}{\eta^{2}k}+\frac{1}{(\eta k)^{\frac{5}{4}}}\big)e^{-\frac{\theta_{0}}{8} \eta k}\eta\int_{0}^{\eta}\mathbb{E}|\hat{X}_{s}^{x}-x|\dif s\\
&+C_{\theta,\kappa,\ell^2_{0}}(1+\frac{\sqrt{d}}{\delta})(1+\frac{1}{\sqrt{\eta^{2}k}})e^{-\frac{{\theta_{0}}}{8}\eta k}\eta\int_{0}^{\eta}\mathbb{E}\left[\big[(1+|x|+|\hat{X}_{s}^{x}|)|\hat{X}_{s}^{x}-x|\big]\right]\dif s,
\end{align*}
Following the Cauchy-Schwarz inequality, (\ref{moment}) and (\ref{moment-1}), by the same argument as the proof of (\ref{computation}), one has
\begin{align*}
\mathcal{J}_{2}\leq&C_{\theta,\kappa,\ell^2_{0}}(1+|x|^{3})(1+\frac{d}{\delta^{2}})\big(1+\frac{1}{\eta k}+\frac{\eta}{(\eta k)^{\frac{5}{4}}}\big)\eta^{2}e^{-\frac{\theta_{0}}{8} \eta k}.
\end{align*}

In addition, by (\ref{third}), H\"{o}lder's inequality and (\ref{stofo}), we have
\begin{align*}
\mathbb{E}|\mathcal{R}^{u_{k}}(x)|\leq&C_{\theta,\kappa,\ell^3_{0}}(1+\frac{d}{\delta^{2}})\big(1+\frac{1}{\eta^{2}k}+\frac{1}{(\eta k)^{\frac{5}{4}}}\big)e^{-\frac{\theta_{0}}{8} \eta k}\eta^{3}\int_{0}^{1}\int_{0}^{r}(1+|x|^{3})\dif s\dif r\\
\leq&C_{\theta,\kappa,\ell^3_{0}}(1+|x|^{3})(1+\frac{d}{\delta^{2}})\big(1+\frac{1}{\eta k}+\frac{\eta}{(\eta k)^{\frac{5}{4}}}\big)\eta^{2}e^{-\frac{\theta_{0}}{8} \eta k}.
\end{align*}

Combining all of above, we have
\begin{align*}
&\big|\mathbb{E}\int_{0}^{1}\!\!\big[\mathcal{A}^{X}u_{k}(X_{s}^{x})-\mathcal{A}^{Y} u_{k}(x)\big]\dif s\big|\\
\leq&C_{\theta,\kappa,\ell^3_{0}}(1+|x|^{3})(1+\frac{d}{\delta^{2}})\big(1+\frac{1}{\eta k}+\frac{\eta}{(\eta k)^{\frac{5}{4}}}\big)\eta^{2}e^{-\frac{\theta_{0}}{8} \eta k}.
\end{align*}
\qed

\section{Proof of Lemma \ref{mainlem1}}\label{Mal}

Under {\bf Assumption A1}, we recall some preliminary of {Malliavin}
calculus and derive standard estimates related to Malliavin calculus and SDE, which will be applied to prove the
Lemma \ref{mainlem1} in {Subsection \ref{proofsgd}}.

\subsection{Malliavin calculus of SDE (\ref{SME})}
For simplicity, denote $B(x):=-\nabla P(x),$ $\sigma(x):=\left(\Sigma(x)\right)^{\frac{1}{2}}$. Then SDE
(\ref{SME}) can be written as the following form:
\begin{align}\label{ASDE}
d\hat{X}_{t}=B(\hat{X}_{t})dt+\sqrt{\eta}\sigma(\hat{X}_{t})dB_{t},\quad \hat{X}_{0}=x.
\end{align}

Moreover, {\bf Assumption A1} in {Subsection} \ref{SMESGD} can be rewritten as the following form:

{\bf Assumption A1} (i) There
exist $\theta_{0}>0$ and $\theta_{1},\theta_{2},\theta_{3},\theta_{4},\theta_{5}\geq0$ such that for any $v_{1},v_{2},v_{3},x\in\mathbb{R}^{d}$, $\nabla P(x)$ satisfies
\begin{align}\label{Adiss}
\langle v_{1},\nabla_{v_{1}}B(x)\rangle\leq-\theta_{0}|v_{1}|^{2}, \quad |\nabla_{v_{1}}\nabla_{v_{2}}B(x)|\leq\theta_{1}|v_{1}||v_{2}|,
\end{align}
\begin{align}\label{Aonthird}
|\nabla_{v_{1}}\nabla_{v_{2}}\nabla_{v_{3}}B(x)|\leq\theta_{2}|v_{1}||v_{2}|;
\end{align}
and that any $x,y\in\mathbb{R}^{d}$, $\sigma(x)$ satisfies
\begin{align}\label{diffusion1}
\|\nabla_{v_{1}}\sigma(x)\|_{{\rm HS}}\leq\theta_{3}|v_{1}|, \quad \|\nabla_{v_{2}}\nabla_{v_{1}}\sigma(x)\|_{{\rm HS}}\leq\theta_{4}|v_{1}||v_{2}|,
\end{align}
\begin{align}\label{diffusion2}
\|\nabla_{v_{3}}\nabla_{v_{2}}\nabla_{v_{1}}\sigma(x)\|_{{\rm HS}}\leq\theta_{5}|v_{1}||v_{2}||v_{3}|.
\end{align}

(ii) There exists $\delta>0$ such that for any $x\in\mathbb{R}^{d}$ and non-zero vector $\xi\in\mathbb{R}^{d}$, $\sigma(x)$ satisfies
\begin{align}\label{definite}
\xi^{T}\sigma(x)\xi\geq\delta|\xi|^{2}.
\end{align}

Under the {\bf Assumption A1}, there exists a unique solution to the SDE (\ref{ASDE}) and the SDE (\ref{ASDE}) has a
unique non-degenerate invariant measure (see, e.g., \cite{BR95,Cer96,DPG01,Eva14}).

Next, we briefly recall Bismut's approach to Malliavin calculus, which is crucial to prove Lemma \ref{mainlem1}. Let $v\in\mathbb{R}^{d}$
and $\nabla_{v}\hat{X}_{t}^{x}$ is defined by
\begin{align*}
\nabla_{v}\hat{X}_{t}^{x}=\lim_{\epsilon\rightarrow0}\frac{\hat{X}_{t}^{x+\epsilon
v}-\hat{X}_{t}^{x}}{\epsilon},\quad t\geq0.
\end{align*}
The above limit exists and satisfies
\begin{align}\label{gradient SDE}
d\nabla_{v}\hat{X}_{t}^{x}=\nabla B(\hat{X}_{t}^{x})\nabla_{v}\hat{X}_{t}^{x}dt+\sqrt{\eta}\nabla\sigma(\hat{X}_{t})\nabla_{v}\hat{X}_{t}^{x}dB_{t}, \quad
\nabla_{v}\hat{X}_{0}^{x}=v.
\end{align}
Then, we use the notations $J_{s,t}^{x}$ with $0\leq s\leq t<\infty$ for the stochastic flow between time $s$ and $t,$ that is,
\begin{align*}
\nabla_{v}\hat{X}_{t}^{x}=J_{0,t}^{x}v.
\end{align*}
Note that we have the important cocycle property $J_{0,s}^{x}J_{s,t}^{x}=J_{0,t}^{x}$ for all $0\leq s\leq t<\infty$. For a more thorough discussion on stochastic flow, we refer the reader to \cite{K84,K97,B99,HM06} and the references therein.

For $v_{1},v_{2}\in\mathbb{R}^{d},$ we can define $\nabla_{v_{2}}\nabla_{v_{1}}\hat{X}_{t}^{x},$ which satisfies
\begin{align}\label{secequation}
d\nabla_{v_{2}}\nabla_{v_{1}}\hat{X}_{t}^{x}=&\nabla B(\hat{X}_{t}^{x})\nabla_{v_{2}}\nabla_{v_{1}}\hat{X}_{t}^{x}dt+\nabla^{2}B(\hat{X}_{t}^{x})\nabla_{v_{2}}\hat{X}_{t}^{x}\nabla_{v_{1}}\hat{X}_{t}^{x}dt\nonumber\\
&+\sqrt{\eta}\nabla\sigma(\hat{X}_{t}^{x})\nabla_{v_{2}}\nabla_{v_{1}}\hat{X}_{t}^{x}dB_{t}+\sqrt{\eta}\nabla^{2}\sigma(\hat{X}_{t}^{x})\nabla_{v_{2}}\hat{X}_{t}^{x}\nabla_{v_{1}}\hat{X}_{t}^{x}dB_{t},
\end{align}
with $\nabla_{v_{2}}\nabla_{v_{1}}\hat{X}_{0}^{x}=0.$ Moreover, For $v_{1},v_{2},v_{3}\in\mathbb{R}^{d},$ we can similarly define $\nabla_{v_{3}}\nabla_{v_{2}}\nabla_{v_{1}}\hat{X}_{t}^{x}$ from above equation. Then, we have the following estimates:

\begin{lemma}
For all  $x,v,v_{1},v_{2},v_{3}\in\mathbb{R}^{d}$, as $\eta\leq\min\{1,\frac{\theta_{0}}{4\theta_{3}^{2}}\}$, we have
\begin{align}\label{grad3}
\mathbb{E}|\nabla_{v}\hat{X}_{t}^{x}|^{8}\leq e^{-\theta_{0} t}|v|^{8},
\end{align}
\begin{align}\label{gradsec}
\mathbb{E}|\nabla_{v_{2}}\nabla_{v_{1}}\hat{X}_{t}^{x}|^{4}
\leq C_{\theta}e^{-\frac{\theta_{0}}{2}t}|v_{1}|^{4}|v_{2}|^{4},
\end{align}
and
\begin{align}\label{gradthird}
\mathbb{E}|\nabla_{v_{3}}\nabla_{v_{2}}\nabla_{v_{1}}\hat{X}_{t}^{x}|^{2}
\leq C_{\theta}e^{-\frac{\theta_{0}}{4}t}|v_{1}|^{2}|v_{2}|^{2}|v_{3}|^{2}.
\end{align}
\end{lemma}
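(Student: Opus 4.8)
The plan is to establish the three bounds in the order stated — first variation, then second, then third — since each higher-order estimate uses the lower-order ones to control the inhomogeneous terms in its defining SDE. The common device is It\^o's formula applied to a suitable even power of the variation process, followed by a Gronwall comparison: the dissipativity assumption \eqref{Adiss} produces a strictly negative linear term, the structural bounds \eqref{Aonthird}--\eqref{diffusion2} dominate the remaining drift and It\^o-correction terms, and the smallness of $\eta$ keeps the diffusion's It\^o correction from destroying the dissipativity. Concretely, for \eqref{grad3} I would apply It\^o's formula to $t\mapsto|\nabla_v\hat{X}_t^x|^8$ along the linear equation \eqref{gradient SDE}: the drift $8|\nabla_v\hat{X}_t^x|^6\langle\nabla_v\hat{X}_t^x,\nabla_{\nabla_v\hat{X}_t^x}B(\hat{X}_t^x)\rangle$ is $\le-8\theta_0|\nabla_v\hat{X}_t^x|^8$ by \eqref{Adiss}, the stochastic integral is a local martingale, and the It\^o correction is, by \eqref{diffusion1}, a bounded multiple of $\eta\theta_3^2|\nabla_v\hat{X}_t^x|^8$. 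After a routine localization (stopping times plus Fatou's lemma) to kill the martingale in expectation one gets $\tfrac{d}{dt}\mathbb{E}|\nabla_v\hat{X}_t^x|^8\le(-8\theta_0+c\eta\theta_3^2)\mathbb{E}|\nabla_v\hat{X}_t^x|^8$, and $\eta\le\theta_0/(4\theta_3^2)$ makes the coefficient $\le-\theta_0$; Gronwall gives \eqref{grad3}, and by Jensen $\mathbb{E}|\nabla_v\hat{X}_t^x|^p\le e^{-p\theta_0 t/8}|v|^p$ for $1\le p\le 8$, to be used below.

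For \eqref{gradsec} I would apply It\^o's formula to $t\mapsto|\nabla_{v_2}\nabla_{v_1}\hat{X}_t^x|^4$ along \eqref{secequation}. The linear drift again yields $-4\theta_0|\nabla_{v_2}\nabla_{v_1}\hat{X}_t^x|^4$; the inhomogeneous drift (carrying $\nabla^2 B$) is bounded by \eqref{Adiss} by a multiple of $|\nabla_{v_2}\nabla_{v_1}\hat{X}_t^x|^3|\nabla_{v_1}\hat{X}_t^x||\nabla_{v_2}\hat{X}_t^x|$, and the It\^o correction from the two diffusion terms is bounded by \eqref{diffusion1}--\eqref{diffusion2} by a multiple of $\eta|\nabla_{v_2}\nabla_{v_1}\hat{X}_t^x|^2\big(\theta_3^2|\nabla_{v_2}\nabla_{v_1}\hat{X}_t^x|^2+\theta_4^2|\nabla_{v_1}\hat{X}_t^x|^2|\nabla_{v_2}\hat{X}_t^x|^2\big)$. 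Young's inequality, applied with a small parameter on the $|\nabla_{v_2}\nabla_{v_1}\hat{X}_t^x|^4$ side, together with $\eta\le\theta_0/(4\theta_3^2)$, absorbs all of these back into the dissipative term except a forcing term that is a $C_\theta$-multiple of $|\nabla_{v_1}\hat{X}_t^x|^4|\nabla_{v_2}\hat{X}_t^x|^4$, whose expectation is $\le C_\theta e^{-\theta_0 t}|v_1|^4|v_2|^4$ by Cauchy--Schwarz and Step~1. Since $\nabla_{v_2}\nabla_{v_1}\hat{X}_0^x=0$, solving the resulting scalar linear inequality yields \eqref{gradsec}.

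For \eqref{gradthird} I would first differentiate \eqref{secequation} once more in the direction $v_3$: the drift of $\nabla_{v_3}\nabla_{v_2}\nabla_{v_1}\hat{X}_t^x$ is $\nabla B(\hat{X}_t^x)\nabla_{v_3}\nabla_{v_2}\nabla_{v_1}\hat{X}_t^x$ plus three terms of the form $\nabla^2 B(\hat{X}_t^x)(\nabla_{v_i}\hat{X}_t^x)(\nabla_{v_j}\nabla_{v_k}\hat{X}_t^x)$ and one term $\nabla^3 B(\hat{X}_t^x)(\nabla_{v_3}\hat{X}_t^x)(\nabla_{v_2}\hat{X}_t^x)(\nabla_{v_1}\hat{X}_t^x)$, with the diffusion part having the analogous structure with $\sigma$ in place of $B$. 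Applying It\^o's formula to $|\nabla_{v_3}\nabla_{v_2}\nabla_{v_1}\hat{X}_t^x|^2$ and proceeding exactly as in the previous step — \eqref{Adiss}, \eqref{Aonthird}, \eqref{diffusion1}--\eqref{diffusion2}, $\eta\le\theta_0/(4\theta_3^2)$, and Young's inequality — one is left with a forcing term that is a $C_\theta$-multiple of expressions such as $|\nabla_{v_3}\hat{X}_t^x|^2|\nabla_{v_2}\nabla_{v_1}\hat{X}_t^x|^2$ and $|\nabla_{v_1}\hat{X}_t^x|^2|\nabla_{v_2}\hat{X}_t^x|^2|\nabla_{v_3}\hat{X}_t^x|^2$; by Cauchy--Schwarz and H\"older together with \eqref{grad3} and \eqref{gradsec}, these have expectation $\le C_\theta e^{-\theta_0 t/2}|v_1|^2|v_2|^2|v_3|^2$. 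Gronwall (starting from $0$) then produces a bound $\le C_\theta(1+t)e^{-\theta_0 t/2}|v_1|^2|v_2|^2|v_3|^2\le C_\theta e^{-\theta_0 t/4}|v_1|^2|v_2|^2|v_3|^2$, which is \eqref{gradthird}.

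The step I expect to be the main obstacle is the final accounting. After the Young splittings and after dominating the diffusion's It\^o correction, one must verify that the coefficient multiplying the negative power of the variation process stays strictly negative and, for the second and third variations, that its modulus is at least the exponential rate of the forcing term — allowing at most a harmless polynomial factor in $t$ in the borderline case of the third variation, which is why the stated rate there is $\theta_0/4$ rather than $\theta_0/2$. This is precisely where the quantitative smallness $\eta\le\min\{1,\theta_0/(4\theta_3^2)\}$ is essential: it prevents the $\theta_3$-type It\^o corrections from eroding the dissipativity constant $\theta_0$ too much, whereas the remaining error terms can be absorbed at no cost to the decay rate, only to the constant $C_\theta$. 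The one further technicality, justifying that the stochastic integrals are genuine (not merely local) martingales, is handled by the localization argument already mentioned, passing to the limit by Fatou once the uniform-in-$n$ moment bound is in hand.
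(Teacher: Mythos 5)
Your proposal is correct and follows essentially the same route as the paper: It\^o's formula applied to $|\nabla_v\hat X^x_t|^8$, $|\nabla_{v_2}\nabla_{v_1}\hat X^x_t|^4$ and $|\nabla_{v_3}\nabla_{v_2}\nabla_{v_1}\hat X^x_t|^2$ along the variation equations, with the dissipativity in \eqref{Adiss} absorbing the It\^o corrections and inhomogeneous terms via Young's inequality under $\eta\le\theta_0/(4\theta_3^2)$, and a Gronwall comparison against a forcing term controlled by the previously established lower-order bounds. Your accounting of the constants (e.g.\ that the coefficient in the first step becomes $-4(2\theta_0-7\theta_3^2\eta)\le-\theta_0$) and of the successive halving of the decay rate matches the paper's computation, and your explicit remark on localization/Fatou is a detail the paper leaves implicit.
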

\begin{proof}
Recalling (\ref{gradient SDE}), by It\^{o}'s formula, (\ref{Adiss}) and (\ref{diffusion1}), we have
\begin{align*}
\frac{d}{ds}\mathbb{E}|\nabla_{v}\hat{X}_{s}^{x}|^{8}=&8\mathbb{E}[|\nabla_{v}\hat{X}_{s}^{x}|^{6}\langle\nabla B(\hat{X}_{s}^{x})\nabla_{v}\hat{X}_{s}^{x},\nabla_{v}\hat{X}_{s}^{x}\rangle]+4\eta\mathbb{E}[|\nabla_{v}\hat{X}_{s}^{x}|^{6}\|\nabla\sigma(\hat{X}_{s}^{x})\nabla_{v}\hat{X}_{s}^{x}\|_{{\rm HS}}^{2}]\\
&+24\eta\mathbb{E}[|\nabla_{v}\hat{X}_{s}^{x}|^{4}|\nabla\sigma(\hat{X}_{s}^{x})\nabla_{v}\hat{X}_{s}^{x}\nabla_{v}\hat{X}_{s}^{x}|^{2}]\\
\leq&-4(2\theta_{0}-7\theta_{3}^{2}\eta)\mathbb{E}[|\nabla_{v}\hat{X}_{s}^{x}|^{8}]\leq\theta_{0}\mathbb{E}[|\nabla_{v}\hat{X}_{s}^{x}|^{8}],
\end{align*}
where the last inequality is by the fact $\eta\leq\frac{\theta_{0}}{4\theta_{3}^{2}}$. This inequality, together with $\nabla_{v}X_{0}^{x}=v,$ implies
\begin{align*}
\mathbb{E}|\nabla_{v}\hat{X}_{t}^{x}|^{8}\leq e^{-\theta_{0} t}|v|^{8}.
\end{align*}

Using It\^{o}'s formula to $\varsigma(t)=\nabla_{v_{2}}\nabla_{v_{1}}\hat{X}_{t}^{x}$, by \eqref{secequation}, the Cauchy-Schwarz inequality, and {\bf Assumption A1}, we have
\begin{align*}
\frac{d}{ds}\mathbb{E}|\varsigma(s)|^{4}=&4\mathbb{E}\big[|\varsigma(s)|^{2}\langle\nabla B(\hat{X}_{s}^{x})\varsigma(s)+\nabla^{2}B(\hat{X}_{s}^{x})\nabla_{v_{2}}\hat{X}_{s}^{x}\nabla_{v_{1}}\hat{X}_{s}^{x},\varsigma(s)\rangle\big]\\
&+2\eta\mathbb{E}\big[|\varsigma(s)|^{2}\|\nabla\sigma(\hat{X}_{s}^{x})\varsigma(s)+\nabla^{2}\sigma(\hat{X}_{s}^{x})\nabla_{v_{2}}\hat{X}_{s}^{x}\nabla_{v_{1}}\hat{X}_{s}^{x}\|_{{\rm HS}}^{2}\big]\\
&+4\eta\mathbb{E}\big[|(\nabla\sigma(\hat{X}_{s}^{x})\varsigma(s)+\nabla^{2}\sigma(\hat{X}_{s}^{x})\nabla_{v_{2}}\hat{X}_{s}^{x}\nabla_{v_{1}}\hat{X}_{s}^{x})\varsigma(s)|^{2}\big]\\
\leq&-4\theta_{0}\mathbb{E}|\varsigma(s)|^{4}+4\theta_{1}\mathbb{E}[|\nabla_{v_{2}}\hat{X}_{s}^{x}||\nabla_{v_{1}}\hat{X}_{s}^{x}||\varsigma(s)|^{3}]\\
&+12\eta\mathbb{E}\big[|\varsigma(s)|^{2}\big(\|\nabla\sigma(\hat{X}_{s}^{x})\varsigma(s)\|_{{\rm HS}}^{2}+\|\nabla^{2}\sigma(\hat{X}_{s}^{x})\nabla_{v_{2}}\hat{X}_{s}^{x}\nabla_{v_{1}}\hat{X}_{s}^{x}\|_{{\rm HS}}^{2}\big)\big]\\
\leq&-4(\theta_{0}-3\theta_{3}^{2}\eta)\mathbb{E}|\varsigma(s)|^{4}+4\theta_{1}\mathbb{E}[|\nabla_{v_{2}}\hat{X}_{s}^{x}||\nabla_{v_{1}}\hat{X}_{s}^{x}||\varsigma(s)|^{3}]\\
&+12\theta_{4}^{2}\eta\mathbb{E}\big[|\varsigma(s)|^{2}|\nabla_{v_{2}}\hat{X}_{s}^{x}|^{2}|\nabla_{v_{1}}X_{s}^{x}|^{2}\big].
\end{align*}
By Young's inequality, Cauchy-Schwarz inequality and (\ref{grad3}), the fact $\eta\leq\min\{1,\frac{\theta_{0}}{4\theta_{3}^{2}}\}$ implies
\begin{align*}
\frac{d}{ds}\mathbb{E}|\varsigma(s)|^{4}\leq&-4(\frac{7}{8}\theta_{0}-3\theta_{3}^{2}\eta)\mathbb{E}|\varsigma(s)|^{4}
+\frac{12^{3}\theta_{1}^{4}}{\theta_{0}^{3}}\mathbb{E}[|\nabla_{v_{2}}\hat{X}_{s}^{x}|^{4}|\nabla_{v_{1}}\hat{X}_{s}^{x}|^{4}]\\
&+\frac{12^{2}\theta_{4}^{4}}{\theta_{0}}\mathbb{E}\big[|\nabla_{v_{2}}\hat{X}_{s}^{x}|^{4}|\nabla_{v_{1}}X_{s}^{x}|^{4}\big]\\
\leq&-\frac{\theta_{0}}{2}\mathbb{E}|\varsigma(s)|^{4}+C_{\theta}e^{-\theta_{0}t}|v_{1}|^{4}|v_{2}|^{4}.
\end{align*}
This inequality, together with $\varsigma(0)=0,$ implies
\begin{align*}
\mathbb{E}|\varsigma(t)|^{4}\leq C_{\theta}\int_{0}^{t}e^{-\theta_{0}s}|v_{1}|^{4}|v_{2}|^{4}e^{-\frac{\theta_{0}}{2}(t-s)}ds\leq C_{\theta}e^{-\frac{\theta_{0}}{2}t}|v_{1}|^{4}|v_{2}|^{4}.
\end{align*}

Furthermore, according to (\ref{Adiss})-(\ref{definite}), a similar calculation implies (\ref{gradthird}).
\end{proof}

Next, we use Bismut's approach to Malliavin calculus for SDE (\ref{SDE})(\cite{Nor86}). Let $u\in L_{loc}^{2}([0,\infty)\times(\Omega,\mathcal{F},\mathbb{P});\mathbb{R}^{d}),$ i.e., $\mathbb{E}\int_{0}^{t}|u(s)|^{2}\dif s<\infty$ for all $t>0.$ Further assume that $u$ is adapted to the filtration $(\mathcal{F}_{t})_{t\geq0}$ with $\mathcal{F}_{t}:=\sigma(B_{s}:0\leq s\leq t);$ i.e., $u(t)$ is $\mathcal{F}_{t}$ measurable for $t\geq0.$ Define
\begin{align}\label{bismutf}
U_{t}=\int_{0}^{t}u(s)\dif s,\quad t\geq0.
\end{align}
For a $t>0,$ let $F_{t}:\mathcal{C}([0,t],\mathbb{R}^{d})\rightarrow\mathbb{R}$ be a $\mathcal{F}_{t}$ measurable map. If the following limit exists
\begin{align*}
D_{U}F_{t}(B)=\lim_{\epsilon\rightarrow0}\frac{F_{t}(B+\epsilon U)-F_{t}(B)}{\epsilon}
\end{align*}
in $L^{2}((\Omega,\mathcal{F},\mathbb{P});\mathbb{R}),$ then $F_{t}(B)$ is said to be {\it Malliavin differentiable} and $D_{U}F_{t}(B)$ is called the Malliavin derivative of $F_{t}(B)$ in the direction $U$.

Let $F_{t}(B)$ and $G_{t}(B)$ both be Malliavin differentiable, then the following product rule holds:
\begin{align}\label{chain}
D_{U}(F_{t}(B)G_{t}(B))=F_{t}(B)D_{U}G_{t}(B)+G_{t}(B)D_{U}F_{t}(B).
\end{align}
When
\begin{align*}
F_{t}(B)=\int_{0}^{t}\langle a(s),\dif B(s)\rangle,
\end{align*}
where $a(s)=(a_{1}(s),\cdots,a_{d}(s))$ is a stochastic process adapted to the filtration $\mathcal{F}_{s}$ such that $\mathbb{E}\int_{0}^{t}|a(s)|^{2}\dif s<\infty$ for all $t>0$, it is easy to check that
\begin{align}\label{chain2}
D_{U}F_{t}(B)=\int_{0}^{t}\langle a(s),u(s)\rangle \dif s+\int_{0}^{t}\langle D_{U}a(s),\dif B_{s}\rangle.
\end{align}

Then, we consider the following integration by parts formula, which is called Bismut's formula. For Malliavin differentiable $F_{t}(B)$ such that $F_{t}(B),$ $D_{U}F_{t}(B)\in L^{2}((\Omega,\mathcal{F},\mathbb{P});\mathbb{R}),$ we have
\begin{align}\label{bismut}
\mathbb{E}[D_{U}F_{t}(B)]=\mathbb{E}\big[F_{t}(B)\int_{0}^{t}\langle u(s),\dif B_{s}\rangle\big].
\end{align}

Let $\phi\in {\rm Lip}(1)$ and let $F_{t}(B)=(F_{t}^{1}(B),\cdots,F_{t}^{d}(B))$ be a $d$-dimensional Malliavin differentiable functional. The following chain rule holds:
\begin{align*}
D_{U}\phi(F_{t}(B))=\langle\nabla\phi(F_{t}(B)),D_{U}F_{t}(B)\rangle=\sum_{i=1}^{d}\partial_{i}\phi(F_{t}(B))D_{U}F_{t}^{i}(B).
\end{align*}

Now, we come back to the SDE (\ref{SME}). Fixing $t\geq0$ and $x\in\mathbb{R}^{d},$ the solution $X_{t}^{x}$ is a $d$-dimensional functional of Brownian motion $(B_{s})_{0\leq s\leq t}.$

The following Malliavin derivative of $\hat{X}_{t}^{x}$ along the direction $U$ exists in $L^{2}((\Omega,\mathcal{F},\mathbb{P});\mathbb{R}^{d})$ and is defined by
\begin{align*}
D_{U}\hat{X}_{t}^{x}(B)=\lim_{\epsilon\rightarrow0}\frac{\hat{X}_{t}^{x}(B+\epsilon U)-\hat{X}_{t}^{x}(B)}{\epsilon}.
\end{align*}
We drop the $B$ in $D_{U}\hat{X}_{t}^{x}(B)$ and write $D_{U}\hat{X}_{t}^{x}=D_{U}\hat{X}_{t}^{x}(B)$ for simplicity. It satisfies the equation
\begin{align*}
\dif D_{U}\hat{X}_{t}^{x}=\nabla B(\hat{X}_{t}^{x})D_{U}\hat{X}_{t}^{x}\dif t+\eta^{\frac{1}{2}}\nabla\sigma(\hat{X}_{t}^{x})D_{U}\hat{X}_{t}^{x}\dif B_{t}+\eta^{\frac{1}{2}}\sigma(\hat{X}_{t}^{x})u(t)\dif t, \quad D_{U}\hat{X}_{0}^{x}=0,
\end{align*}
and the equation has a unique solution:
\begin{align*}
D_{U}\hat{X}_{t}^{x}=\int_{0}^{t}J_{r,t}^{x}\eta^{\frac{1}{2}}\sigma(\hat{X}_{r}^{x})u(r)\dif r.
\end{align*}
Noticing that $\nabla_{v}\hat{X}_{t}^{x}=J_{0,t}^{x}v,$ if we take
\begin{align}\label{malliavin1}
u(s)=\frac{1}{t}\eta^{-\frac{1}{2}}\sigma(\hat{X}_{s}^{x})^{-1}\nabla_{v}\hat{X}_{s}^{x},\quad 0\leq s\leq t,
\end{align}
then (\ref{definite}) and (\ref{grad3}) imply $u\in L_{loc}^{2}([0,\infty)\times(\Omega,\mathcal{F},\mathbb{P});\mathbb{R}^{d}).$ Since $\nabla_{v}\hat{X}_{r}^{x}=J_{0,r}^{x}v$ and $J_{0,r}^{x}J_{r,t}^{x}=J_{0,t}^{x},$ for all $0\leq r\leq t,$ we have
\begin{align}\label{malliavin2}
D_{U}\hat{X}_{t}^{x}=\nabla_{v}\hat{X}_{t}^{x}
\end{align}
and
\begin{align}\label{ma1}
D_{U}\hat{X}_{s}^{x}=\frac{s}{t}\nabla_{v}\hat{X}_{s}^{x},\quad 0\leq s\leq t.
\end{align}
Let $v_{1},v_{2}\in\mathbb{R}^{d},$ and define $u_{i}$ and $U_{i}$ as (\ref{malliavin1}) and (\ref{bismutf}), respectively, for $i=1,2.$ We can similarly define $D_{U_{2}}\nabla_{v_{1}}\hat{X}_{s}^{x},$ which satisfies the following equation: for $s\in[0,t],$
\begin{align}\label{malsec}
&\dif D_{U_{2}}\nabla_{v_{1}}\hat{X}_{s}^{x}\nonumber\\
=&\big[\nabla B(\hat{X}_{s}^{x})D_{U_{2}}\nabla_{v_{1}}\hat{X}_{s}^{x}+\nabla^{2}B(\hat{X}_{s}^{x})D_{U_{2}}\hat{X}_{s}^{x}\nabla_{v_{1}}\hat{X}_{s}^{x}
+\eta^{\frac{1}{2}}\nabla\sigma(\hat{X}_{s}^{x})\nabla_{v_{1}}\hat{X}_{s}^{x}u_{2}(s)\big]\dif s\nonumber\\
&+\eta^{\frac{1}{2}}\big[\nabla\sigma(\hat{X}_{s}^{x})D_{U_{2}}\nabla_{v_{1}}\hat{X}_{s}^{x}
+\nabla^{2}\sigma(\hat{X}_{s}^{x})D_{U_{2}}\hat{X}_{s}^{x}\nabla_{v_{1}}\hat{X}_{s}^{x}\big]\dif B_{s}\nonumber\\
=&\big[\nabla B(\hat{X}_{s}^{x})D_{U_{2}}\nabla_{v_{1}}\hat{X}_{s}^{x}+\frac{s}{t}\nabla^{2}B(\hat{X}_{s}^{x})\nabla_{v_{2}}\hat{X}_{s}^{x}\nabla_{v_{1}}\hat{X}_{s}^{x}\nonumber\\
&\qquad\qquad\qquad\qquad\qquad+\frac{1}{t}\nabla\sigma(\hat{X}_{s}^{x})\nabla_{v_{1}}\hat{X}_{s}^{x}\sigma(\hat{X}_{s}^{x})^{-1}\nabla_{v_{2}}\hat{X}_{s}^{x}\big]\dif s\nonumber\\
&+\eta^{\frac{1}{2}}\big[\nabla\sigma(\hat{X}_{s}^{x})D_{U_{2}}\nabla_{v_{1}}\hat{X}_{s}^{x}
+\frac{s}{t}\nabla^{2}\sigma(\hat{X}_{s}^{x})\nabla_{v_{2}}\hat{X}_{s}^{x}\nabla_{v_{1}}\hat{X}_{s}^{x}\big]\dif B_{s},
\end{align}
with $D_{U_{2}}\nabla_{v_{1}}\hat{X}_{0}^{x}=0,$ where the second equality is by (\ref{malliavin1}) and (\ref{ma1}).

For further use, we define
\begin{align*}
\mathcal{I}_{v_{1}}^{x}(t):=\frac{1}{t}\int_{0}^{t}\langle \eta^{-\frac{1}{2}}\sigma(\hat{X}_{s}^{x})^{-1}\nabla_{v_{1}}\hat{X}_{s}^{x},\dif B_{s}\rangle,
\end{align*}
\begin{align*}
\mathcal{R}_{v_{1},v_{2}}^{x}(t):=\nabla_{v_{2}}\nabla_{v_{1}}\hat{X}_{t}^{x}-D_{U_{2}}\nabla_{v_{1}}\hat{X}_{t}^{x}.
\end{align*}

Then, we have the following upper bounds on Malliavin derivatives.

\begin{lemma}
Let $v_{1},v_{2}\in\mathbb{R}^{d}$ and
\begin{align*}
U_{i,s}=\int_{0}^{s}u_{i}(r)\dif r, \quad 0\leq s\leq t,
\end{align*}
where $u_{i}(r)=\frac{1}{t}\eta^{-\frac{1}{2}}\sigma(\hat{X}_{r}^{x})^{-1}\nabla_{v_{i}}\hat{X}_{r}^{x}$ for $0\leq r\leq t$ and $i=1,2.$ Then, as $\eta\leq\min\{1,\frac{{\theta_{0}}}{48\theta_{3}^{2}}\},$ we have
\begin{align}\label{malgrad1}
\mathbb{E}|D_{U_{2}}\nabla_{v_{1}}\hat{X}_{s}^{x}|^{2}\leq C_{\theta}(1+\frac{d}{\delta^{2}t})e^{-\frac{{\theta_{0}}}{4}s}|v_{1}|^{2}|v_{2}|^{2}
\end{align}
and
\begin{align}\label{malgrad2}
\mathbb{E}|D_{U_{2}}\nabla_{v_{1}}\hat{X}_{s}^{x}|^{4}\leq C_{\theta}\big(1+\frac{d^{2}}{\delta^{4}t^{3}}\big)e^{-\frac{{\theta_{0}}}{2}s}|v_{1}|^{4}|v_{2}|^{4}.
\end{align}
\end{lemma}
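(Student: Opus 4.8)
The plan is to estimate the moments of $\Theta_s:=D_{U_2}\nabla_{v_1}\hat X_s^x$ by the same It\^{o}--dissipativity--Gronwall scheme used to prove the flow bounds (\ref{grad3})--(\ref{gradthird}). By (\ref{malsec}) (together with (\ref{ma1})), on $[0,t]$ the process $\Theta_s$ solves a \emph{linear} SDE driven by $\hat X$ with $\Theta_0=0$: the drift is $\nabla B(\hat X_s^x)\Theta_s$ plus the two exogenous terms $\tfrac st\nabla^2 B(\hat X_s^x)\nabla_{v_2}\hat X_s^x\nabla_{v_1}\hat X_s^x$ and $\tfrac1t\nabla\sigma(\hat X_s^x)\nabla_{v_1}\hat X_s^x\,\sigma(\hat X_s^x)^{-1}\nabla_{v_2}\hat X_s^x$, and the $\dif B_s$-coefficient is $\sqrt\eta\,\nabla\sigma(\hat X_s^x)\Theta_s$ plus the exogenous $\sqrt\eta\,\tfrac st\nabla^2\sigma(\hat X_s^x)\nabla_{v_2}\hat X_s^x\nabla_{v_1}\hat X_s^x$. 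Thus the whole argument reduces to a moment estimate for a linear SDE whose homogeneous part is strictly contracting and whose forcing is governed by the flow $\nabla_{v_i}\hat X$, the ellipticity constant $\delta$, and the singular prefactor $t^{-1}$.

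For (\ref{malgrad1}) I would apply It\^{o}'s formula to $|\Theta_s|^2$. The term $2\langle\Theta_s,\nabla B(\hat X_s^x)\Theta_s\rangle\le-2\theta_0|\Theta_s|^2$ by (\ref{Adiss}), while the It\^{o} correction from $\sqrt\eta\,\nabla\sigma(\hat X_s^x)\Theta_s$ is at most $2\eta\theta_3^2|\Theta_s|^2$ by (\ref{diffusion1}); since $\eta\le\theta_0/(48\theta_3^2)$ these combine into a net $\le-c\,|\Theta_s|^2$ with $c$ a fixed multiple of $\theta_0$. The cross terms with the two exogenous drift pieces are controlled, using (\ref{Adiss}), (\ref{diffusion1}), the ellipticity (\ref{definite}) (which yields $\|\sigma(x)^{-1}\|_{{\rm op}}\le\delta^{-1}$, hence $\|\sigma(x)^{-1}\|_{{\rm HS}}\le\sqrt d\,\delta^{-1}$), and $s\le t$, by a fixed constant times $\big(\theta_1+\tfrac{\theta_3}{\delta t}\big)|\Theta_s|\,|\nabla_{v_1}\hat X_s^x|\,|\nabla_{v_2}\hat X_s^x|$, and the extra It\^{o} correction from the exogenous diffusion piece is $\le2\eta\theta_4^2|\nabla_{v_1}\hat X_s^x|^2|\nabla_{v_2}\hat X_s^x|^2$. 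Young's inequality then absorbs every $|\Theta_s|$ into $\tfrac c2|\Theta_s|^2$ and leaves a forcing term which, via Cauchy--Schwarz, the eighth-moment flow bound (\ref{grad3}) (so that $\E\big[|\nabla_{v_1}\hat X_s^x|^2|\nabla_{v_2}\hat X_s^x|^2\big]\le e^{-\theta_0 s/2}|v_1|^2|v_2|^2$), and a Cauchy--Schwarz in time over $[0,s]\subseteq[0,t]$ that exchanges one power of $t^{-1}$ for the length of the integration interval, is of size $C_{\theta}\big(1+\tfrac d{\delta^2 t}\big)e^{-\theta_0 s/4}|v_1|^2|v_2|^2$. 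Solving $\tfrac{\dif}{\dif s}\E|\Theta_s|^2\le-c\,\E|\Theta_s|^2+(\text{forcing})$ with $\Theta_0=0$ gives (\ref{malgrad1}).

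The bound (\ref{malgrad2}) is obtained by running the same computation on $|\Theta_s|^4$: It\^{o}'s formula produces $4\E\big[|\Theta_s|^2\langle\Theta_s,\text{drift}\rangle\big]+2\E\big[|\Theta_s|^2\|\text{diffusion}\|_{{\rm HS}}^2\big]+4\E\big[|(\text{diffusion})^{\rm T}\Theta_s|^2\big]$; dissipativity and the smallness of $\eta$ again give a net $\le-c'|\Theta_s|^4$, and the rest is absorbed by Young's inequality using the just-proved (\ref{malgrad1}) together with (\ref{grad3}) for the fourth powers of $|\nabla_{v_i}\hat X_s^x|$; Gronwall then yields (\ref{malgrad2}). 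The heavier weight $t^{-3}$ and the $d^2$ arise because in the fourth-moment estimate the singular coefficient $t^{-1}$ enters squared while the time-integration Cauchy--Schwarz recovers only one of these powers.

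The main obstacle is exactly this singular-in-time coefficient $\tfrac1t\sigma(\hat X_s^x)^{-1}$ in the drift of (\ref{malsec}): handled crudely it makes the estimate blow up as $t\downarrow0$, and the content of the lemma is that combining it with the exponential contraction of the flow (the $-\theta_0$ from (\ref{Adiss})), the eighth-moment bound (\ref{grad3}), and the restriction $s\le t$ degrades the estimate only by the mild weights $t^{-1}$, $t^{-3}$ and the dimensional factors $d$, $d^2$ (through $\|\sigma^{-1}\|_{{\rm HS}}^2\le d/\delta^2$), while preserving the exponential rates $e^{-\theta_0 s/4}$ and $e^{-\theta_0 s/2}$. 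Making this bookkeeping precise is the only delicate step; the rest is the routine It\^{o}/dissipativity/Gronwall argument already used for (\ref{grad3})--(\ref{gradthird}).
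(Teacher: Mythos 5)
Your proof follows the paper's argument essentially verbatim: It\^{o}'s formula applied to $|D_{U_2}\nabla_{v_1}\hat X_s^x|^2$ (resp.\ $|\cdot|^4$), dissipativity from (\ref{Adiss}) with the It\^{o} corrections absorbed via the smallness of $\eta$, Young's inequality for the cross terms, the ellipticity bound $\|\sigma(\cdot)^{-1}\|_{\rm HS}^2\le d/\delta^2$, the flow bound (\ref{grad3}), and integration of the resulting differential inequality with zero initial condition (the paper likewise writes out only the second moment and declares the fourth moment ``the same''). The one quibble is your parenthetical accounting of the $t^{-3}$ in (\ref{malgrad2}): the singular factor $t^{-1}$ enters to the \emph{fourth} power through Young's inequality against $|\Theta_s|^3$, and the time integration over $[0,s]\subseteq[0,t]$ recovers exactly one power, giving $t^{-3}$ — not ``squared'' — but this slip is in the commentary, not in the argument, and does not affect its validity.
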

\begin{proof}
We only give the proof of (\ref{malgrad1}) and the (\ref{malgrad2}) can be proved in the same way.

Writing $\zeta(s)=D_{U_{2}}\nabla_{v_{1}}\hat{X}_{s}^{x}$, by It\^{o}'s formula, (\ref{diss}), Cauchy-Schwarz inequality, (\ref{Adiss}) and (\ref{diffusion1}), we have
\begin{align*}
\frac{\dif}{\dif r}\mathbb{E}|\zeta(r)|^{2}=&2\mathbb{E}[\langle\nabla B(\hat{X}_{r}^{x})\zeta(r)+\frac{r}{t}\nabla^{2}B(\hat{X}_{r}^x)\nabla_{v_{2}}\hat{X}_{r}^{x}\nabla_{v_{1}}\hat{X}_{r}^{x},\zeta(r)\rangle]\\
&+2\mathbb{E}[\langle\frac{1}{t}\nabla\sigma(\hat{X}_{r}^{x})\nabla_{v_{1}}\hat{X}_{r}^{x}\sigma(\hat{X}_{r}^{x})^{-1}\nabla_{v_{2}}\hat{X}_{r}^{x},\zeta(r)\rangle]\\
&+\eta\mathbb{E}[\|\nabla\sigma(\hat{X}_{r}^{x})\zeta(r)+\frac{r}{t}\nabla^{2}\sigma(\hat{X}_{r}^{x})\nabla_{v_{2}}\hat{X}_{r}^{x}\nabla_{v_{1}}\hat{X}_{r}^{x}\|_{\rm{HS}}^{2}]\\
\leq&-2\theta_{0}\mathbb{E}|\zeta(r)|^{2}+2\theta_{1}\frac{r}{t}\mathbb{E}[|\zeta(r)||\nabla_{v_{1}}\hat{X}_{r}^{x}||\nabla_{v_{2}}\hat{X}_{r}^{x}|]\\
&+2\theta_{3}\frac{1}{t}\mathbb{E}[|\zeta(r)||\nabla_{v_{1}}\hat{X}_{r}^{x}||\nabla_{v_{2}}\hat{X}_{r}^{x}|\|\sigma(\hat{X}_{r}^{x})^{-1}\|_{\rm{HS}}]\\
&+2\eta\mathbb{E}\big[\theta_{3}^{2}|\zeta(r)|^{2}+\theta_{4}^{2}\frac{r^{2}}{t^{2}}|\nabla_{v_{1}}\hat{X}_{r}^{x}|^{2}|\nabla_{v_{2}}\hat{X}_{r}^{x}|^{2}\big].
\end{align*}
By Young's inequality, we further have
\begin{align*}
\frac{\dif}{\dif r}\mathbb{E}|\zeta(r)|^{2}\leq&-2\theta_{0}\mathbb{E}|\zeta(r)|^{2}+2\theta_{1}\mathbb{E}[\frac{3{\theta_{0}}}{32\theta_{1}}|\zeta(r)|^{2}+\frac{8\theta_{1}}{3\theta_{0}}\frac{r^{2}}{t^{2}}|\nabla_{v_{1}}\hat{X}_{r}^{x}|^{2}|\nabla_{v_{2}}\hat{X}_{r}^{x}|^{2}]\\
&+2\theta_{3}\mathbb{E}[\frac{3\theta_{0}}{32\theta_{3}}|\zeta(r)|^{2}+\frac{8}{3\theta_{0}}\frac{1}{t^{2}}|\nabla_{v_{1}}\hat{X}_{r}^{x}|^{2}|\nabla_{v_{2}}\hat{X}_{r}^{x}|^{2}\|\sigma(\hat{X}_{r}^{x})^{-1}\|_{\rm{HS}}^{2}]\\
&+2\eta\mathbb{E}\big[\theta_{3}^{2}|\zeta(r)|^{2}+\theta_{4}^{2}\frac{r^{2}}{t^{2}}|\nabla_{v_{1}}\hat{X}_{r}^{x}|^{2}|\nabla_{v_{2}}\hat{X}_{r}^{x}|^{2}\big].
\end{align*}
Noticing that $\eta\leq\frac{11{\theta_{0}}}{16\theta_{3}^{2}}$, by (\ref{definite}), Cauchy's inequality and (\ref{grad3}), we can get
\begin{align*}
\frac{\dif}{\dif r}\mathbb{E}|\zeta(r)|^{2}\leq&-(\frac{13}{8}{\theta_{0}}-2\theta_{3}^{2}\eta)\mathbb{E}|\zeta(r)|^{2}+\big(\frac{16\theta_{1}^{2}}{3{\theta_{0}}}+\frac{16\theta_{3}}{3{\theta_{0}}}\frac{d}{\delta^{2}}\frac{1}{t^{2}}
+2\theta_{4}^{2}\eta\big)e^{-\frac{\theta_{0}}{2} t}|v_{1}|^{2}|v_{2}|^{2}\\
\leq&-\frac{{\theta_{0}}}{4}\mathbb{E}|\zeta(r)|^{2}+C_{\theta}(1+\frac{d}{\delta^{2}t^{2}})e^{-\frac{\theta_{0}}{2} t}|v_{1}|^{2}|v_{2}|^{2}.
\end{align*}
This inequality, together with $\zeta(0)=0,$ implies
\begin{align*}
\mathbb{E}|\zeta(s)|^{2}
\leq&C_{\theta}(1+\frac{d}{\delta^{2}t})e^{-\frac{{\theta_{0}}}{4}s}|v_{1}|^{2}|v_{2}|^{2}.
\end{align*}
\end{proof}

Based on the results above, we have the following two lemmas:

\begin{lemma}
Let unit vectors $v_{1},v_{2}\in\mathbb{R}^{d}$ and $x\in\mathbb{R}^{d}.$ Then, for all $\eta\leq\min\{1,\frac{{\theta_{0}}}{48\theta_{3}^{2}}\},$ we have
\begin{align}\label{1}
\mathbb{E}|\mathcal{I}_{v_{1}}^{x}(t)|^{4}\leq\frac{10}{t^{2}}\frac{d^{2}}{\delta^{4}\eta^{2}}|v_{1}|^{4},
\end{align}
\begin{align}\label{2}
\mathbb{E}|\nabla_{v_{2}}\mathcal{I}_{v_{1}}^{x}(t)|^{2}\leq C_{\theta}\frac{1}{t}\big(1+\frac{d^{2}}{\delta^{4}}\big)\eta^{-1}|v_{1}|^{2}|v_{2}|^{2},
\end{align}
\begin{align}\label{3}
\mathbb{E}|D_{U_{2}}\mathcal{I}_{v_{1}}^{x}(t)|^{2}\leq C_{\theta}\frac{1}{t\eta}\big(1+\frac{d^{2}}{\delta^{4}}+\frac{d^{2}}{\delta^{4}}\frac{1}{t\eta}\big)|v_{1}|^{2}|v_{2}|^{2}.
\end{align}
\end{lemma}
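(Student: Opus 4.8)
\emph{Plan.} All three bounds concern (stochastic) integral functionals of the flow derivatives $\nabla_{v}\hat X^{x}_{s}$, $\nabla_{v_{2}}\nabla_{v_{1}}\hat X^{x}_{s}$ and $D_{U_{2}}\nabla_{v_{1}}\hat X^{x}_{s}$, whose moments have already been controlled in \eqref{grad3}, \eqref{gradsec} and \eqref{malgrad1}. The strategy in each case is: (i) reduce the quantity to a time integral of such moments---via the Burkholder--Davis--Gundy (BDG) inequality for \eqref{1}, It\^{o}'s isometry for \eqref{2}, and the Malliavin chain rule \eqref{chain2} followed by It\^{o}'s isometry for \eqref{3}; (ii) insert \eqref{grad3}--\eqref{malgrad1} together with the pathwise bounds $\|\sigma(x)^{-1}\|_{\mathrm{HS}}\le\sqrt{d}/\delta$ (from \eqref{definite}) and $\|\nabla_{v}\sigma(x)\|_{\mathrm{HS}}\le\theta_{3}|v|$ (from \eqref{diffusion1}); and (iii) turn the resulting $t^{-k}\int_{0}^{t}e^{-cs}\dif s$ into the stated power of $t$, using that $t^{-k}\int_{0}^{t}e^{-cs}\dif s\le C_{c}\,t^{-(k-1)}$ for all $t>0$. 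It is precisely the prefactor $1/t$ in $\mathcal{I}^{x}_{v_{1}}$ (and the extra $1/t$ carried by $u_{2}$) that produces the powers $t^{-2}$, $t^{-1}$, $t^{-1}$ on the right-hand sides; the $\eta$-constraint is inherited only through \eqref{malgrad1}.

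For \eqref{1}, write $\mathcal{I}^{x}_{v_{1}}(t)=t^{-1}M_{t}$ with $M_{t}=\int_{0}^{t}\langle a(s),\dif B_{s}\rangle$ and $a(s)=\eta^{-1/2}\sigma(\hat X^{x}_{s})^{-1}\nabla_{v_{1}}\hat X^{x}_{s}$, so $\langle M\rangle_{t}=\int_{0}^{t}|a(s)|^{2}\dif s\le \eta^{-1}(d/\delta^{2})\int_{0}^{t}|\nabla_{v_{1}}\hat X^{x}_{s}|^{2}\dif s$ by \eqref{definite}. BDG for the fourth moment gives $\mathbb{E}|M_{t}|^{4}\le c\,\mathbb{E}\langle M\rangle_{t}^{2}$, and Cauchy--Schwarz in $\dif s$ gives $\langle M\rangle_{t}^{2}\le\eta^{-2}(d^{2}/\delta^{4})\,t\int_{0}^{t}|\nabla_{v_{1}}\hat X^{x}_{s}|^{4}\dif s$; taking expectations and using $\mathbb{E}|\nabla_{v_{1}}\hat X^{x}_{s}|^{4}\le e^{-\theta_{0}s/2}|v_{1}|^{4}$ (from \eqref{grad3} and Jensen) yields $\mathbb{E}\langle M\rangle_{t}^{2}\le C\eta^{-2}(d^{2}/\delta^{4})|v_{1}|^{4}\,t\min(t,\theta_{0}^{-1})$. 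Dividing by $t^{4}$ and using $t^{-3}\min(t,\theta_{0}^{-1})\le t^{-2}$ gives \eqref{1}.

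For \eqref{2}, differentiating the integrand in the initial condition gives $\nabla_{v_{2}}\mathcal{I}^{x}_{v_{1}}(t)=t^{-1}\int_{0}^{t}\langle\nabla_{v_{2}}a(s),\dif B_{s}\rangle$, where by $\nabla(\sigma^{-1})=-\sigma^{-1}(\nabla\sigma)\sigma^{-1}$ and the chain rule $\nabla_{v_{2}}a(s)=\eta^{-1/2}\big({-}\sigma^{-1}(\nabla_{v_{2}}\sigma)\sigma^{-1}\nabla_{v_{1}}\hat X^{x}_{s}+\sigma^{-1}\nabla_{v_{2}}\nabla_{v_{1}}\hat X^{x}_{s}\big)$ with $\|\nabla_{v_{2}}\sigma\|_{\mathrm{HS}}=\|\nabla_{\nabla_{v_{2}}\hat X^{x}_{s}}\sigma(\hat X^{x}_{s})\|_{\mathrm{HS}}\le\theta_{3}|\nabla_{v_{2}}\hat X^{x}_{s}|$ by \eqref{diffusion1}. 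It\^{o}'s isometry gives $\mathbb{E}|\nabla_{v_{2}}\mathcal{I}^{x}_{v_{1}}(t)|^{2}=t^{-2}\int_{0}^{t}\mathbb{E}|\nabla_{v_{2}}a(s)|^{2}\dif s$, and $\mathbb{E}|\nabla_{v_{2}}a(s)|^{2}\le C\eta^{-1}\big((d^{2}/\delta^{4})\,\mathbb{E}[|\nabla_{v_{1}}\hat X^{x}_{s}|^{2}|\nabla_{v_{2}}\hat X^{x}_{s}|^{2}]+(d/\delta^{2})\,\mathbb{E}|\nabla_{v_{2}}\nabla_{v_{1}}\hat X^{x}_{s}|^{2}\big)$; by \eqref{grad3} the first expectation is at most $e^{-\theta_{0}s/2}|v_{1}|^{2}|v_{2}|^{2}$ and by \eqref{gradsec} the second is at most $C_{\theta}e^{-\theta_{0}s/4}|v_{1}|^{2}|v_{2}|^{2}$, so the $\dif s$-integral against $t^{-2}$ collapses to $t^{-1}$, giving \eqref{2}.

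For \eqref{3}, the new ingredient is that $D_{U_{2}}$ of the stochastic integral $M_{t}$ splits, by \eqref{chain2}, into a Lebesgue part and a stochastic part: $D_{U_{2}}\mathcal{I}^{x}_{v_{1}}(t)=t^{-1}\big(\int_{0}^{t}\langle a(s),u_{2}(s)\rangle\dif s+\int_{0}^{t}\langle D_{U_{2}}a(s),\dif B_{s}\rangle\big)$, with $u_{2}(s)=t^{-1}\eta^{-1/2}\sigma(\hat X^{x}_{s})^{-1}\nabla_{v_{2}}\hat X^{x}_{s}$ and $D_{U_{2}}a(s)=\eta^{-1/2}\big({-}\sigma^{-1}(D_{U_{2}}\sigma)\sigma^{-1}\nabla_{v_{1}}\hat X^{x}_{s}+\sigma^{-1}D_{U_{2}}\nabla_{v_{1}}\hat X^{x}_{s}\big)$, where $D_{U_{2}}\sigma=(s/t)\,\nabla_{\nabla_{v_{2}}\hat X^{x}_{s}}\sigma(\hat X^{x}_{s})$ by \eqref{ma1}. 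The Lebesgue part carries the extra $1/t$ through $u_{2}$: $|\langle a(s),u_{2}(s)\rangle|\le t^{-1}\eta^{-1}(d/\delta^{2})|\nabla_{v_{1}}\hat X^{x}_{s}||\nabla_{v_{2}}\hat X^{x}_{s}|$, so Cauchy--Schwarz and \eqref{grad3} make it contribute $C\eta^{-2}(d^{2}/\delta^{4})t^{-2}|v_{1}|^{2}|v_{2}|^{2}$, which is the worst term in \eqref{3}. The stochastic part is treated exactly as in \eqref{2}, now with $\mathbb{E}|D_{U_{2}}\nabla_{v_{1}}\hat X^{x}_{s}|^{2}\le C_{\theta}(1+d/(\delta^{2}t))e^{-\theta_{0}s/4}|v_{1}|^{2}|v_{2}|^{2}$ from \eqref{malgrad1}: the ``$1$'' gives the $t^{-1}$ terms and the ``$d/(\delta^{2}t)$'' gives the remaining term after using $t^{-2}\min(t,C)\le t^{-1}$ and $\eta\le 1\le d$. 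I expect the only steps needing genuine care are forming $D_{U_{2}}$ of the stochastic integral correctly via \eqref{chain2} together with $\nabla(\sigma^{-1})=-\sigma^{-1}(\nabla\sigma)\sigma^{-1}$, and extracting the sharp powers of $t$ uniformly in $t>0$; everything else is bookkeeping of the constants $C_{\theta},d,\delta,\eta$.
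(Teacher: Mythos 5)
Your proposal is correct and follows essentially the same route as the paper: Burkholder/BDG plus Cauchy--Schwarz and the moment bound \eqref{grad3} for \eqref{1}, It\^{o} isometry with $\nabla(\sigma^{-1})=-\sigma^{-1}(\nabla\sigma)\sigma^{-1}$ and the bounds \eqref{grad3}, \eqref{gradsec} for \eqref{2}, and the decomposition \eqref{chain2} into a Lebesgue part (carrying the extra $(t\eta)^{-1}$) and a stochastic part controlled via \eqref{malgrad1} for \eqref{3}. The only cosmetic difference is that the paper tracks the Burkholder constant explicitly to land on the stated factor $10$ in \eqref{1}, whereas your argument yields a generic constant; the mechanism is identical.
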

\begin{proof}
By Burkholder's inequality \cite[Theorem 2]{Ren08}, (\ref{definite}) and (\ref{grad3}), we have
\begin{align*}
\mathbb{E}|\mathcal{I}_{v_{1}}^{x}(t)|^{4}=&\mathbb{E}|\frac{1}{t}\int_{0}^{t}\langle\eta^{-\frac{1}{2}}\sigma(\hat{X}_{s}^{x})^{-1}\nabla_{v_{1}}\hat{X}_{s}^{x},\dif B_{s}\rangle|^{4}\\
\leq&\frac{4\sqrt{2}}{t^{4}}\eta^{-2}\mathbb{E}\big(\int_{0}^{t}|\sigma(\hat{X}_{s}^{x})^{-1}\nabla_{v_{1}}\hat{X}_{s}^{x}|^{2}\dif s\big)^{2}\\
\leq&\frac{10}{t^{4}}\frac{d^{2}}{\delta^{4}\eta^{2}}\mathbb{E}\big(\int_{0}^{t}|\nabla_{v_{1}}\hat{X}_{s}^{x}|^{2}\dif s\big)^{2}\leq\frac{10}{t^{3}}\frac{d^{2}}{\delta^{4}\eta^{2}}\int_{0}^{t}\mathbb{E}|\nabla_{v_{1}}\hat{X}_{s}^{x}|^{4}\dif s
\leq\frac{10}{t^{2}}\frac{d^{2}}{\delta^{4}\eta^{2}}|v_{1}|^{4}.
\end{align*}

For \eqref{2}, the definition of $\mathcal{I}_{v_{1}}^{x}(t)$ yields
\begin{align*}
\nabla_{v_{2}}\mathcal{I}_{v_{1}}^{x}(t)=\frac{1}{t}\eta^{-\frac{1}{2}}\int_{0}^{t}\langle -\sigma(X_{s}^{x})^{-1}\nabla_{v_{2}}\sigma(X_{s}^{x})\sigma(X_{s}^{x})^{-1}\nabla_{v_{1}}X_{s}^{x}+\sigma(X_{s}^{x})^{-1}\nabla_{v_{2}}\nabla_{v_{1}}X_{s}^{x},\dif B_{s}\rangle.
\end{align*}
Then, by It\^{o} isometry, (\ref{diffusion1}) and (\ref{definite}), we have
\begin{align*}
&\mathbb{E}|\nabla_{v_{2}}\mathcal{I}_{v_{1}}^{x}(t)|^{2}\\
\leq&\frac{2}{t^{2}}\eta^{-1}\int_{0}^{t}\mathbb{E}[\theta_{3}^{2}\|\sigma(\hat{X}_{s}^{x})^{-1}\|_{{\rm HS}}^{4}|\nabla_{v_{1}}\hat{X}_{s}^{x}|^{2}|\nabla_{v_{1}}\hat{X}_{s}^{x}|^{2}
+\|\sigma(\hat{X}_{s}^{x})^{-1}\|_{{\rm HS}}^{2}|\nabla_{v_{2}}\nabla_{v_{1}}\hat{X}_{s}^{x}|^{2}]\dif s\\
\leq&\frac{2}{t^{2}}\eta^{-1}\int_{0}^{t}\mathbb{E}[\theta_{3}^{2}\frac{d^{2}}{\delta^{4}}|\nabla_{v_{1}}\hat{X}_{s}^{x}|^{2}|\nabla_{v_{1}}\hat{X}_{s}^{x}|^{2}
+\frac{d}{\delta^{2}}|\nabla_{v_{2}}\nabla_{v_{1}}\hat{X}_{s}^{x}|^{2}]\dif s.
\end{align*}
By Cauchy-Schwarz inequality, (\ref{grad3}) and (\ref{gradsec}), we further have
\begin{align*}
\mathbb{E}|\nabla_{v_{2}}\mathcal{I}_{v_{1}}^{x}(t)|^{2}\leq&
C_{\theta}\frac{1}{t^{2}}\eta^{-1}\int_{0}^{t}\big(1+\frac{d^{2}}{\delta^{4}}\big)\dif s|v_{1}|^{2}|v_{2}|^{2}\leq C_{\theta}\frac{1}{t}\big(1+\frac{d^{2}}{\delta^{4}}\big)\eta^{-1}|v_{1}|^{2}|v_{2}|^{2}.
\end{align*}

Recall (\ref{malliavin1}) and (\ref{ma1}), it is easy to see that $D_{U_{2}}\mathcal{I}_{v_{1}}^{x}(t)$ can be computed by (\ref{chain2}) as
\begin{align*}
&D_{U_{2}}\mathcal{I}_{v_{1}}^{x}(t)\\
=&\frac{\eta^{-1}}{t^{2}}\int_{0}^{t}\langle \sigma(\hat{X}_{s}^{x})^{-1}\nabla_{v_{1}}\hat{X}_{s}^{x},\sigma(\hat{X}_{s}^{x})^{-1}\nabla_{v_{2}}\hat{X}_{s}^{x}\rangle \dif s+\frac{\eta^{-\frac{1}{2}}}{t}\int_{0}^{t}\langle \sigma(\hat{X}_{s}^{x})^{-1}D_{U_{2}}\nabla_{v_{1}}\hat{X}_{s}^{x},\dif B_{s}\rangle\\
&-\frac{1}{t}\eta^{-\frac{1}{2}}\int_{0}^{t}\langle \sigma(\hat{X}_{s}^{x})^{-1}\nabla\sigma(\hat{X}_{s}^{x})\sigma(\hat{X}_{s}^{x})^{-1}\frac{s}{t}\nabla_{v_{2}}\hat{X}_{s}^{x}\nabla_{v_{1}}\hat{X}_{s}^{x},\dif B_{s}\rangle.
\end{align*}
Then, by the Cauchy-Schwarz inequality, It\^{o} isometry, (\ref{diffusion1}), (\ref{definite}), (\ref{grad3}) and (\ref{malgrad1}), we have
\begin{align*}
&\mathbb{E}|D_{U_{2}}\mathcal{I}_{v_{1}}^{x}(t)|^{2}\\
\leq&\frac{3}{t^{3}\eta^{2}}\int_{0}^{t}\mathbb{E}[\|\sigma(\hat{X}_{s}^{x})^{-1}\|_{{\rm HS}}^{4}|\nabla_{v_{1}}\hat{X}_{s}^{x}|^{2}|\nabla_{v_{2}}\hat{X}_{s}^{x}|^{2}] \dif s+\frac{3}{t^{2}\eta}\int_{0}^{t}\mathbb{E}[\|\sigma(\hat{X}_{s}^{x})^{-1}\|_{{\rm HS}}^{2}|D_{U_{2}}\nabla_{v_{1}}\hat{X}_{s}^{x}|^{2}]\dif s\\
&+\theta_{3}^{2}\frac{3}{t^{2}\eta}\int_{0}^{t}\mathbb{E}\big[\|\sigma(\hat{X}_{s}^{x})^{-1}\|_{{\rm HS}}^{4}|\nabla_{v_{2}}\hat{X}_{s}^{x}|^{2}|\nabla_{v_{1}}\hat{X}_{s}^{x}|^{2}\big]\dif s\\ \leq&\frac{3}{t^{2}\eta}(\frac{1}{t\eta}+\theta_{3}^{2})\int_{0}^{t}\frac{d^{2}}{\delta^{4}}\mathbb{E}[|\nabla_{v_{1}}\hat{X}_{s}^{x}|^{2}|\nabla_{v_{2}}\hat{X}_{s}^{x}|^{2}] \dif s+\frac{3}{t^{2}\eta}\int_{0}^{t}\frac{d}{\delta^{2}}\mathbb{E}|D_{U_{2}}\nabla_{v_{1}}\hat{X}_{s}^{x}|^{2}\dif s\\
\leq&C_{\theta}\frac{1}{t\eta}\big(1+\frac{d^{2}}{\delta^{4}}+\frac{d^{2}}{\delta^{4}}\frac{1}{t\eta}\big)|v_{1}|^{2}|v_{2}|^{2}.
\end{align*}
\end{proof}

Furthermore, let $v_{1},v_{2},v_{3}\in\mathbb{R}^{d},$ and define $u_{i}$ and $U_{i}$ as (\ref{malliavin1}) and (\ref{bismutf}), respectively, for $i=1,2,3.$ From (\ref{malsec}), we can similarly define $\nabla_{v_{3}}D_{U_{2}}\nabla_{v_{1}}X_{s}^{x},$ which satisfies the following equation: for $s\in[0,t],$
\begin{align*}
&\dif \nabla_{v_{3}}D_{U_{2}}\nabla_{v_{1}}\hat{X}_{s}^{x}\\
=&\big[\nabla^{2} B(\hat{X}_{s}^{x})\nabla_{v_{3}}\hat{X}_{s}^{x}D_{U_{2}}\nabla_{v_{1}}\hat{X}_{s}^{x}+\nabla B(\hat{X}_{s}^{x})\nabla_{v_{3}}D_{U_{2}}\nabla_{v_{1}}\hat{X}_{s}^{x}\\
&+\frac{s}{t}\nabla^{3}B(\hat{X}_{s}^{x})\nabla_{v_{3}}\hat{X}_{s}^{x}\nabla_{v_{2}}\hat{X}_{s}^{x}\nabla_{v_{1}}\hat{X}_{s}^{x}
+\frac{s}{t}\nabla^{2}B(\hat{X}_{s}^{x})\nabla_{v_{3}}\nabla_{v_{2}}\hat{X}_{s}^{x}\nabla_{v_{1}}\hat{X}_{s}^{x}\\
&+\frac{s}{t}\nabla^{2}B(\hat{X}_{s}^{x})\nabla_{v_{2}}\hat{X}_{s}^{x}\nabla_{v_{3}}\nabla_{v_{1}}\hat{X}_{s}^{x}
+\frac{1}{t}\nabla^{2}\sigma(\hat{X}_{s}^{x})\nabla_{v_{3}}\hat{X}_{s}^{x}\nabla_{v_{1}}\hat{X}_{s}^{x}\sigma(\hat{X}_{s}^{x})^{-1}\nabla_{v_{2}}\hat{X}_{s}^{x}\\
&+\frac{1}{t}\nabla\sigma(\hat{X}_{s}^{x})\big(\nabla_{v_{3}}\nabla_{v_{1}}\hat{X}_{s}^{x}\sigma(\hat{X}_{s}^{x})^{-1}\nabla_{v_{2}}\hat{X}_{s}^{x}
+\nabla_{v_{1}}\hat{X}_{s}^{x}\sigma(\hat{X}_{s}^{x})^{-1}\nabla_{v_{3}}\nabla_{v_{2}}\hat{X}_{s}^{x}\big)\\
&-\frac{1}{t}\nabla\sigma(\hat{X}_{s}^{x})\nabla_{v_{1}}\hat{X}_{s}^{x}\sigma(\hat{X}_{s}^{x})^{-1}\nabla\sigma(\hat{X}_{s}^{x})\nabla_{v_{3}}\hat{X}_{s}^{x}
\sigma(\hat{X}_{s}^{x})^{-1}\nabla_{v_{2}}\hat{X}_{s}^{x}\big]\dif s\\
&+\eta^{\frac{1}{2}}\big[\nabla^{2}\sigma(\hat{X}_{s}^{x})\nabla_{v_{3}}\hat{X}_{s}^{x}D_{U_{2}}\nabla_{v_{1}}\hat{X}_{s}^{x}
+\nabla\sigma(\hat{X}_{s}^{x})\nabla_{v_{3}}D_{U_{2}}\nabla_{v_{1}}\hat{X}_{s}^{x}\\
&\qquad\quad+\frac{s}{t}\nabla^{3}\sigma(\hat{X}_{s}^{x})\nabla_{v_{3}}\hat{X}_{s}^{x}\nabla_{v_{2}}\hat{X}_{s}^{x}\nabla_{v_{1}}\hat{X}_{s}^{x}
+\frac{s}{t}\nabla^{2}\sigma(\hat{X}_{s}^{x})\nabla_{v_{3}}\nabla_{v_{2}}\hat{X}_{s}^{x}\nabla_{v_{1}}\hat{X}_{s}^{x}\\
&\qquad\quad+\frac{s}{t}\nabla^{2}\sigma(\hat{X}_{s}^{x})\nabla_{v_{2}}\hat{X}_{s}^{x}\nabla_{v_{3}}\nabla_{v_{1}}\hat{X}_{s}^{x}\big]\dif B_{s}
\end{align*}
with $\nabla_{v_{3}}D_{U_{2}}\nabla_{v_{1}}\hat{X}_{0}^{x}=0$. Moreover, from (\ref{secequation}) and (\ref{malsec}), we can similarly define $D_{U_{3}}\nabla_{v_{2}}\nabla_{v_{1}}\hat{X}_{s}^{x}$ and $D_{U_{3}}D_{U_{2}}\nabla_{v_{1}}\hat{X}_{s}^{x}$, respectively.

Then, we have the following upper bounds on Malliavin derivatives.

\begin{lemma}
Let $v_{i}\in\mathbb{R}^{d}$ for $i=1,2,3,$ and let
\begin{align*}
U_{i,s}=\int_{0}^{s}u_{i}(r)\dif r, \quad 0\leq s\leq t,
\end{align*}
where $u_{i}(r)=\frac{1}{t}\eta^{-\frac{1}{2}}\sigma(\hat{X}_{r}^{x})^{-1}\nabla_{v_{i}}\hat{X}_{r}^{x}$ for $0\leq r\leq t.$ Then, for all $\eta\leq\min\{1,\frac{17{\theta_{0}}}{60\theta_{3}^{2}}\}$, we have
\begin{align}\label{malthird}
\mathbb{E}|\nabla_{v_{3}}D_{U_{2}}\nabla_{v_{1}}\hat{X}_{s}^{x}|^{2}\leq C_{\theta}\big(1+\frac{d^{2}}{\delta^{4}}\big)\big(1+\frac{1}{t}\big)e^{-\frac{\theta_{0}}{4}s}|v_{1}|^{2}|v_{2}|^{2}|v_{3}|^{2},
\end{align}
\begin{align}\label{malthird2}
\mathbb{E}|D_{U_{3}}\nabla_{v_{2}}\nabla_{v_{1}}\hat{X}_{s}^{x}|^{2}\leq C_{\theta}(1+\frac{d}{\delta^{2}t})e^{-\frac{\theta_{0}}{4}s}|v_{1}|^{2}|v_{2}|^{2}|v_{3}|^{2},
\end{align}
\begin{align}\label{malthird3}
\mathbb{E}|D_{U_{3}}D_{U_{2}}\nabla_{v_{1}}\hat{X}_{s}^{x}|^{2}\leq C_{\theta}\big(1+\frac{d^{2}}{\delta^{4}}\big)\big(1+\frac{1}{t^{\frac{5}{2}}}\big)e^{-\frac{\theta_{0}}{4}s}|v_{1}|^{2}|v_{2}|^{2}|v_{3}|^{2}.
\end{align}
\end{lemma}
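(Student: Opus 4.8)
The plan is to prove all three estimates by the same It\^o--Gr\"onwall scheme used for \eqref{malgrad1} and \eqref{malgrad2}. In each case one applies It\^o's formula to the squared Euclidean norm of the derivative process at hand: the drift generated by $\nabla B(\hat{X}_s^x)$ produces, via \eqref{Adiss}, a dissipative term $-2\theta_0\,\mathbb{E}|\cdot|^2$, while the noise coefficient $\nabla\sigma(\hat{X}_s^x)(\cdot)$ contributes, through the It\^o correction and the remaining cross term, at most a numerical multiple of $\theta_3^2\eta\,\mathbb{E}|\cdot|^2$. Every other summand of the SDE enters as a forcing term, never quadratically in the process itself, so Young's inequality lets us absorb a small multiple of $\mathbb{E}|\cdot|^2$ on the left; under the stated restriction $\eta\le\frac{17\theta_0}{60\theta_3^2}$ the net drift coefficient stays strictly below $-\frac{\theta_0}{4}$. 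It then remains to estimate the forcing by Cauchy--Schwarz together with \eqref{grad3}, \eqref{gradsec}, \eqref{gradthird}, \eqref{malgrad1}, \eqref{malgrad2} and the bound $\|\sigma(\hat{X}_s^x)^{-1}\|_{\rm HS}\le\sqrt d/\delta$ from \eqref{definite}, and finally to integrate the differential inequality $\frac{\dif}{\dif s}\mathbb{E}|\cdot|^2\le-\frac{\theta_0}{4}\mathbb{E}|\cdot|^2+(\text{forcing})(s)$ starting from the zero initial datum; since the dissipation rate exceeds the decay rate of the forcing, Gr\"onwall returns the rate $e^{-\frac{\theta_0}{4}s}$.

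For \eqref{malthird} take $\xi(s)=\nabla_{v_3}D_{U_2}\nabla_{v_1}\hat{X}_s^x$, which solves the linear SDE displayed immediately before the lemma. Its forcing is a finite sum of terms built from the bounded first, second and third derivatives of $B$ and $\sigma$, products of the quantities $\nabla_{v_i}\hat{X}_s^x$, $\nabla_{v_i}\nabla_{v_j}\hat{X}_s^x$, $\nabla_{v_i}\nabla_{v_j}\nabla_{v_k}\hat{X}_s^x$, $D_{U_2}\nabla_{v_1}\hat{X}_s^x$, powers of $\|\sigma(\hat{X}_s^x)^{-1}\|_{\rm HS}$, and explicit factors $1/t$ coming from the densities $u_i$. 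Applying Young and Cauchy--Schwarz to each term and invoking \eqref{grad3}--\eqref{gradthird}, \eqref{malgrad1}, one bounds the whole forcing by $C_\theta\big(1+\tfrac{d^2}{\delta^4}\big)\big(1+\tfrac1t\big)e^{-\frac{\theta_0}{4}s}|v_1|^2|v_2|^2|v_3|^2$; here $d^2/\delta^4$ is the largest power of $\|\sigma^{-1}\|_{\rm HS}$ that appears, and $1/t$ is the net power left after integrating the contributions that originate inside the stochastic integral. Gr\"onwall then gives \eqref{malthird}.

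The bounds \eqref{malthird2} and \eqref{malthird3} follow in exactly the same way from the SDEs satisfied by $D_{U_3}\nabla_{v_2}\nabla_{v_1}\hat{X}_s^x$ and $D_{U_3}D_{U_2}\nabla_{v_1}\hat{X}_s^x$, which are obtained by Malliavin-differentiating \eqref{secequation} and \eqref{malsec} respectively in the direction $U_3$, and which again have vanishing initial data. The only change is in the forcing: for $D_{U_3}\nabla_{v_2}\nabla_{v_1}\hat{X}_s^x$ the matrix $\sigma^{-1}$ enters only to the first power, so one gets the factor $1+d/(\delta^2 t)$, whereas for $D_{U_3}D_{U_2}\nabla_{v_1}\hat{X}_s^x$ the two Malliavin directions each contribute an extra $\sigma^{-1}$ and an extra $1/t$, yielding the factor $\big(1+\tfrac{d^2}{\delta^4}\big)\big(1+\tfrac1{t^{5/2}}\big)$; the dissipative mechanism, the role of the restriction on $\eta$, and the decay rate $e^{-\frac{\theta_0}{4}s}$ are unchanged.

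The main obstacle is bookkeeping rather than conceptual. One must enumerate all forcing terms in each of the three SDEs and check, term by term, that (i) Young's inequality pulls off only a controllably small multiple of the squared process, so dissipativity at a rate exceeding $\theta_0/4$ survives under $\eta\le\frac{17\theta_0}{60\theta_3^2}$; (ii) each term decays at least like $e^{-\frac{\theta_0}{4}s}$; and (iii) each term carries powers of $d/\delta^2$ and of $1/t$ no larger than those in the stated bounds. The last point is the delicate one, as it requires tracking which $1/t$-factors sit inside stochastic integrals --- and are therefore partly absorbed when one integrates over $s\in[0,t]$ --- and which sit in the drift and are not.
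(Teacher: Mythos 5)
Your proposal follows essentially the same route as the paper: Itô's formula applied to the squared norm of each derivative process, the dissipative term $-2\theta_0\mathbb{E}|\cdot|^2$ from \eqref{Adiss}, absorption of the $\theta_3^2\eta$-contributions and the Young-inequality remainders under the stated restriction on $\eta$, Cauchy--Schwarz estimates of the forcing via \eqref{grad3}--\eqref{gradthird}, \eqref{malgrad1}--\eqref{malgrad2} and $\|\sigma^{-1}\|_{\rm HS}\le\sqrt d/\delta$, and a final Grönwall integration from zero initial data (the paper writes this out only for \eqref{malthird} and leaves the other two to the same scheme, exactly as you do). You also correctly identify the one delicate bookkeeping point, namely that the $1/t^{2}$ in the instantaneous forcing is reduced to $1/t$ because the Grönwall integral runs over $[0,s]\subset[0,t]$.
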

\begin{proof}
Writing $\tau_{1}(s)=\nabla_{v_{3}}D_{U_{2}}\nabla_{v_{1}}\hat{X}_{s}^{x},$ by It\^{o}'s formula, we have
\begin{align*}
&\frac{\dif}{\dif r}\mathbb{E}|\tau_{1}(r)|^{2}\\
=&2\mathbb{E}\langle\nabla^{2} B(\hat{X}_{r}^{x})\nabla_{v_{3}}\hat{X}_{r}^{x}D_{U_{2}}\nabla_{v_{1}}\hat{X}_{r}^{x}+\nabla B(\hat{X}_{r}^{x})\tau_{1}(r)\\
&\qquad\quad+\frac{r}{t}\nabla^{3}B(\hat{X}_{r})\nabla_{v_{3}}\hat{X}_{r}^{x}\nabla_{v_{2}}\hat{X}_{r}^{x}\nabla_{v_{1}}\hat{X}_{r}^{x}
+\frac{r}{t}\nabla^{2}B(\hat{X}_{r})\nabla_{v_{3}}\nabla_{v_{2}}\hat{X}_{r}^{x}\nabla_{v_{1}}\hat{X}_{r}^{x}\\
&\qquad\quad+\frac{r}{t}\nabla^{2}B(\hat{X}_{r})\nabla_{v_{2}}\hat{X}_{r}^{x}\nabla_{v_{3}}\nabla_{v_{1}}\hat{X}_{r}^{x}
+\frac{1}{t}\nabla^{2}\sigma(\hat{X}_{r}^{x})\nabla_{v_{3}}\hat{X}_{r}^{x}\nabla_{v_{1}}\hat{X}_{r}^{x}\sigma(\hat{X}_{r}^{x})^{-1}\nabla_{v_{2}}\hat{X}_{r}^{x}\\
&\qquad\quad+\frac{1}{t}\nabla\sigma(\hat{X}_{r}^{x})\big(\nabla_{v_{3}}\nabla_{v_{1}}\hat{X}_{r}^{x}\sigma(\hat{X}_{r}^{x})^{-1}\nabla_{v_{2}}\hat{X}_{r}^{x}
+\nabla_{v_{1}}\hat{X}_{r}^{x}\sigma(\hat{X}_{r}^{x})^{-1}\nabla_{v_{3}}\nabla_{v_{2}}\hat{X}_{r}^{x}\big)\\
&\qquad\quad-\frac{1}{t}\nabla\sigma(\hat{X}_{r}^{x})\nabla_{v_{1}}\hat{X}_{r}^{x}\sigma(\hat{X}_{r}^{x})^{-1}\nabla\sigma(\hat{X}_{r}^{x})\nabla_{v_{3}}\hat{X}_{r}^{x}
\sigma(\hat{X}_{r}^{x})^{-1}\nabla_{v_{2}}\hat{X}_{r}^{x},\tau_{1}(r)\rangle\\
&+\eta\mathbb{E}\|\nabla^{2}\sigma(\hat{X}_{r}^{x})\nabla_{v_{3}}\hat{X}_{r}^{x}D_{U_{2}}\nabla_{v_{1}}\hat{X}_{r}^{x}
+\nabla\sigma(\hat{X}_{r}^{x})\tau_{1}(r)+\frac{r}{t}\nabla^{2}\sigma(\hat{X}_{r}^{x})\nabla_{v_{2}}\hat{X}_{r}^{x}\nabla_{v_{3}}\nabla_{v_{1}}\hat{X}_{r}^{x}\\
&\qquad\qquad+\frac{r}{t}\nabla^{3}\sigma(\hat{X}_{r}^{x})\nabla_{v_{3}}\hat{X}_{r}^{x}\nabla_{v_{2}}\hat{X}_{r}^{x}\nabla_{v_{1}}\hat{X}_{r}^{x}
+\frac{r}{t}\nabla^{2}\sigma(\hat{X}_{r}^{x})\nabla_{v_{3}}\nabla_{v_{2}}\hat{X}_{r}^{x}\nabla_{v_{1}}\hat{X}_{r}^{x}\|_{{\rm HS}}^{2}.
\end{align*}
It follows from {\bf Assumption A1} and the Cauchy-Schwarz inequality that
\begin{align*}
&\frac{\dif}{\dif r}\mathbb{E}|\tau_{1}(r)|^{2}\\
\leq&-2{\theta_{0}}\mathbb{E}|\tau_{1}(r)|^{2}+2\theta_{1}\mathbb{E}\big[|\nabla_{v_{3}}\hat{X}_{r}^{x}||D_{U_{2}}\nabla_{v_{1}}\hat{X}_{r}^{x}||\tau_{1}(r)|\big]\\
&+2\mathbb{E}\big[\theta_{2}|\nabla_{v_{3}}\hat{X}_{r}^{x}||\nabla_{v_{2}}\hat{X}_{r}^{x}||\nabla_{v_{1}}\hat{X}_{r}^{x}||\tau_{1}(r)|
+\theta_{1}|\nabla_{v_{3}}\nabla_{v_{2}}\hat{X}_{r}^{x}||\nabla_{v_{1}}\hat{X}_{r}^{x}||\tau_{1}(r)|\big]\\
&+2\mathbb{E}\big[\big(\theta_{1}|\nabla_{v_{3}}\nabla_{v_{1}}\hat{X}_{r}^{x}|
+\frac{\theta_{4}}{t}|\nabla_{v_{3}}\hat{X}_{r}^{x}||\nabla_{v_{1}}\hat{X}_{r}^{x}|\|\sigma(\hat{X}_{r}^{x})^{-1}\|_{{\rm HS}}\big)|\nabla_{v_{2}}\hat{X}_{r}^{x}||\tau_{1}(r)|\big]\\
&+2\theta_{3}\frac{1}{t}\mathbb{E}\big[\big(|\nabla_{v_{3}}\nabla_{v_{1}}\hat{X}_{r}^{x}||\nabla_{v_{2}}\hat{X}_{r}^{x}|
+|\nabla_{v_{1}}\hat{X}_{r}^{x}||\nabla_{v_{3}}\nabla_{v_{2}}\hat{X}_{r}^{x}|\big)\|\sigma(\hat{X}_{r}^{x})^{-1}\|_{{\rm HS}}|\tau_{1}(r)|\big]\\
&+2\theta_{3}^{2}\frac{1}{t}\mathbb{E}\big[|\nabla_{v_{1}}\hat{X}_{r}^{x}|\|\sigma(\hat{X}_{r}^{x})^{-1}\|_{{\rm HS}}^{2}|\nabla_{v_{3}}\hat{X}_{r}^{x}||\nabla_{v_{2}}\hat{X}_{r}^{x}||\tau_{1}(r)|\big]\\
&+5\theta_{4}^{2}\eta\mathbb{E}\big[|\nabla_{v_{3}}\hat{X}_{r}^{x}|^{2}|D_{U_{2}}\nabla_{v_{1}}\hat{X}_{r}^{x}|^{2}
+|\nabla_{v_{2}}\hat{X}_{r}^{x}|^{2}|\nabla_{v_{3}}\nabla_{v_{1}}\hat{X}_{r}^{x}|^{2}+|\nabla_{v_{3}}\nabla_{v_{2}}\hat{X}_{r}^{x}|^{2}|\nabla_{v_{1}}\hat{X}_{r}^{x}|^{2}\big]\\
&+5\eta\mathbb{E}\big[\theta_{3}^{2}|\tau_{1}(r)|^{2}+\theta_{5}^{2}|\nabla_{v_{3}}\hat{X}_{r}^{x}|^{2}|\nabla_{v_{2}}\hat{X}_{r}^{x}|^{2}|\nabla_{v_{1}}\hat{X}_{r}^{x}|^{2}\big].
\end{align*}
Moreover, by (\ref{definite}) and Young's inequality, we have
\begin{align*}
&\frac{\dif}{\dif r}\mathbb{E}|\tau_{1}(r)|^{2}\\
\leq&-(2{\theta_{0}}-5\theta_{3}^{2}\eta)\mathbb{E}|\tau_{1}(r)|^{2}+\mathbb{E}\big[\frac{{\theta_{0}}}{24}|\tau_{1}(r)|^{2}
+\frac{24\theta_{1}^{2}}{{\theta_{0}}}|\nabla_{v_{3}}\hat{X}_{r}^{x}|^{2}|D_{U_{2}}\nabla_{v_{1}}\hat{X}_{r}^{x}|^{2}\big]\\
&+\mathbb{E}\big[\frac{{\theta_{0}}}{12}|\tau_{1}(r)|^{2}+\big(\frac{24\theta_{2}^{2}}{{\theta_{0}}}|\nabla_{v_{3}}\hat{X}_{r}^{x}|^{2}|\nabla_{v_{2}}\hat{X}_{r}^{x}|^{2}
+\frac{24\theta_{1}^{2}}{{\theta_{0}}}|\nabla_{v_{3}}\nabla_{v_{2}}\hat{X}_{r}^{x}|^{2}\big)|\nabla_{v_{1}}\hat{X}_{r}^{x}|^{2}\big]\\
&+\mathbb{E}\big[\frac{{\theta_{0}}}{12}|\tau_{1}(r)|^{2}+\big(\frac{24\theta_{1}^{2}}{{\theta_{0}}}|\nabla_{v_{3}}\nabla_{v_{1}}\hat{X}_{r}^{x}|^{2}
+\frac{24\theta_{4}^{2}}{{\theta_{0}}t^{2}}|\nabla_{v_{3}}\hat{X}_{r}^{x}|^{2}|\nabla_{v_{1}}\hat{X}_{r}^{x}|^{2}\frac{d}{\delta^{2}}\big)|\nabla_{v_{2}}\hat{X}_{r}^{x}|^{2}\big]\\
&+\mathbb{E}\big[\frac{{\theta_{0}}}{12}|\tau_{1}(r)|^{2}+\frac{24\theta_{3}^{2}}{{\theta_{0}}t^{2}}\big(|\nabla_{v_{3}}\nabla_{v_{1}}\hat{X}_{r}^{x}|^{2}|\nabla_{v_{2}}\hat{X}_{r}^{x}|^{2}
+|\nabla_{v_{1}}\hat{X}_{r}^{x}|^{2}|\nabla_{v_{3}}\nabla_{v_{2}}\hat{X}_{r}^{x}|^{2}\big)\frac{d}{\delta^{2}}\big]\\
&+\mathbb{E}\big[\frac{{\theta_{0}}}{24}|\tau_{1}(r)|^{2}+\frac{24\theta_{3}^{4}}{{\theta_{0}}t^{2}}|\nabla_{v_{1}}\hat{X}_{r}^{x}|^{2}\frac{d^{2}}{\delta^{4}}|\nabla_{v_{3}}\hat{X}_{r}^{x}|^{2}|\nabla_{v_{2}}\hat{X}_{r}^{x}|^{2}\big]\\
&+5\theta_{4}^{2}\eta\mathbb{E}\big[|\nabla_{v_{3}}\hat{X}_{r}^{x}|^{2}|D_{U_{2}}\nabla_{v_{1}}\hat{X}_{r}^{x}|^{2}
+|\nabla_{v_{2}}\hat{X}_{r}^{x}|^{2}|\nabla_{v_{3}}\nabla_{v_{1}}\hat{X}_{r}^{x}|^{2}+|\nabla_{v_{3}}\nabla_{v_{2}}\hat{X}_{r}^{x}|^{2}|\nabla_{v_{1}}\hat{X}_{r}^{x}|^{2}\big]\\
&+5\theta_{5}^{2}\eta\mathbb{E}\big[|\nabla_{v_{3}}\hat{X}_{r}^{x}|^{2}|\nabla_{v_{2}}\hat{X}_{r}^{x}|^{2}|\nabla_{v_{1}}\hat{X}_{r}^{x}|^{2}\big],
\end{align*}
By the fact $\eta\leq\min\{1,\frac{17{\theta_{0}}}{60\theta_{3}^{2}}\}$, the Cauchy-Schwarz inequality, (\ref{grad3}), (\ref{malgrad2}) and (\ref{gradsec}), we further have
\begin{align*}
\frac{\dif}{\dif r}\mathbb{E}|\tau_{1}(r)|^{2}\leq&-\frac{{\theta_{0}}}{4}\mathbb{E}|\tau_{1}(r)|^{2}+C_{\theta}\big(1+\frac{d^{2}}{\delta^{4}}\big)\big(1+\frac{1}{t^{2}}\big)e^{-\frac{\theta_{0}}{2}r}|v_{1}|^{2}|v_{2}|^{2}|v_{3}|^{2}.
\end{align*}
This inequality, together with $\tau_{1}(0)=0,$ implies
\begin{align*}
\mathbb{E}|\tau_{1}(s)|^{2}\leq&C_{\theta}\big(1+\frac{d^{2}}{\delta^{4}}\big)\big(1+\frac{1}{t^{2}}\big)|v_{1}|^{2}|v_{2}|^{2}|v_{3}|^{2}\int_{0}^{s}e^{-\frac{\theta_{0}}{2}r}e^{-\frac{\theta_{0}(s-r)}{4}}\dif r\\
\leq&C_{\theta}\big(1+\frac{d^{2}}{\delta^{4}}\big)\big(1+\frac{1}{t}\big)e^{-\frac{\theta_{0}}{4}s}|v_{1}|^{2}|v_{2}|^{2}|v_{3}|^{2}.
\end{align*}
\end{proof}

With the above results, we have the following estimates:

\begin{lemma}
Let $v_{1},v_{2},v_{3}\in\mathbb{R}^{d}$ and $x\in\mathbb{R}^{d}.$ Then, as $\eta\leq\min\{1,\frac{{\theta_{0}}}{48\theta_{3}^{2}}\}$, we have
\begin{align}\label{third1}
\mathbb{E}|\mathcal{R}_{v_{1},v_{2}}^{x}(t)|^{2}\leq C_{\theta}(1+\frac{d}{\delta^{2}t})e^{-\frac{{\theta_{0}}}{4}t}|v_{1}|^{2}|v_{2}|^{2},
\end{align}
\begin{align}\label{third2}
\mathbb{E}|\nabla_{v_{3}}\mathcal{R}_{v_{1},v_{2}}^{x}(t)|^{2}
\leq C_{\theta}\big(1+\frac{d^{2}}{\delta^{4}}\big)\big(1+\frac{1}{t}\big)e^{-\frac{\theta_{0}}{4}t}|v_{1}|^{2}|v_{2}|^{2}|v_{3}|^{2}
\end{align}
and
\begin{align}\label{third3}
\mathbb{E}|D_{U_{3}}\mathcal{R}_{v_{1},v_{2}}^{x}(t)|^{2}
\leq C_{\theta}\big(1+\frac{d^{2}}{\delta^{4}}\big)\big(1+\frac{1}{t^{\frac{5}{2}}}\big)e^{-\frac{\theta_{0}}{4}t}|v_{1}|^{2}|v_{2}|^{2}|v_{3}|^{2}.
\end{align}
\end{lemma}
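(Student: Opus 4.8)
\textit{Proof strategy.} The plan is to deduce all three bounds purely algebraically from the definition of the remainder term, using only the moment estimates already established for the first, second and mixed derivatives of the flow $\hat X^x_\cdot$ and of the associated Malliavin derivatives. Recall that $\mathcal{R}_{v_{1},v_{2}}^{x}(t)=\nabla_{v_{2}}\nabla_{v_{1}}\hat{X}_{t}^{x}-D_{U_{2}}\nabla_{v_{1}}\hat{X}_{t}^{x}$, and that both $v\mapsto\nabla_{v}$ and $U\mapsto D_{U}$ act linearly; hence $\nabla_{v_{3}}\mathcal{R}_{v_{1},v_{2}}^{x}(t)=\nabla_{v_{3}}\nabla_{v_{2}}\nabla_{v_{1}}\hat{X}_{t}^{x}-\nabla_{v_{3}}D_{U_{2}}\nabla_{v_{1}}\hat{X}_{t}^{x}$ and $D_{U_{3}}\mathcal{R}_{v_{1},v_{2}}^{x}(t)=D_{U_{3}}\nabla_{v_{2}}\nabla_{v_{1}}\hat{X}_{t}^{x}-D_{U_{3}}D_{U_{2}}\nabla_{v_{1}}\hat{X}_{t}^{x}$. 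For each of these identities I would apply the elementary bound $|a-b|^{2}\le 2|a|^{2}+2|b|^{2}$ to the two terms on the right, take expectations, and insert the respective estimates, all valid under $\eta\le\min\{1,\theta_{0}/(48\theta_{3}^{2})\}$ (the intersection of the smallness conditions in the lemmas being invoked).

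\textit{Key steps.} For \eqref{third1} one has $\mathbb{E}|\mathcal{R}_{v_{1},v_{2}}^{x}(t)|^{2}\le 2\mathbb{E}|\nabla_{v_{2}}\nabla_{v_{1}}\hat{X}_{t}^{x}|^{2}+2\mathbb{E}|D_{U_{2}}\nabla_{v_{1}}\hat{X}_{t}^{x}|^{2}$; the first term is handled by Cauchy--Schwarz, $\mathbb{E}|\nabla_{v_{2}}\nabla_{v_{1}}\hat{X}_{t}^{x}|^{2}\le(\mathbb{E}|\nabla_{v_{2}}\nabla_{v_{1}}\hat{X}_{t}^{x}|^{4})^{1/2}\le \sqrt{C_{\theta}}\,e^{-\theta_{0}t/4}|v_{1}|^{2}|v_{2}|^{2}$ by \eqref{gradsec}, and the second term is exactly \eqref{malgrad1}, contributing the prefactor $1+d/(\delta^{2}t)$; summing gives \eqref{third1}. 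For \eqref{third2} I would combine \eqref{gradthird} for $\mathbb{E}|\nabla_{v_{3}}\nabla_{v_{2}}\nabla_{v_{1}}\hat{X}_{t}^{x}|^{2}$ with \eqref{malthird} for $\mathbb{E}|\nabla_{v_{3}}D_{U_{2}}\nabla_{v_{1}}\hat{X}_{t}^{x}|^{2}$; since the former bound is dominated (up to $C_{\theta}$) by the latter, the sum is $\le C_{\theta}(1+d^{2}/\delta^{4})(1+1/t)e^{-\theta_{0}t/4}|v_{1}|^{2}|v_{2}|^{2}|v_{3}|^{2}$, which is \eqref{third2}. Likewise, for \eqref{third3} I would use \eqref{malthird2} for $\mathbb{E}|D_{U_{3}}\nabla_{v_{2}}\nabla_{v_{1}}\hat{X}_{t}^{x}|^{2}$ and \eqref{malthird3} for $\mathbb{E}|D_{U_{3}}D_{U_{2}}\nabla_{v_{1}}\hat{X}_{t}^{x}|^{2}$, and observe that $1+d/(\delta^{2}t)$ is absorbed by $(1+d^{2}/\delta^{4})(1+1/t^{5/2})$ (using $x\le 1+x^{2}$ together with $t^{-1}\le t^{-5/2}$ for $t\le 1$ and $t^{-1}\le 1$ for $t\ge 1$), so that the sum yields \eqref{third3}.

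\textit{Main obstacle.} There is no real obstacle at this stage: the genuinely analytic work --- the It\^o/Gr\"onwall arguments for $\nabla_{v_{2}}\nabla_{v_{1}}\hat X^x$, $\nabla_{v_{3}}\nabla_{v_{2}}\nabla_{v_{1}}\hat X^x$ and for the mixed Malliavin derivatives $D_{U_{2}}\nabla_{v_{1}}\hat X^x$, $\nabla_{v_{3}}D_{U_{2}}\nabla_{v_{1}}\hat X^x$, $D_{U_{3}}\nabla_{v_{2}}\nabla_{v_{1}}\hat X^x$, $D_{U_{3}}D_{U_{2}}\nabla_{v_{1}}\hat X^x$, all relying on the dissipativity \eqref{Adiss} and the ellipticity \eqref{definite} --- has already been carried out in the preceding lemmas, and the present statement is only the triangle inequality applied to the decomposition of $\mathcal{R}_{v_{1},v_{2}}^{x}$ into a difference. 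The single point requiring attention is the bookkeeping of the dimension- and ellipticity-dependent prefactors, namely verifying that in each pair the weaker prefactor is dominated by the stronger one so that the stated form is recovered; this is routine.
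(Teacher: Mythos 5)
Your proposal is correct and is essentially identical to the paper's proof: the paper also splits $\mathcal{R}_{v_{1},v_{2}}^{x}(t)$, $\nabla_{v_{3}}\mathcal{R}_{v_{1},v_{2}}^{x}(t)$ and $D_{U_{3}}\mathcal{R}_{v_{1},v_{2}}^{x}(t)$ into their two constituent terms via $|a-b|^{2}\le 2|a|^{2}+2|b|^{2}$, invokes \eqref{gradsec} (through Cauchy--Schwarz to pass from the fourth to the second moment), \eqref{malgrad1}, \eqref{gradthird}, \eqref{malthird}, \eqref{malthird2} and \eqref{malthird3} exactly as you do, and absorbs the weaker prefactor into the stronger one. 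No gaps.
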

\begin{proof}
By the Cauchy-Schwarz inequality, (\ref{gradsec}) and (\ref{malgrad1}), we have
\begin{align*}
\mathbb{E}|\mathcal{R}_{v_{1},v_{2}}^{x}(t)|^{2}\leq&2\mathbb{E}|\nabla_{v_{2}}\nabla_{v_{1}}\hat{X}_{t}^{x}|^{2}+2\mathbb{E}|D_{U_{2}}\nabla_{v_{1}}\hat{X}_{t}^{x}|^{2}
\leq C_{\theta}(1+\frac{d}{\delta^{2}t})e^{-\frac{{\theta_{0}}}{4}t}|v_{1}|^{2}|v_{2}|^{2}.
\end{align*}

By (\ref{gradthird}) and (\ref{malthird}), we have
\begin{align*}
\mathbb{E}|\nabla_{v_{3}}\mathcal{R}_{v_{1},v_{2}}^{x}(t)|^{2}\leq&2\mathbb{E}|\nabla_{v_{3}}\nabla_{v_{2}}\nabla_{v_{1}}\hat{X}_{t}^{x}|^{2}+2\mathbb{E}|\nabla_{v_{3}}D_{U_{2}}\nabla_{v_{1}}\hat{X}_{t}^{x}|^{2}\\
\leq&C_{\theta}\big(1+\frac{d^{2}}{\delta^{4}}\big)\big(1+\frac{1}{t}\big)e^{-\frac{\theta_{0}}{4}t}|v_{1}|^{2}|v_{2}|^{2}|v_{3}|^{2}.
\end{align*}

By (\ref{malthird2}) and (\ref{malthird3}), we have
\begin{align*}
\mathbb{E}|D_{U_{3}}\mathcal{R}_{v_{1},v_{2}}^{x}(t)|^{2}\leq&2\mathbb{E}|D_{U_{3}}\nabla_{v_{2}}\nabla_{v_{1}}\hat{X}_{t}^{x}|^{2}+2\mathbb{E}|D_{U_{3}}D_{U_{2}}\nabla_{v_{1}}\hat{X}_{t}^{x}|^{2}\\
\leq&C_{\theta}\big(1+\frac{d^{2}}{\delta^{4}}\big)\big(1+\frac{1}{t^{\frac{5}{2}}}\big)e^{-\frac{\theta_{0}}{4}t}|v_{1}|^{2}|v_{2}|^{2}|v_{3}|^{2}.
\end{align*}
\end{proof}

\subsection{Proof of Lemma \ref{mainlem1}}

Recall $P_{t}h(x)=\mathbb{E}[h(\hat{X}_{t}^{x})]$ for $h\in{\rm Lip}(1),$ by Lebesgue's dominated convergence theorem, the Cauchy-Schwarz inequality and (\ref{grad3}), we have
\begin{align*}
|\nabla_{v}\mathbb{E}[h(\hat{X}_{t}^{x})]|=|\mathbb{E}[\nabla h(\hat{X}_{t}^{x})\nabla_{v}\hat{X}_{t}^{x}]|
\leq\|\nabla h\|\mathbb{E}|\nabla_{v}\hat{X}_{t}^{x}|\leq e^{-\frac{\theta_{0}}{8} t}|v|,
\end{align*}
(\ref{gradient2}) is proved.

Denote
\begin{align}\label{ma2}
h_{\epsilon}(x)=\int_{\mathbb{R}^{d}}f_{\epsilon}(y)h(x-y)\dif y,
\end{align}
with $\epsilon>0$ and $f_{\epsilon}$ is the density of the normal distribution $N(0,\epsilon^{2}I_{d}).$ It is easy to see that $h_{\epsilon}$ is smooth, $\lim_{\epsilon\rightarrow0}h_{\epsilon}(x)=h(x),$ $\lim_{\epsilon\rightarrow0}\nabla h_{\epsilon}(x)=\nabla h(x)$ and $|h_{\epsilon}(x)|\leq C(1+|x|)$ for all $x\in\mathbb{R}^{d}$ and some $C>0.$ Moreover, $\|\nabla h_{\epsilon}\|\leq\|\nabla h\|\leq1.$ Then, by Lebesgue's dominated convergence theorem, we have
\begin{align*}
\nabla_{v_{2}}\nabla_{v_{1}}\mathbb{E}\big[h_{\epsilon}(\hat{X}_{t}^{x})\big]=\mathbb{E}\big[\nabla^{2}h_{\epsilon}(\hat{X}_{t}^{x})\nabla_{v_{2}}\hat{X}_{t}^{x}
\nabla_{v_{1}}\hat{X}_{t}^{x}\big]+\mathbb{E}\big[\nabla h_{\epsilon}(\hat{X}_{t}^{x})\nabla_{v_{2}}\nabla_{v_{1}}\hat{X}_{t}^{x}\big],
\end{align*}
by (\ref{malliavin1}) and (\ref{malliavin2}), we further have
\begin{align*}
\mathbb{E}\big[\nabla^{2}h_{\epsilon}(\hat{X}_{t}^{x})\nabla_{v_{2}}\hat{X}_{t}^{x}
\nabla_{v_{1}}\hat{X}_{t}^{x}\big]=&\mathbb{E}\big[\nabla^{2}h_{\epsilon}(\hat{X}_{t}^{x})D_{U_{2}}\hat{X}_{t}^{x}
\nabla_{v_{1}}\hat{X}_{t}^{x}\big]\\
=&\mathbb{E}\big[D_{U_{2}}\big(\nabla h_{\epsilon}(\hat{X}_{t}^{x})\big)\nabla_{v_{1}}\hat{X}_{t}^{x}\big]\\
=&\mathbb{E}\big[D_{U_{2}}\big(\nabla h_{\epsilon}(\hat{X}_{t}^{x})\nabla_{v_{1}}\hat{X}_{t}^{x}\big)\big]-\mathbb{E}\big[\nabla h_{\epsilon}(\hat{X}_{t}^{x})D_{U_{2}}\nabla_{v_{1}}\hat{X}_{t}^{x}\big]\\
=&\mathbb{E}\big[\nabla h_{\epsilon}(\hat{X}_{t}^{x})\nabla_{v_{1}}\hat{X}_{t}^{x}\mathcal{I}_{v_{2}}^{x}(t)\big]-\mathbb{E}\big[\nabla h_{\epsilon}(\hat{X}_{t}^{x})D_{U_{2}}\nabla_{v_{1}}\hat{X}_{t}^{x}\big],
\end{align*}
where the last equality is by Bismut's formula (\ref{bismut}). These imply
\begin{align}\label{seccrucial}
\nabla_{v_{2}}\nabla_{v_{1}}\mathbb{E}\big[h_{\epsilon}(\hat{X}_{t}^{x})\big]=\mathbb{E}\big[\nabla h_{\epsilon}(\hat{X}_{t}^{x})\nabla_{v_{1}}\hat{X}_{t}^{x}\mathcal{I}_{v_{2}}^{x}(t)\big]+\mathbb{E}\big[\nabla h_{\epsilon}(\hat{X}_{t}^{x})\mathcal{R}_{v_{1},v_{2}}^{x}(t)\big].
\end{align}
Therefore, by Lebesgue's dominated convergence theorem, the Cauchy-Schwarz inequality, (\ref{grad3}), (\ref{1}) and (\ref{third1}), we have
\begin{align*}
|\nabla_{v_{2}}\nabla_{v_{1}}\mathbb{E}[h(\hat{X}_{t}^{x})]|=|\lim_{\epsilon\rightarrow0}\nabla_{v_{2}}\nabla_{v_{1}}\mathbb{E}[h_{\epsilon}(\hat{X}_{t}^{x})]|\leq&\mathbb{E}|\nabla_{v_{1}}\hat{X}_{t}^{x}\mathcal{I}_{v_{2}}^{x}(t)|+\mathbb{E}|\mathcal{R}_{v_{1},v_{2}}^{x}(t)|\\
\leq&C_{\theta}(1+\frac{\sqrt{d}}{\delta})(1+\frac{1}{\sqrt{\eta t}})e^{-\frac{{\theta_{0}}}{8}t}|v_{1}||v_{2}|,
\end{align*}
(\ref{sec}) is proved.

By (\ref{seccrucial}) and Lebesgue's dominated convergence theorem, we have
\begin{align*}
\nabla_{v_{3}}\nabla_{v_{2}}\nabla_{v_{1}}\mathbb{E}\big[h_{\epsilon}(\hat{X}_{t}^{x})\big]=&\nabla_{v_{3}}\mathbb{E}\big[\nabla h_{\epsilon}(\hat{X}_{t}^{x})\nabla_{v_{1}}\hat{X}_{t}^{x}\mathcal{I}_{v_{2}}^{x}(t)\big]+\nabla_{v_{3}}\mathbb{E}\big[{\nabla h_{\epsilon}(\hat{X}_{t}^{x})}\mathcal{R}_{v_{1},v_{2}}^{x}(t)\big]\\
=&\mathbb{E}\big[\nabla^{2} h_{\epsilon}(\hat{X}_{t}^{x})\nabla_{v_{3}}\hat{X}_{t}^{x}\nabla_{v_{1}}\hat{X}_{t}^{x}\mathcal{I}_{v_{2}}^{x}(t)\big]+\mathbb{E}\big[\nabla h_{\epsilon}(\hat{X}_{t}^{x})\nabla_{v_{3}}\nabla_{v_{1}}\hat{X}_{t}^{x}\mathcal{I}_{v_{2}}^{x}(t)\big]\\
&+\mathbb{E}\big[\nabla h_{\epsilon}(\hat{X}_{t}^{x})\nabla_{v_{1}}\hat{X}_{t}^{x}\nabla_{v_{3}}\mathcal{I}_{v_{2}}^{x}(t)\big]+\mathbb{E}\big[\nabla^{2} h_{\epsilon}(\hat{X}_{t}^{x})\nabla_{v_{3}}\hat{X}_{t}^{x}\mathcal{R}_{v_{1},v_{2}}^{x}(t)\big]\\
&+\mathbb{E}\big[\nabla h_{\epsilon}(\hat{X}_{t}^{x})\nabla_{v_{3}}\mathcal{R}_{v_{1},v_{2}}^{x}(t)\big],
\end{align*}
by (\ref{malliavin1}), (\ref{malliavin2}) and (\ref{bismut}), we further have
\begin{align*}
&\mathbb{E}\big[\nabla^{2} h_{\epsilon}(\hat{X}_{t}^{x})\nabla_{v_{3}}\hat{X}_{t}^{x}\nabla_{v_{1}}\hat{X}_{t}^{x}\mathcal{I}_{v_{2}}^{x}(t)\big]=\mathbb{E}\big[\nabla^{2} h_{\epsilon}(\hat{X}_{t}^{x})D_{U_{3}}\hat{X}_{t}^{x}\nabla_{v_{1}}\hat{X}_{t}^{x}\mathcal{I}_{v_{2}}^{x}(t)\big]\\
=&\mathbb{E}\big[D_{U_{3}}\big(\nabla h_{\epsilon}(\hat{X}_{t}^{x})\big)\nabla_{v_{1}}\hat{X}_{t}^{x}\mathcal{I}_{v_{2}}^{x}(t)\big]\\
=&\mathbb{E}\big[D_{U_{3}}\big(\nabla h_{\epsilon}(\hat{X}_{t}^{x})\nabla_{v_{1}}\hat{X}_{t}^{x}\mathcal{I}_{v_{2}}^{x}(t)\big)\big]-\mathbb{E}\big[\nabla h_{\epsilon}(\hat{X}_{t}^{x})D_{U_{3}}\nabla_{v_{1}}\hat{X}_{t}^{x}\mathcal{I}_{v_{2}}^{x}(t)\big]\\
&-\mathbb{E}\big[\nabla h_{\epsilon}(\hat{X}_{t}^{x})\nabla_{v_{1}}\hat{X}_{t}^{x}D_{U_{3}}\mathcal{I}_{v_{2}}^{x}(t)\big]\\
=&\mathbb{E}\big[\nabla h_{\epsilon}(\hat{X}_{t}^{x})\nabla_{v_{1}}\hat{X}_{t}^{x}\mathcal{I}_{v_{2}}^{x}(t)\mathcal{I}_{v_{3}}^{x}(t)\big]-\mathbb{E}\big[\nabla h_{\epsilon}(\hat{X}_{t}^{x})D_{U_{3}}\nabla_{v_{1}}\hat{X}_{t}^{x}\mathcal{I}_{v_{2}}^{x}(t)\big]\\
&-\mathbb{E}\big[\nabla h_{\epsilon}(\hat{X}_{t}^{x})\nabla_{v_{1}}\hat{X}_{t}^{x}D_{U_{3}}\mathcal{I}_{v_{2}}^{x}(t)\big],
\end{align*}
and similarly
\begin{align*}
\mathbb{E}\big[\nabla^{2} h_{\epsilon}(\hat{X}_{t}^{x})\nabla_{v_{3}}\hat{X}_{t}^{x}\mathcal{R}_{v_{1},v_{2}}^{x}(t)\big]
=&\mathbb{E}\big[\nabla h_{\epsilon}(\hat{X}_{t}^{x})\mathcal{R}_{v_{1},v_{2}}^{x}(t)\mathcal{I}_{v_{3}}^{x}(t)\big]-\mathbb{E}\big[\nabla h_{\epsilon}(\hat{X}_{t}^{x})D_{U_{3}}\mathcal{R}_{v_{1},v_{2}}^{x}(t)\big].
\end{align*}
Hence, by Lebesgue's dominated convergence theorem, we have
\begin{align*}
|\nabla_{v_{3}}\nabla_{v_{2}}\nabla_{v_{1}}\mathbb{E}[h(\hat{X}_{t}^{x})]|=&|\lim_{\epsilon\rightarrow0}\nabla_{v_{3}}\nabla_{v_{2}}\nabla_{v_{1}}\mathbb{E}[h_{\epsilon}(\hat{X}_{t}^{x})]|\\
\leq&\mathbb{E}|\nabla_{v_{1}}\hat{X}_{t}^{x}\mathcal{I}_{v_{2}}^{x}(t)\mathcal{I}_{v_{3}}^{x}(t)|+\mathbb{E}|D_{U_{3}}\nabla_{v_{1}}\hat{X}_{t}^{x}\mathcal{I}_{v_{2}}^{x}(t)|\\
&+\mathbb{E}|\nabla_{v_{1}}\hat{X}_{t}^{x}D_{U_{3}}\mathcal{I}_{v_{2}}^{x}(t)|+\mathbb{E}|\nabla_{v_{3}}\nabla_{v_{1}}\hat{X}_{t}^{x}\mathcal{I}_{v_{2}}^{x}(t)|\\
&+\mathbb{E}|\nabla_{v_{1}}\hat{X}_{t}^{x}\nabla_{v_{3}}\mathcal{I}_{v_{2}}^{x}(t)|+\mathbb{E}|\mathcal{R}_{v_{1},v_{2}}^{x}(t)\mathcal{I}_{v_{3}}^{x}(t)|\\
&+\mathbb{E}|D_{U_{3}}\mathcal{R}_{v_{1},v_{2}}^{x}(t)|+\mathbb{E}|\nabla_{v_{3}}\mathcal{R}_{v_{1},v_{2}}^{x}(t)|.
\end{align*}
Then, the desired result follows from the Cauchy-Schwarz inequality, (\ref{grad3}), (\ref{gradsec}), (\ref{malgrad1}), (\ref{1})-(\ref{3}) and (\ref{third1})-(\ref{third3}).
\qed

\section{Proofs of Lemmas in Subsection \ref{EMPROOF}}\label{stable application}

\subsection{Proof of Lemma \ref{EMmoment}}

Define $V(y):=(1+|y|^{2})^{1/2}$, then
\begin{gather*}
    \nabla V(y)=\frac{y}{V(y)},
    \quad \nabla^{2}V(y)=\frac{V(y)^2I_{d}-yy^{T}}
    {V(y)^3},
\end{gather*}
where $I_d$ denotes the $d \times d$ identity matrix. Hence, for any $y\in\mathbb{R}^{d}$
\begin{gather*}
    |\nabla V(y)|\leq1,\quad
    \|\nabla^{2}V(y)\|_{\mathrm{HS}}\leq 2\sqrt{d}.
\end{gather*}
Notice that $|y|\leq V(y)$ and by (\ref{density}), for any $k\geq 0$, we have
\begin{align*}
V(\tilde{Y}_{k+1})=&V\left(\tilde{Y}_{k}-\frac{\eta}{\alpha}\tilde{Y}_{k}+\frac{\eta^{\frac{1}{\alpha}}}{\sigma}\tilde{Z}_{k+1}\right)\\
=&V\left(\tilde{Y}_{k}-\frac{\eta}{\alpha}\tilde{Y}_{k}\right)+V\left(\tilde{Y}_{k}-\frac{\eta}{\alpha}\tilde{Y}_{k}+\frac{\eta^{\frac{1}{\alpha}}}{\sigma}\tilde{Z}_{k+1}\right)
-V\left(\tilde{Y}_{k}-\frac{\eta}{\alpha}\tilde{Y}_{k}\right)\\
=&V\left(\tilde{Y}_{k}\right)-\int_{0}^{\frac{\eta}{\alpha}}\langle\nabla V(\tilde{Y}_{k}-r\tilde{Y}_{k}),\tilde{Y}_{k}\rangle\dif r\\
&+\int_{0}^{\frac{\eta^{\frac{1}{\alpha}}}{\sigma}}\langle\nabla V\left(\tilde{Y}_{k}-\frac{\eta}{\alpha}\tilde{Y}_{k}+r\tilde{Z}_{k+1}\right),\tilde{Z}_{k+1}\rangle\dif r.
\end{align*}
Recall $\nabla V(y)=\frac{y}{V(y)}$, we have
\begin{align*}
-\int_{0}^{\frac{\eta}{\alpha}}\langle\nabla V(\tilde{Y}_{k}-r\tilde{Y}_{k}),\tilde{Y}_{k}\rangle\dif r
=&-\int_{0}^{\frac{\eta}{\alpha}}\frac{\langle\tilde{Y}_{k}-r\tilde{Y}_{k},\tilde{Y}_{k}\rangle}{V(\tilde{Y}_{k}-r\tilde{Y}_{k})}\dif r\\
\leq&-\int_{0}^{\frac{\eta}{\alpha}}\frac{(1-r)|\tilde{Y}_{k}|^{2}}{(|\tilde{Y}_{k}|^{2}+1)^{\frac{1}{2}}}\dif r\\
=&-\int_{0}^{\frac{\eta}{\alpha}}(1-r)V\left(\tilde{Y}_{k}\right)\dif r+\int_{0}^{\frac{\eta}{\alpha}}\frac{1-r}{(|\tilde{Y}_{k}^{2}+1)^{\frac{1}{2}}}\dif r\\
\leq&-\left(\frac{\eta}{\alpha}-\frac{\eta^{2}}{2\alpha^{2}}\right)V\left(\tilde{Y}_{k}\right)+\frac{\eta}{\alpha}\leq-\frac{\eta}{2\alpha}V\left(\tilde{Y}_{k}\right)+\frac{\eta}{\alpha},
\end{align*}
where the last inequality is by the fact $\eta\leq1<\alpha$. In addition, notice that $\tilde{Z}_{k+1}$ is independent of $\tilde{Y}_{k}$, conditioned on $\tilde{Y}_{k}=y$, we have
\begin{align*}
&\mathbb{E}\left[\langle\nabla V\left(\left(1-\frac{\eta}{\alpha}\right)y+r\tilde{Z}_{k+1}\right),\tilde{Z}_{k+1}\rangle\right]\\
=&\frac{\alpha}{V(\mathbb{S}^{d-1})}\int_{|z|\geq1}\frac{\langle\nabla V\left(\left(1-\frac{\eta}{\alpha}\right)y+rz\right),z\rangle-\langle\nabla V\left(\left(1-\frac{\eta}{\alpha}\right)y\right),z\rangle{\bf 1}_{(0,\eta^{-\frac{1}{\alpha}})}(|z|)}{|z|^{\alpha+d}}\dif z\\
=&\frac{\alpha}{V(\mathbb{S}^{d-1})}\int_{1\leq|z|\leq\eta^{-\frac{1}{\alpha}}}\int_{0}^{r}\frac{\langle\nabla^{2} V\left(\left(1-\frac{\eta}{\alpha}\right)y+sz\right),zz^{T}\rangle_{{\rm HS}}}{|z|^{\alpha+d}}\dif s\dif z\\
&+\frac{\alpha}{V(\mathbb{S}^{d-1})}\int_{|z|\geq\eta^{-\frac{1}{\alpha}}}\frac{\langle\nabla V\left(\left(1-\frac{\eta}{\alpha}\right)y+rz\right),z\rangle}{|z|^{\alpha+d}}\dif z,
\end{align*}
which implies
\begin{align*}
&\left|\mathbb{E}\left[\langle\nabla V\left(\left(1-\frac{\eta}{\alpha}\right)y+r\tilde{Z}_{k+1}\right),\tilde{Z}_{k+1}\rangle\right]\right|\\
\leq&\frac{\alpha}{V(\mathbb{S}^{d-1})}\mathbb{E}\left[\int_{|z|\leq\eta^{-\frac{1}{\alpha}}}\int_{0}^{r}\frac{2\sqrt{d}|z|^{2}}{|z|^{\alpha+d}}\dif z
+\int_{|z|\geq\eta^{-\frac{1}{\alpha}}}\frac{|z|}{|z|^{\alpha+d}}\dif z\right]
\leq\frac{2\alpha\sqrt{d}}{2-\alpha}r\eta^{\frac{\alpha-2}{\alpha}}+\frac{\alpha}{\alpha-1}\eta^{\frac{\alpha-1}{\alpha}}.
\end{align*}
Hence, we have
\begin{align*}
&\left|\mathbb{E}\left[\int_{0}^{\frac{\eta^{1/\alpha}}{\sigma}}\langle\nabla V\left(\tilde{Y}_{k}-\frac{\eta}{\alpha}\tilde{Y}_{k}+r\tilde{Z}_{k+1}\right),\tilde{Z}_{k+1}\rangle\dif r\right]\right|\\
\leq&\int_{0}^{\frac{\eta^{1/\alpha}}{\sigma}}\left(\frac{2\alpha\sqrt{d}}{2-\alpha}r\eta^{\frac{\alpha-2}{\alpha}}+\frac{\alpha}{\alpha-1}\eta^{\frac{\alpha-1}{\alpha}}\right)\dif r=\left(\frac{\alpha\sqrt{d}}{(2-\alpha)\sigma^{2}}+\frac{\alpha}{(\alpha-1)\sigma}\right)\eta.
\end{align*}

Therefore, we have
\begin{align*}
\mathbb{E}[V(\tilde{Y}_{k+1})]\leq\left(1-\frac{\eta}{2\alpha}\right)\mathbb{E}[V(\tilde{Y}_{k})]
+\left(\frac{\alpha\sqrt{d}}{(2-\alpha)\sigma^{2}}+\frac{\alpha}{(\alpha-1)\sigma}+\frac{1}{\alpha}\right)\eta,
\end{align*}
which immediately implies
\begin{align*}
\mathbb{E}[V(\tilde{Y}_{k+1})]\leq&\left(1-\frac{\eta}{2\alpha}\right)^{k+1}|x|
+\left(\frac{\alpha\sqrt{d}}{(2-\alpha)\sigma^{2}}+\frac{\alpha}{(\alpha-1)\sigma}+\frac{1}{\alpha}\right)\eta\sum_{j=0}^{k}\left(1-\frac{\eta}{2\alpha}\right)^{j}\\
\leq&|x|+\frac{2\alpha^{2}\sqrt{d}}{(2-\alpha)\sigma^{2}}+\frac{2\alpha^{2}}{(\alpha-1)\sigma}+2.
\end{align*}
Hence, we have
\begin{align*}
\mathbb{E}|\tilde{Y}_{k}^{x}|\leq\mathbb{E}[V(\tilde{Y}_{k}^{x})]\leq C_{\alpha,d}(1+|x|).
\end{align*}
The proof is complete.
\qed

\subsection{Proof of Lemma \ref{SDEM}}
Let $\mathcal{L}^{\alpha}$ be the generator that corresponds to the process $\tilde{X}_{t}.$ Then, it is easy to check that for any $f\in\mathcal{C}_{b}^{2}(\mathbb{R}^{d},\mathbb{R}^{d}),$
\begin{align*}
\mathcal{L}^{\alpha}f(x)=-\frac{1}{\alpha}\langle x,\nabla f(x)\rangle+\Delta^{\alpha/2}f(x).
\end{align*}
Following \cite[Section 2]{Wan13}, we know that the extended domain of the operator $\mathcal{L}^{\alpha}$ is given as follows:
\begin{align*}
\mathcal{D}(\mathcal{L}^{\alpha}):\big\{f\in\mathcal{C}^{2}(\mathbb{R}^{d},\mathbb{R}^{d}):\int_{|z|\geq1}\frac{f(x+z)-f(x)}{|z|^{\alpha+d}}dz<\infty\big\}.
\end{align*}
Taking a function $V\in\mathcal{C}^{2}(\mathbb{R}^{d},\mathbb{R}^{d})$ such that $V\geq1$, for $|x|\leq1,$ $V(x)$ is bounded and for $|x|>1,$ $V(x)=|x|.$ It is easy to check that $V\in\mathcal{D}(\mathcal{L}^{\alpha}),$ and $\mathcal{L}^{\alpha}V$ is a well defined locally measurable function.
Then, there exist $c_{2},c_{3}>0$ and a compact set $A$ such that for all $x\in\mathbb{R}^{d},$
\begin{align*}
\mathcal{L}^{\alpha}V(x)\leq-c_{2}V(x)+c_{3}{\bf 1}_{A}(x).
\end{align*}
This along with \cite[Theorem 6.1]{MT93} yields that the process $(\tilde{X}_{t}^{x})_{t\geq0}$ is exponential ergodic, i.e., there exists a unique invariant probability $\mu$ such that for all $x\in\mathbb{R}^{d}$ and $t>0,$
\begin{align*}
\sup_{|f|\leq V+1}\big|\mathbb{E}[f(X_{t}^{x})]-\mu(f)\big|\leq C_{\alpha,d}(1+V(x))e^{-c_{4}t}
\end{align*}
for some constant $c_{4}>0$ and $\mu(V)<\infty$ (here $\mu(V)=\mathbb{E}[V(Z_{1})]$, see, e.g., \cite{CNXY19}). These further imply
\begin{align*}
\mathbb{E}|\tilde{X}_{t}^{x}|\leq C_{\alpha,d}(1+|x|).
\end{align*}

Recall $\tilde{X}_{t}^{x}=x-\frac{1}{\alpha}\int_{0}^{t}\tilde{X}_{r}^{x}dr+Z_{t},$ by (\ref{SDEmoment1}), we further have
\begin{align*}
\mathbb{E}|X_{t}^{x}-x|\leq&\frac{1}{\alpha}\int_{0}^{t}\mathbb{E}|\tilde{X}_{r}^{x}|dr+\mathbb{E}|Z_{t}|\leq C_{\alpha,d}(1+|x|)(t+t^{\frac{1}{\alpha}}).
\end{align*}
\qed

\subsection{Proof of Lemma \ref{compare}}

Recall (\ref{SDE}) and (\ref{E-M}), we have
\begin{align*}
\mathbb{E}[f(\tilde{X}_{\eta}^{x})-f(\tilde{Y}_{1})]=\mathbb{E}\big[f\big(x-\frac{1}{\alpha}\int_{0}^{\eta}\tilde{X}_{r}^{x}dr+Z_{\eta}\big)-f\big(x-\frac{\eta}{\alpha}x+\frac{\eta^{\frac{1}{\alpha}}}{\sigma}\tilde{Z}\big)\big]:=\mathcal{J}_{1}+\mathcal{J}_{2},
\end{align*}
where
\begin{align*}
\mathcal{J}_{1}:=\mathbb{E}\big[f\big(x-\frac{1}{\alpha}\int_{0}^{\eta}\tilde{X}_{r}^{x}dr+Z_{\eta}\big)-f\big(x-\frac{\eta}{\alpha}x+Z_{\eta}\big)\big],
\end{align*}
\begin{align*}
\mathcal{J}_{2}:=\mathbb{E}\big[f\big(x-\frac{\eta}{\alpha}x+Z_{\eta}\big)-f\big(x-\frac{\eta}{\alpha}x\big)\big]-\mathbb{E}\big[f\big(x-\frac{\eta}{\alpha}x+\frac{\eta^{\frac{1}{\alpha}}}{\sigma}\tilde{Z}\big)-f\big(x-\frac{\eta}{\alpha}x\big)\big].
\end{align*}
For $\mathcal{J}_{1},$ by Lemma \ref{SDEM}, we have
\begin{align*}
|\mathcal{J}_{1}|\leq\frac{\|\nabla f\|}{\alpha}\int_{0}^{\eta}\mathbb{E}|\tilde{X}_{r}^{x}-x|\dif r\leq C_{\alpha,d}(1+|x|)\|\nabla f\|\int_{0}^{\eta}r^{\frac{1}{\alpha}}dr\leq C_{\alpha,d}(1+|x|)\|\nabla f\|\eta^{\frac{1}{\alpha}}.
\end{align*}
For $\mathcal{J}_{2},$ by It$\hat{o}$'s formula, we have
\begin{align*}
\mathbb{E}\big[f\big(x+\eta b(x)+Z_{\eta}\big)-f\big(x+\eta b(x)\big)\big]=\int_{0}^{\eta}\mathbb{E}\big[\Delta^{\alpha/2}f\big(x+\eta b(x)+Z_{r}\big)\big]dr,
\end{align*}
and noting that $d_{\alpha}=\frac{\alpha}{V(\mathbb{S}^{d-1})\sigma^{\alpha}},$ by Taylor expansion, we have
\begin{align*}
&\mathbb{E}\big[f\big(x-\frac{\eta}{\alpha}x+\frac{\eta^{\frac{1}{\alpha}}}{\sigma}\tilde{Z}\big)-f\big(x-\frac{\eta}{\alpha}x\big)\big]\\
=&\frac{\eta^{\frac{1}{\alpha}}}{\sigma}\mathbb{E}\big[\int_{0}^{1}\langle\nabla f\big(x-\frac{\eta}{\alpha}x+\frac{\eta^{\frac{1}{\alpha}}}{\sigma}t\tilde{Z}\big),\tilde{Z}\rangle dt\big]\\
=&\frac{\eta^{\frac{1}{\alpha}}}{\sigma}\int_{|z|\geq1}\int_{0}^{1}\frac{\alpha \langle\nabla f\big(x-\frac{\eta}{\alpha}x+\frac{\eta^{\frac{1}{\alpha}}}{\sigma}tz\big),z\rangle}{V(\mathbb{S}^{d-1})|z|^{\alpha+d}}dtdz\\
=&\frac{\alpha\eta}{V(\mathbb{S}^{d-1})\sigma^{\alpha}}\int_{|z|\geq\frac{\eta^{\frac{1}{\alpha}}}{\sigma}}\int_{0}^{1}\frac{\langle f\big(x-\frac{\eta}{\alpha}x+tz\big),z\rangle}{|z|^{\alpha+d}}dtdz
=\eta\Delta^{\alpha/2}f(x-\frac{\eta}{\alpha}x)-\mathcal{R}
\end{align*}
with
\begin{align*}
\mathcal{R}=\eta d_{\alpha}\int_{|z|<\frac{\eta^{\frac{1}{\alpha}}}{\sigma}}\int_{0}^{1}\frac{\langle\nabla f\big(x-\frac{\eta}{\alpha}x+tz\big),z\rangle}{|z|^{\alpha+d}}dtdz.
\end{align*}
These imply
\begin{align*}
|\mathcal{J}_{2}|\leq|\mathcal{R}|+\big|\int_{0}^{\eta}\mathbb{E}\big[\Delta^{\alpha/2}f\big(x-\frac{\eta}{\alpha}x+Z_{r}\big)\big]dr-\eta\Delta^{\alpha/2}f\big(x-\frac{\eta}{\alpha}x\big)\big|.
\end{align*}
It is easy to check that
\begin{align*}
|\mathcal{R}|=&\eta d_{\alpha}\big|\int_{|z|<\frac{\eta^{\frac{1}{\alpha}}}{\sigma}}\int_{0}^{1}\frac{\langle\nabla f\big(x-\frac{\eta}{\alpha}x+tz\big)-\nabla f\big(x-\frac{\eta}{\alpha}x\big),z\rangle}{|z|^{\alpha+d}}dtdz\big|\\
\leq&\eta d_{\alpha}\int_{|z|<\frac{\eta^{\frac{1}{\alpha}}}{\sigma}}\int_{0}^{1}\frac{\big|\nabla f\big(x-\frac{\eta}{\alpha}x+tz\big)-\nabla f\big(x-\frac{\eta}{\alpha}x\big)\big|}{|z|^{\alpha+d-1}}dtdz\\
\leq&C_{\alpha,d}\eta\|\nabla^{2}f\|_{{\rm HS}}\int_{|z|<\frac{\eta^{\frac{1}{\alpha}}}{\sigma}}\frac{1}{|z|^{\alpha+d-2}}dz\leq C_{\alpha,d}\|\nabla^{2}f\|_{{\rm HS}}\eta^{\frac{2}{\alpha}}.
\end{align*}
By \cite[(2.23)]{CNXY19}, that is,
\begin{equation*}
\left|
(\Delta^{\frac{\alpha}{2}}f)(x) - (\Delta^{\frac{\alpha}{2}}f)(y)
\right|\leq
\frac{2d_{\alpha}V(\mathbb{S}^{d-1})\|\nabla^{2}f\|_{{\rm
HS}}}{\alpha(2-\alpha)(\alpha-1)}|x-y|^{2-\alpha},
\end{equation*}
we further have
\begin{align*}
&\big|\int_{0}^{\eta}\mathbb{E}\big[\Delta^{\alpha/2}f\big(x-\frac{\eta}{\alpha}x+Z_{r}\big)\big]dr-\eta\Delta^{\alpha/2}f\big(x-\frac{\eta}{\alpha}x\big)\big|\\
\leq&\int_{0}^{\eta}\mathbb{E}\big|\Delta^{\alpha/2}f\big(x-\frac{\eta}{\alpha}x+Z_{r}\big)\big]-\eta\Delta^{\alpha/2}f\big(x-\frac{\eta}{\alpha}x\big)\big|dr\\
\leq&C_{\alpha,d}\|\nabla^{2}f\|_{{\rm HS}}\int_{0}^{\eta}\mathbb{E}|Z_{r}|^{2-\alpha}dr
=C_{\alpha,d}\|\nabla^{2}f\|_{{\rm HS}}\int_{0}^{\eta}\mathbb{E}|Z_{1}|^{2-\alpha}r^{\frac{2-\alpha}{\alpha}}dr\leq C_{\alpha,d}\|\nabla^{2}f\|_{{\rm HS}}\eta^{\frac{2}{\alpha}}.
\end{align*}
Therefore, we have
\begin{align*}
\big|\mathbb{E}[f(\tilde{X}_{\eta}^{x})-f(\tilde{Y}_{1})]\big|\leq C_{\alpha,d}(1+|x|)(\|\nabla f\|+\|\nabla^{2}f\|_{{\rm HS}})\eta^{\frac{2}{\alpha}}.
\end{align*}
\qed

\subsection{Proof of Lemma \ref{Qregular}}

Let $p(t,x)$ be the transition probability density of rotationally symmetric $\alpha$-stable process
$(Z_{t})_{t\geq0}$ and the following heat kernel estimates is well known (see, e.g., \cite[Lemma 5]{BJ07}), that is,
\begin{align}\label{bounds}
|\nabla p(t,x)|\leq C_{\alpha,d}t^{-\frac{1}{\alpha}}\frac{t}{(t^{\frac{1}{\alpha}}+|x|)^{\alpha+d}}.
\end{align}

Recall the SDE (\ref{SDE}),
\begin{align*}
d\tilde{X}_{t}=-\frac{1}{\alpha}\tilde{X}_{t}dt+dZ_{t},\quad X_{0}=x\in\mathbb{R}^{d}.
\end{align*}
Such an equation can be solved explicitly
\begin{align}\label{solutionstable}
\tilde{X}_{t}^{x}=xe^{-\frac{t}{\alpha}}+\int_{0}^{t}e^{-\frac{t-s}{\alpha}}\dif Z_{s},
\end{align}
see \cite[p.105]{Sat99}. It follows from (\ref{solutionstable}) that the density of $(Q_{t})_{t\geq0}$ is given by
\begin{align*}
q(t,x,y)=p(1-e^{-t},y-e^{-\frac{t}{\alpha}}x),
\end{align*}
which further implies that for any $h\in{\rm Lip}(1)$ and $x\in\mathbb{R}^{d}$, we have
\begin{align*}
Q_{t}h(x)=\int_{\mathbb{R}^{d}}p(1-e^{-t},y-e^{-\frac{t}{\alpha}}x)h(y)\dif y.
\end{align*}

Now, we are at the position to prove the Lemma \ref{Qregular}.\\
{\bf {\it Proof of Lemma \ref{Qregular}.}}
For any $x\in\mathbb{R}^{d}$ and $t>0$, by integration by parts, we have
\begin{align*}
\nabla(Q_{t}h)(x)=&\int_{\mathbb{R}^{d}}\nabla^{x}p(1-e^{-t},y-e^{-\frac{t}{\alpha}}x)h(y)\dif y\\
=&-e^{-\frac{t}{\alpha}}\int_{\mathbb{R}^{d}}\nabla^{y}p(1-e^{-t},y-e^{-\frac{t}{\alpha}}x)h(y)\dif y\\
=&e^{-\frac{t}{\alpha}}\int_{\mathbb{R}^{d}}p(1-e^{-t},y-e^{-\frac{t}{\alpha}}x)\nabla h(y)\dif y,
\end{align*}
where $\nabla^{x}$ means that the gradient operator acts on $x$. Then, we have
\begin{align*}
|\nabla(Q_{t}h)(x)|\leq\|\nabla h\|e^{-\frac{t}{\alpha}}\int_{\mathbb{R}^{d}}p(1-e^{-t},y-e^{-\frac{t}{\alpha}}x)\dif y= \|\nabla h\|e^{-\frac{t}{\alpha}}.
\end{align*}
Furthermore, by (\ref{bounds}), we have
\begin{align*}
\|\nabla(Q_{t}h)^{2}(x)\|_{{\rm{HS}}}\leq&\int_{\mathbb{R}^{d}}e^{-\frac{2t}{\alpha}}|\nabla^{y}p(1-e^{-t},y-e^{-\frac{t}{\alpha}}x)||\nabla h(y)|\dif y\\
\leq&C_{\alpha,d}\|\nabla h\|(1-e^{-t})^{-\frac{1}{\alpha}}e^{-\frac{2t}{\alpha}}\int_{\mathbb{R}^{d}}\frac{1-e^{-t}}{\big((1-e^{-t})^{\frac{1}{\alpha}}+|y-e^{-\frac{t}{\alpha}}x|\big)^{\alpha+d}}\dif y\\
\leq&C_{\alpha,d}\|\nabla h\|(1-e^{-t})^{-\frac{1}{\alpha}}e^{-\frac{2t}{\alpha}}\\
=&C_{\alpha,d}\|\nabla h\|(e^{t}-1)^{-\frac{1}{\alpha}}e^{-\frac{t}{\alpha}}\leq C_{\alpha,d}\|\nabla h\| t^{-\frac{1}{\alpha}}e^{-\frac{t}{\alpha}}.
\end{align*}
\qed

\section{Proof of Lemma \ref{normal}}\label{normal spplication}

In this section, we use the semigroup of $(B_{t}^{x})_{t\geq0}$ and the formula of integration by parts to prove Lemma
\ref{normal}.

{\bf Proof of Lemma \ref{normal}.}
Recall
\begin{align*}
P_{t}h(x)=\mathbb{E}h(B_{t}^{x})=\frac{1}{(2\pi
t)^{\frac{d}{2}}}\int_{\mathbb{R}^{d}}e^{-\frac{|y-x|^{2}}{2t}}h(y)dy.
\end{align*}
For any $v,x_{1},x_{2}\in\mathbb{R}^{d}$ and $f\in\mathcal{C}^{1}(\mathbb{R}^{d}\times\mathbb{R}^{d},\mathbb{R}),$
denote the directional derivative of $f(x_{1},x_{2})$ with respect to $x_{i}$ by
$\nabla_{v}^{x_{i}}f(x_{1},x_{2}),$ $i=1,2,$ respectively. Then, we have
\begin{align}\label{condition1}
\nabla_{v_{1}}P_{t}h(x)=&\frac{1}{(2\pi
t)^{\frac{d}{2}}}\int_{\mathbb{R}^{d}}\nabla_{v_{1}}^{x}e^{-\frac{|y-x|^{2}}{2t}}h(y)dy
=\frac{1}{(2\pi t)^{\frac{d}{2}}}\int_{\mathbb{R}^{d}}e^{-\frac{|y-x|^{2}}{2t}}\langle\frac{y-x}{t},v_{1}\rangle
h(y)dy,
\end{align}
which implies
\begin{align}\label{condition2}
\nabla_{v_{2}}\nabla_{v_{1}}P_{t}h(x)=&\frac{1}{(2\pi
t)^{\frac{d}{2}}}\int_{\mathbb{R}^{d}}\nabla_{v_{2}}^{x}(e^{-\frac{|y-x|^{2}}{2t}}\langle\frac{y-x}{t},v_{1}\rangle)h(y)dy\nonumber\\
=&\frac{1}{(2\pi
t)^{\frac{d}{2}}}\int_{\mathbb{R}^{d}}e^{-\frac{|y-x|^{2}}{2t}}\big(\langle\frac{y-x}{t},v_{2}\rangle\langle\frac{y-x}{t},v_{1}\rangle-\frac{1}{t}\langle
v_{2},v_{1}\rangle\big)h(y)dy.
\end{align}
Hence, by integration by parts, we have
\begin{align*}
\nabla_{v_{3}}\nabla_{v_{2}}\nabla_{v_{1}}P_{t}h(x)=&\frac{1}{(2\pi
t)^{\frac{d}{2}}}\int_{\mathbb{R}^{d}}\nabla_{v_{3}}^{x}\big[e^{-\frac{|y-x|^{2}}{2t}}\big(\langle\frac{y-x}{t},v_{2}\rangle\langle\frac{y-x}{t},v_{1}\rangle-\frac{1}{t}\langle
v_{2},v_{1}\rangle\big)\big]h(y)dy\\
=&\frac{-1}{(2\pi
t)^{\frac{d}{2}}}\int_{\mathbb{R}^{d}}\nabla_{v_{3}}^{y}\big[e^{-\frac{|y-x|^{2}}{2t}}\big(\langle\frac{y-x}{t},v_{2}\rangle\langle\frac{y-x}{t},v_{1}\rangle-\frac{1}{t}\langle
v_{2},v_{1}\rangle\big)\big]h(y)dy\\
=&\frac{1}{(2\pi
t)^{\frac{d}{2}}}\int_{\mathbb{R}^{d}}e^{-\frac{|y-x|^{2}}{2t}}\big(\langle\frac{y-x}{t},v_{2}\rangle\langle\frac{y-x}{t},v_{1}\rangle-\frac{1}{t}\langle
v_{2},v_{1}\rangle\big)\nabla_{v_{3}}h(y)dy\\
=&\mathbb{E}\Big[\big(\langle\frac{B_{t}}{t},v_{2}\rangle\langle\frac{B_{t}}{t},v_{1}\rangle-\frac{1}{t}\langle
v_{2},v_{1}\rangle\big)\langle\nabla h(B_{t}^{x}),v_{3}\rangle\Big],
\end{align*}
then, by Cauchy-Schwarz inequality, we have
\begin{align*}
|\nabla_{v_{3}}\nabla_{v_{2}}\nabla_{v_{1}}P_{t}h(x)|\leq&\|\nabla h\||v_{3}|\Big[\frac{1}{t^{2}}\mathbb{E}|\langle
B_{t},v_{2}\rangle\langle B_{t},v_{1}\rangle|+\frac{1}{t}|\langle v_{2},v_{1}\rangle|\Big]\\
\leq&|v_{3}|\big[\frac{1}{t}|v_{1}||v_{2}|+\frac{1}{t}|v_{2}||v_{1}|\big]=\frac{2}{t}|v_{1}||v_{2}||v_{3}|,
\end{align*}
(\ref{Bp3}) is proved.

Next, noticing that
	\begin{align*}
	\Delta P_{t}h(x)=\langle\nabla^{2}P_{t}h(x),I_{d}\rangle
	\end{align*}
and $I_{d}=\mathbb{E}[WW^{T}]$ with $W\sim N(0,I_{d})$, it follows from (\ref{Bp3}) that
	\begin{align*}	
\big|\Delta(P_{t}h)(x+v)-\Delta(P_{t}h)(x)\big|=&\big|\langle\nabla^{2}(P_{t}h)(x+v)-\nabla^{2}(P_{t}h)(x),I_{d}\rangle_{\rm{HS}}\big|\leq\frac{2d}{t}|v|.
	\end{align*}
\qed
\end{appendix}

\begin{acks}[Acknowledgments]
The authors would like to gratefully thank Jim Dai for very helpful discussions on probability approximations. We are grateful to the referees whose constructive comments and suggestions have helped to greatly improve the quality of this paper.
\end{acks}

\begin{funding}
Shao Q.M. is partially supported by National Nature Science Foundation
of China NSFC 12031005, Shenzhen Outstanding Talents Training Fund. Xu L. is partially supported by NSFC No. 12071499, Macao S.A.R grant FDCT  0090/2019/A2 and University of Macau grant  MYRG2018-00133-FST.
\end{funding}




\end{document}